\renewcommand{\PrintDOI}[1]{\doi{#1}}
\newtheorem{theorem}{Theorem}
\newtheorem{lemma}[theorem]{Lemma}
\newtheorem{proposition}[theorem]{Proposition}
\newtheorem{fact}[theorem]{Fact}
\theoremstyle{definition}
\newtheorem{definition}[theorem]{Definition}
\newtheorem{example}[theorem]{Example}
\let\polishlcross=\l
\def\l{\ifmmode\ell\else\polishlcross\fi}
\def\moverlay{\mathpalette\mov@rlay}
\def\mov@rlay#1#2{\leavevmode\vtop{    \baselineskip\z@skip\lineskiplimit-\maxdimen%
    \ialign{\hfil$\m@th#1##$\hfil\cr#2\crcr}}}
\newcommand{\charfusion}[3][\mathord]{
    #1{\ifx#1\mathop\vphantom{#2}\fi
        \mathpalette\mov@rlay{#2\cr#3}
      }
    \ifx#1\mathop\expandafter\displaylimits\fi}
\DeclareFontFamily{U}  {MnSymbolC}{}
\DeclareSymbolFont{MnSyC}         {U}  {MnSymbolC}{m}{n}
\DeclareFontShape{U}{MnSymbolC}{m}{n}{%
    <-6>  MnSymbolC5
   <6-7>  MnSymbolC6
   <7-8>  MnSymbolC7
   <8-9>  MnSymbolC8
   <9-10> MnSymbolC9
  <10-12> MnSymbolC10
  <12->   MnSymbolC12}{}
\DeclareMathSymbol{\powerset}{\mathord}{MnSyC}{180}
\def\namedlabel#1#2{\begingroup
    #2%
    \def\@currentlabel{#2}%
    \phantomsection\label{#1}\endgroup
}
\numberwithin{theorem}{section}
\setlist[itemize]{leftmargin=1cm}
\setlist[enumerate]{leftmargin=1cm}
\renewcommand{\leq}{\leqslant}
\renewcommand{\geq}{\geqslant}
\let\epsilon\varepsilon%
\definecolor{green1}{rgb}{0.0, 0.5, 0.0}
\def\cC{\mathcal{C}}
\def\cI{\mathcal{I}}
\def\1{\mathbbm{1}}
\def\<{\langle}
\def\>{\rangle}
\let\theta=\vartheta%
\let\rho=\varrho%
\let\phi=\varphi%
\DeclareMathOperator{\smp}{smp}
 \newcommand*\patchAmsMathEnvironmentForLineno[1]{
 \expandafter\let\csname old#1\expandafter\endcsname\csname #1\endcsname
 \expandafter\let\csname oldend#1\expandafter\endcsname\csname end#1\endcsname
 \renewenvironment{#1}
 {\linenomath\csname old#1\endcsname}
 {\csname oldend#1\endcsname\endlinenomath}}
 \newcommand*\patchBothAmsMathEnvironmentsForLineno[1]{
 \patchAmsMathEnvironmentForLineno{#1}
 \patchAmsMathEnvironmentForLineno{#1*}}
\begin{document}
\onehalfspace%
\shortdate%
\yyyymmdddate%
\settimeformat{ampmtime}
\footskip=28pt

\title{On the Ramsey numbers of daisies II}

\author{Marcelo Sales}

\thanks{The author was supported by NSF grant DMS 1764385 and US Air Force grant FA9550-23-1-0298.}

\address{Department of Mathematics, University of California, 
    Irvine, CA, USA}
\email{mtsales@uci.edu}

\begin{abstract}
A $(k+r)$-uniform hypergraph $H$ on $(k+m)$ vertices is an $(r,m,k)$-daisy if there exists a partition of the vertices $V(H)=K\cup M$ with $|K|=k$, $|M|=m$ such that the set of edges of $H$ is all the $(k+r)$-tuples $K\cup P$, where $P$ is an $r$-tuple of $M$. Complementing results in \cite{PRS}, we obtain an $(r-2)$-iterated exponential lower bound to the Ramsey number of an $(r,m,k)$-daisy for $2$-colors. This matches the order of magnitude of the best lower bounds for the Ramsey number of a complete $r$-graph.
\end{abstract}

\maketitle

\section{Introduction}

For a natural number $N$, we set $[N]=\{1,\ldots,N\}$. Given a set $X$, we denote by $X^{(r)}$ the set of $r$-tuples of $X$. For two sets $X$, $Y$ we say that $X<Y$ if $\max(X) < \min(Y)$. Unless stated otherwise, the elements of a set $X$ will be always displayed in increasing order. That is, if $X=\{x_1,\ldots,x_t\}$, then $x_1<\ldots<x_t$.

A $(k+r)$-uniform hypergraph $H$ on $k+m$ vertices is an \textit{$(r,m,k)$-daisy} if there exists a partition of the vertices $V(H)=K\cup M$ with $|K|=k$ and $|M|=m$ such that
\begin{align*}
    H=\{K\cup P: P \in M^{(r)}\}
\end{align*}
We say that the set $K$ is the kernel of $H$, the elements of $M^{(r)}$ are the petals of $H$ and $M$ is the universe of petals. We will often refer to an edge of $H$ by $X$ and its correspondent petal by $P$.

Daisies were first introduced by Bollob\'{a}s, Leader and Malvenuto in \cite{BLM11}. They were interested in Tur\'{a}n-type questions related to $(r,m,k)$-daisies, i.e., the maximum number of edges that an $(r+k)$-graph has with no copy of an $(r,m,k)$-daisy. In this paper we will study the Ramsey number $D_r(m,k)$ of an $(r,m,k)$-daisy. The number $D_r(m,k)$ is defined
as the minimum integer $N$ such that any $2$-coloring of the complete hypergraph $[N]^{(k+r)}$ contains a monochromatic $(r,m,k)$-daisy.

Those numbers were already studied in \cite{PRS}. Although the main focus of their paper is on daisies with kernel of non fixed size, they noted that
\begin{align}\label{eq:eq1}
R_{r-k}(\lceil m/(k+1) \rceil-k)\leq D_r(m,k)\leq R_r(m)+k,
\end{align}
where $R_r(m)$ is the Ramsey number of the complete graph $K_m^{(r)}$, i.e., the minimum integer $N$ such that any $2$-coloring of $[N]^{(r)}$ contains a monochromatic set $X$ of size $m$. 

A natural question raised in \cite{PRS} is whether $D_r(m,k)$ behaves similarly $R_r(m)$. Erd\H{o}s, Hajnal and Rado (see \cites{EHR65, GRS13}) and Conlon, Fox and Sudakov \cite{CFS13} showed that there exists absolute constants $c_1,c_2$ such that for sufficiently large $m$,
\begin{align}\label{eq:ramsey}
t_{r-2}(c_1m^2)\leq R_r(m)\leq t_{r-1}(c_2m),
\end{align}
where $t_i(x)$ is the tower function defined by $t_0(x)=x$ and $t_{i+1}(x)=2^{t_i(x)}$. In this paper we provide for $k\geq 1$ a lower bound of $D_r(m,k)$ in the same order of magnitude as the best current bounds of the Ramsey number $R_r(m)$ for sufficiently large $m$. We remark here that for $k=0$, the problem is equivalent to the Ramsey number, since an $(r,m,0)$-daisy is just the complete graph $K_m^{(r)}$.

\begin{theorem}\label{th:main}
Let $r\geq 3$ and $k\geq 1$ be integers. There exist integer $m_0=m_0(r,k)$ and absolute constant $c$ such that 
\begin{align*}
    D_r(m,k)\geq t_{r-2}(ck^{-2}m^{2^{4-r}})
\end{align*}
holds for $m\geq m_0$.
\end{theorem}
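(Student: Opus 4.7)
The plan is induction on $r$: a probabilistic base case at $r=3$ plus a stepping-up lemma that raises the tower height by one at the cost of replacing $m$ by $Cm^2$. Iterating the stepping-up $r-3$ times starting from the base case produces the exponent $2^{4-r}$ of $m$ and the tower height $r-2$.

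For the base case $r=3$ I would take a uniformly random $2$-colouring of $[N]^{(k+3)}$ with $N = \lfloor 2^{c k^{-2} m^2}\rfloor$ for a small absolute constant $c$. The number of $(3,m,k)$-daisies in $[N]$ is at most $\binom{N}{k}\binom{N-k}{m}\leq N^{k+m}$, and each has $\binom{m}{3}$ edges, so the expected number of monochromatic ones is at most $2 N^{k+m} 2^{-\binom{m}{3}}$. For $m\geq m_0(k)$ sufficiently large this is less than $1$, yielding $D_3(m,k) > N = t_1(ck^{-2}m^{2})$, as required for $r=3$.

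For the inductive step I would prove a stepping-up lemma: \emph{if $D_{r-1}(m,k) > N$, then $D_r(Cm^2,k) > 2^N$} for some absolute constant $C$. Identify $[2^N]$ with $\{0,1\}^N$ via binary expansion and, for distinct $a,b$, let $\delta(a,b) = \max\{i : a_i \neq b_i\}$. Given a $2$-colouring $\chi$ of $[N]^{(k+r-1)}$ witnessing $D_{r-1}(m,k) > N$, define $\chi' \colon [2^N]^{(k+r)} \to \{0,1\}$ as follows. For a sorted tuple $T = (a^{(1)},\ldots,a^{(k+r)})$, form the sequence $\delta_i = \delta(a^{(i)},a^{(i+1)})$; if this sequence is \emph{regular} in a prescribed sense (monotone or following a fixed local pattern) set $\chi'(T) = \chi(S(T))$, where $S(T) \subset [N]$ is a canonical $(k+r-1)$-subset read off from the $\delta$-values; otherwise assign a colour encoding the type of irregularity. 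To derive a contradiction from the existence of a monochromatic $(r,Cm^2,k)$-daisy in $\chi'$, with kernel $K'\subset[2^N]$ and universe $M'$ of size $Cm^2$, I would apply an Erdős--Szekeres-type extraction inside $M'$ to find a sub-universe $M''\subset M'$ of size $\geq m$ on which every $\delta$-sequence of $K'\cup P$ (for $P\in (M'')^{(r)}$) is regular and shares a common ``kernel pattern''. On $M''$, $\chi'(K'\cup P) = \chi(S(K'\cup P))$, and by construction each such $S$-set has the form $K\cup Q(P)$ for a fixed $k$-set $K\subset[N]$ and varying $(r-1)$-sets $Q(P)$ ranging over all $(r-1)$-subsets of some $m$-set $M\subset[N]\setminus K$—that is, a monochromatic $(r-1,m,k)$-daisy in $\chi$, contradicting the inductive hypothesis.

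The main obstacle is the design of the canonical subset map $S$. In a daisy the kernel is not a positional property of the edges (any $k$ of the $k+r$ elements of $T$ could \emph{a priori} play that role), so $S$ must be chosen so that the kernel of a daisy in $\chi'$ canonically induces a kernel of a daisy in $\chi$. I would arrange this by defining $S(T)$ through only those $\delta$-positions that are invariant under the daisy's petal-kernel decomposition; the Erdős--Szekeres extraction of a monotone subsequence of length $m$ from a sequence of length $(m-1)^2+1$ then accounts for the square-root loss $Cm^2\mapsto m$ in the stepping-up. The factor $k^{-2}$ in the final bound is a technical safety margin absorbed by the base case to keep the constants consistent across the iteration.
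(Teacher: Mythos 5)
Your high-level strategy (base case via a probabilistic bound at $r=3$, then a stepping-up lemma on $\delta$-sequences with an Erd\H{o}s--Szekeres extraction) is the same as the paper's, but the crucial technical step is left as a black box and there is a real gap there. You identify the obstruction yourself: ``the kernel is not a positional property of the edges,'' so the map $S$ must extract from the $\delta$-sequence a part that is constant as the petal varies. Resolving this is almost the entirety of the paper. Even after reducing to \emph{simple} daisies $K_0 < M < K_1$ (which you do not do; this alone already costs the $1/(k+1)$ factor of Proposition~\ref{prop:pigeonhole}), there remain $k+1$ incompatible positional configurations $(|K_0|,|K_1|)$ to be handled by a single colouring, and the $\delta$-value at each kernel--petal boundary \emph{does} depend on the petal unless the petal universe is first ``pre-processed'' into a closed interval or into the teeth of a maximal comb (Lemma~\ref{lem:preprocessing}). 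The auxiliary tree / comb / coloring-data machinery of Sections~\ref{sec:auxtrees}--\ref{sec:preprocessing} is exactly what makes your sentence ``define $S(T)$ through only those $\delta$-positions invariant under the petal--kernel decomposition'' into a construction; plain Erd\H{o}s--Szekeres extraction of a monotone $\delta$-run does not do this, since it says nothing about how the kernel elements sit in the tree relative to the sub-universe.

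There is also a concrete error in the statement of your proposed stepping-up: $D_{r-1}(m,k)>N \Rightarrow D_r(Cm^2,k)>2^N$ assumes the kernel size stays $k$ after projection, but the $\delta$-image of a $(k+r)$-edge has only $k+r-1$ points and the number of ``fixed'' ones (the kernel downstairs) depends on the comb type, ranging over all values from $0$ to $k$ (this is precisely why the paper defines $N$ as $\min_{0\le j\le k} D_{r-1}^{\smp}(\cdot,j)$ and needs a whole family $\{\phi_i\}_{r-1\le i\le k+r-1}$ of colourings in Theorem~\ref{th:daisystepup}). Likewise the $k$-dependence in the final bound does not come only from the base case as you assert: each application of the pre-processing lemma costs a $k^{-1/2}$ multiplicative loss, and these accumulate to the $k^{2^{4-r}-2}$ factor of Theorem~\ref{thm:daisyramsey}. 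As written, then, the proposal reproduces the frame of the argument but leaves the defining colouring, the pre-processing step, and the correct bookkeeping of kernel sizes unconstructed; these are not details but the substance of the proof.
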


In order to prove Theorem \ref{th:main} we will actually study the Ramsey number of a subfamily of daisies. We say that a hypergraph $H$ is a \textit{simple $(r,m,k)$-daisy} if $H$ is an $(r,m,k)$-daisy and its kernel $K$ can be partitioned into $K=K_0\cup K_1$ such that $K_0<M<K_1$. We define the Ramsey number of simple $(r,m,k)$-daisies $D_r^{\smp}(m,k)$ as the minimum integer $N$ such that any $2$-coloring of the complete hypergraph $[N]^{(k+r)}$ yields a monochromatic copy of a simple $(r,m,k)$-daisy.

In \cite{PRS}, the authors observed that the Ramsey number of daisies can be bounded from below by the Ramsey number of simple daisies.

\begin{proposition}[\cite{PRS}, Proposition 5.3]\label{prop:pigeonhole}
$D_r(m,k)\geq D_r^{\smp}\left(\lceil m/(k+1) \rceil, k\right)$.
\end{proposition}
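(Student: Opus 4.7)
The plan is to show that any monochromatic $(r,m,k)$-daisy inside a $2$-colored $[N]^{(k+r)}$ contains, as a subhypergraph, a monochromatic simple $(r, \lceil m/(k+1) \rceil, k)$-daisy. Granting this reduction, the inequality follows at once: if $N \geq D_r(m,k)$, then every $2$-coloring of $[N]^{(k+r)}$ produces a monochromatic $(r,m,k)$-daisy, hence a monochromatic simple $(r, \lceil m/(k+1) \rceil, k)$-daisy, so $N \geq D_r^{\smp}(\lceil m/(k+1)\rceil, k)$.

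To carry out the reduction, I would start from a monochromatic $(r,m,k)$-daisy $H$ with kernel $K = \{k_1 < \ldots < k_k\}$ and universe of petals $M$, with $|M| = m$. The kernel splits $[N] \setminus K$ into $k+1$ open intervals, which in turn partitions the universe as $M = M_0 \dcup M_1 \dcup \cdots \dcup M_k$, where $M_j$ collects the elements of $M$ lying strictly between $k_j$ and $k_{j+1}$ under the conventions $k_0 = -\infty$ and $k_{k+1} = +\infty$. A pigeonhole argument then produces an index $j$ with $|M_j| \geq \lceil m/(k+1) \rceil$.

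With such a $j$ in hand, I would set $K_0 = \{k_1, \ldots, k_j\}$ and $K_1 = \{k_{j+1}, \ldots, k_k\}$, pick any $M' \subseteq M_j$ with $|M'| = \lceil m/(k+1)\rceil$, and observe that $K_0 < M' < K_1$ holds by construction (with the usual convention that the inequality is vacuous when one side is empty, which covers the boundary cases $j=0$ and $j=k$). The hypergraph $H' = \{K \cup P : P \in (M')^{(r)}\}$ is then a simple $(r, \lceil m/(k+1)\rceil, k)$-daisy with kernel $K = K_0 \dcup K_1$ and universe $M'$; since $H' \subseteq H$ and $H$ is monochromatic, so is $H'$.

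There is essentially no obstacle here: the entire content of the proposition is the pigeonhole choice of an interval block of $M$ relative to $K$, and everything else is a matter of verifying that the induced sub-daisy is simple and inherits monochromaticity. The only point requiring mild care is the handling of the degenerate cases where $M$ puts all its weight in $M_0$ or $M_k$, which is harmless once we allow one side of the simple-daisy kernel to be empty.
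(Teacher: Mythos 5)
Your proof is correct and is exactly the pigeonhole argument the paper is pointing to: the paper cites this as \cite{PRS}, Proposition~5.3 without reproducing the proof, and your decomposition of $M$ into the $k+1$ interval blocks determined by the kernel, followed by choosing the largest block and splitting $K$ accordingly into $K_0<M'<K_1$, is the intended (and essentially unique) argument. The deduction from the reduction to the numerical inequality, and the handling of the boundary cases $j\in\{0,k\}$ by allowing one of $K_0,K_1$ to be empty, are both handled correctly.
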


Our main technical result is an $(r-2)$-iterated exponential lower bound for the Ramsey number of simple $(r,m,k)$-daisies. Note that Theorem \ref{th:main} is a corollary from Proposition \ref{prop:pigeonhole} and Theorem \ref{thm:daisyramsey}.

\begin{theorem}\label{thm:daisyramsey}
Let $r \geq 3$ and $k\geq 1$ be integers. There exist integer $m_0=m_0(r,k)$ and absolute positive constant $c$ such that
\begin{align*}
D_r^{\smp}(m,k)\geq t_{r-2}(ck^{2^{4-r}-2}m^{2^{4-r}})
\end{align*}
holds for $m\geq m_0$.
\end{theorem}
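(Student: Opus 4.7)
I plan to prove Theorem~\ref{thm:daisyramsey} by induction on $r$, combining a random construction at the base level $r=3$ with an iterated daisy-adapted Erd\H{o}s--Hajnal stepping-up. Setting $f_r(m,k) := c\,k^{2^{4-r}-2}m^{2^{4-r}}$, the induction reduces to: (i) $D_3^{\smp}(m,k) \ge 2^{cm^2}$ for $m \ge m_0(k)$, and (ii) a stepping-up lemma asserting that if $[N]^{(k+r)}$ admits a $2$-coloring avoiding monochromatic simple $(r,m,k)$-daisies, then $[2^N]^{(k+r+1)}$ admits a $2$-coloring avoiding monochromatic simple $(r+1,Ckm^2,k)$-daisies, for an absolute constant $C$. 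The identity $(Ck)^{-2^{3-r}}\cdot k^{2^{4-r}-2}=C^{-2^{3-r}}k^{2^{3-r}-2}$ (using $2^{4-r}-2^{3-r}=2^{3-r}$) closes the induction, and since $\sum_{r\ge 3}2^{3-r}=2$ the constant $c$ shrinks by at most a factor $C^{2}$ over the infinitely many steps and hence remains absolute.

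For (i) I would take $N=\lfloor 2^{cm^2}\rfloor$ and color every $(k+3)$-tuple of $[N]$ uniformly and independently in $\{0,1\}$. The number of simple $(3,m,k)$-daisies in $[N]$ is at most $(k+1)\binom{N}{k+m}$ (choose the $k+m$ vertices and the split position $k_0\in\{0,\dots,k\}$), and each such daisy has $\binom{m}{3}$ edges, so the expected count of monochromatic ones is at most $2(k+1)N^{k+m}2^{-\binom{m}{3}}$, which is less than $1$ once $c$ is small and $m\ge m_0(k)$. For (ii) I identify $[2^N]$ with $\{0,1\}^N$ and use the Erd\H{o}s--Hajnal function $\delta(x,y):=\max\{i:x_i\ne y_i\}$, which satisfies $\delta(x,z)=\max(\delta(x,y),\delta(y,z))$ for $x<y<z$. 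I then define $\chi'$ on $T=\{v_1<\cdots<v_{k+r+1}\}$ as a function of its $\delta$-sequence $\delta_i:=\delta(v_i,v_{i+1})$: when the $\delta_i$ are pairwise distinct set $\chi'(T)=\chi(\{\delta_1,\dots,\delta_{k+r}\})$, and otherwise apply an Erd\H{o}s--Hajnal--Rado-style default rule designed to preclude non-monotone petal $\delta$-subsequences inside any monochromatic simple daisy.

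Suppose for contradiction that $\chi'$ contains a monochromatic simple $(r+1,Ckm^2,k)$-daisy with kernel $K=K_0\cup K_1$ and petal universe $M$. The within-kernel $\delta$-values of $K_0$ and $K_1$ together with the two gap values $\delta(\max K_0,\min M)$ and $\delta(\max M,\min K_1)$ form a fixed set $K^*\subset[N]$ of size $k$, independent of the petal. A typed Erd\H{o}s--Szekeres applied to the internal $\delta$-sequence of $M$, with types recording the containing interval among the $k+1$ intervals of $[N]\setminus K^*$, then produces $M^*\subset M$ of size $\ge\sqrt{|M|/(k+1)}\ge m$ whose consecutive $\delta$-values $d_i:=\delta(u'_i,u'_{i+1})$, where $M^*=\{u'_1<\cdots<u'_{|M^*|}\}$, are strictly monotone and all contained in a single slot $I\subset[N]\setminus K^*$. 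Combined with the default rule and the Erd\H{o}s--Hajnal identity, this forces that for each petal $P\in(M^*)^{(r+1)}$ the $\delta$-sequence of $K_0\cup P\cup K_1$ decomposes as $K^*\cup Q_P$, where $Q_P$ ranges (via the free choice of the smallest index of $P$ in $M^*$) over all $r$-subsets of $\{d_i\}$. Since $\{d_i\}\subset I$ and $K^*\cap I=\emptyset$, the kernel $K^*$ splits as $K_0^*<\{d_i\}<K_1^*$, and the resulting monochromatic object in $\chi$ is a simple $(r,|M^*|,k)$-daisy, contradicting the hypothesis.

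The main technical obstacle is the design of the default rule and its interaction with the typed Erd\H{o}s--Szekeres step: the rule must ensure both that monochromatic simple daisies in $\chi'$ force monotone petal $\delta$-subsequences and that the resulting reduction is compatible with the fixed kernel $\delta$-set $K^*$, even though $K^*$ is unknown when $\chi'$ is defined and may in principle interleave with the $d_i$. The combined slot-pigeonhole and Erd\H{o}s--Szekeres loss of $\sqrt{k+1}$ is exactly what inserts the factor $Ck$ into $m\mapsto Ckm^2$ and thus, via the algebraic identity of the first paragraph, produces both the halving $2^{4-r}\to 2^{3-r}$ of the exponent on $m$ and the shift $2^{4-r}-2\to 2^{3-r}-2$ of the exponent on $k$ per induction step, as claimed.
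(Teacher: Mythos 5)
Your high-level strategy — induction on $r$ with a probabilistic base case at $r=3$ and a daisy-adapted Erd\H{o}s--Hajnal stepping-up — is the same as the paper's, and the recursion $m\mapsto Ckm^2$ does track the exponents $2^{4-r}$ and $2^{4-r}-2$ correctly. However, the argument has a genuine gap at exactly the point you flag as ``the main technical obstacle,'' and it is not a gap that your sketch contains the tools to close.

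The problem is the assertion that the kernel $\delta$-set $K^*$ ``is independent of the petal.'' The $\delta$-sequence of an edge $X=K_0\cup P\cup K_1$ contains the boundary values $\delta(\max K_0,\min P)$ and $\delta(\max P,\min K_1)$, which in general \emph{vary with $P$}; they equal $\delta(\max K_0,\min M)$ and $\delta(\max M,\min K_1)$ only if (using the max-identity) $\delta(\max K_0,\min M)\ge\delta(\min M,\min P)$ and $\delta(\max M,\min K_1)\ge\delta(\max P,\max M)$, which is precisely the statement that $M$ (or rather the sub-universe one restricts to) is a \emph{closed} interval in the auxiliary tree, in the paper's terminology. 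This is not automatic and cannot be arranged by a typed Erd\H{o}s--Szekeres alone: Erd\H{o}s--Szekeres on the internal $\delta$-sequence of $M$ controls only the $\delta$-values among petal elements, not the two boundary $\delta$-values, and a monotone slot-confined $M^*$ still leaves the boundary values petal-dependent whenever the kernel's greatest common ancestor with $M$ sits inside $T_{M}$. The paper devotes Lemma~\ref{lem:preprocessing} (the pre-processing) to showing that a sub-universe $M'$ of size $\Theta(\sqrt{m/k})$ can be chosen so that $M'$ is either a closed interval or contained in the ``teeth'' of a maximal comb; the case split is essential, because in the second case the boundary behavior must instead be controlled through the comb's handle, and the coloring $\chi$ is built from an auxiliary coloring $\chi_0$ defined on \emph{all} maximal combs of the edge, summed mod $2$, precisely so that combs not containing the petal cancel (Propositions~\ref{prop:equalinfo}, \ref{prop:uniqueinfo}, \ref{prop:uniquecomb}, \ref{prop:simplecolor}). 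Your ``Erd\H{o}s--Hajnal--Rado-style default rule'' is a single clause for one split position; to handle all $k+1$ placements of $P$ inside the edge simultaneously without knowing that placement in advance, one needs something like the paper's type-1-through-6 classification of combs, and your sketch does not supply it.

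A secondary point: your stepping-up statement passes from $D_r^{\smp}(m,k)$ at fixed kernel size $k$ to $D_{r+1}^{\smp}(Ckm^2,k)$, but the reduction in the paper's Theorem~\ref{th:daisystepup} requires avoiding monochromatic simple $(r-1,\cdot,j)$-daisies for \emph{all} $j\in\{0,\dots,k\}$, taking $N=\min_{0\le j\le k}D_{r-1}^{\smp}(\cdot,j)$, because the projected daisy living in the level set $[N]$ can have any kernel size up to $k$ depending on which maximal comb determines the color. A stepping-up that only consults the $j=k$ case of the inductive hypothesis will not produce the required family of vertical colorings $\{\phi_i\}$.

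The base case is fine: the first-moment bound over $(k+1)\binom{N}{k+m}$ simple daisies gives $D_3^{\smp}(m,k)\ge 2^{cm^2}$ for $m\ge m_0(k)$, matching what the paper obtains from \cite{PRS}.
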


Our proof is a variant of the stepping-up lemma of Erd\H{o}s, Hajnal and Rado \cites{EHR65, GRS13}. There are $k+1$ distinct simple $(r,m,k)$-daisies depending on the sizes of $K_0$ and $K_1$. While it is not hard to construct a coloring avoiding a monochromatic copy of one of these simple daisies, the main challenge is to define a coloring that avoids all $k+1$ simple $(r,m,k)$-daisies simultaneously. To this end, we will introduce in Section \ref{sec:auxtrees} some auxiliary trees using the vertices of our ground set. A big portion of the paper consists on the study of those trees and how to use them to obtain a stepping-up lemma.

The paper is organized as follows. We introduce some auxiliary trees and most of the terminology in Section \ref{sec:auxtrees}. Section \ref{sec:coloring} is devoted to give a general overview of the proof. We briefly describe the stepping-up lemma in \cites{EHR65, GRS13} with our setup and later describe the coloring of the variant. Sections \ref{sec:info} and \ref{sec:preprocessing} are the heart of the proof. We prove a key lemma (Lemma \ref{lem:preprocessing}) that allows us to identify an important auxiliary tree containing the petal of an edge and then show how to reduce the stepping-up argument to this tree. We finish the proof of the stepping-up lemma and Theorem \ref{thm:daisyramsey} in Section \ref{sec:proof}.

\section{Auxiliary trees}\label{sec:auxtrees}

Given an integer $N$, we construct a binary tree $T_{[2^N]}$ of height $N$ with $2^{N+1}-1$ vertices and identify its leaves with the set $[2^N]$. We also identify each level of the tree with the set $[N+1]$, where the root is at level $1$, while the leaves are at level $N+1$. For a vertex $u \in T_{[2^N]}$ we denote its level by $\pi(u)$.

\begin{figure}[h]
\centering
{\hfil \begin{tikzpicture}[scale=1.0]
    \coordinate (A) at (0,0);
    \coordinate (B) at (-2,1);
    \coordinate (C) at (2,1);
    \coordinate (D) at (-3,2);
    \coordinate (E) at (-1,2);
	\coordinate (F) at (1,2);
    \coordinate (G) at (3,2);
    \coordinate [label=above:$1$] (H1) at (-3.5,3);
    \coordinate [label=above:$2$] (H2) at (-2.5,3);
    \coordinate [label=above:$3$] (H3) at (-1.5,3);
    \coordinate [label=above:$4$] (H4) at (-0.5,3);
    \coordinate [label=above:$5$] (H5) at (0.5,3);
    \coordinate [label=above:$6$] (H6) at (1.5,3);
    \coordinate [label=above:$7$] (H7) at (2.5,3);
    \coordinate [label=above:$8$] (H8) at (3.5,3);
    
    \coordinate [label=left:$1$] (L1) at (-5,0);
    \coordinate [label=left:$2$] (L2) at (-5,1);
    \coordinate [label=left:$3$] (L3) at (-5,2);
    \coordinate [label=left:$4$] (L4) at (-5,3);
    \coordinate (R1) at (5,0);
    \coordinate (R2) at (5,1);
    \coordinate (R3) at (5,2);
    \coordinate (R4) at (5,3);

    \draw[line width=0.5] (C)--(A)--(B);
    \draw[line width=0.5] (D)--(B)--(E);
    \draw[line width=0.5] (F)--(C)--(G);
    \draw[line width=0.5] (H1)--(D)--(H2);
    \draw[line width=0.5] (H3)--(E)--(H4);
    \draw[line width=0.5] (H5)--(F)--(H6);
    \draw[line width=0.5] (H7)--(G)--(H8);
    \draw[line width=0.5] (L1)--(L2)--(L3)--(L4);
    \draw[dashed] (L1)--(R1);
    \draw[dashed] (L2)--(R2);
    \draw[dashed] (L3)--(R3);
    \draw[dashed] (L4)--(R4);
    \draw (L1) circle [radius=0.05];
 	\draw (L2) circle [radius=0.05];
 	\draw (L3) circle [radius=0.05];
 	\draw (L4) circle [radius=0.05];
 	\draw[fill] (H1) circle [radius=0.05];
 	\draw[fill] (H2) circle [radius=0.05];
 	\draw[fill] (H3) circle [radius=0.05];
 	\draw[fill] (H4) circle [radius=0.05];
 	\draw[fill] (H5) circle [radius=0.05];
 	\draw[fill] (H6) circle [radius=0.05];
 	\draw[fill] (H7) circle [radius=0.05];
 	\draw[fill] (H8) circle [radius=0.05];
 	\draw[fill] (A) circle [radius=0.05];
 	\draw[fill] (B) circle [radius=0.05];
 	\draw[fill] (C) circle [radius=0.05];
 	\draw[fill] (D) circle [radius=0.05];
 	\draw[fill] (E) circle [radius=0.05];
 	\draw[fill] (F) circle [radius=0.05];
 	\draw[fill] (G) circle [radius=0.05];
    
    \end{tikzpicture}\hfil}
\caption{An example of a binary tree $T_{[2^3]}$ with its $4$ levels}
\label{fig:binarytree}
\end{figure}
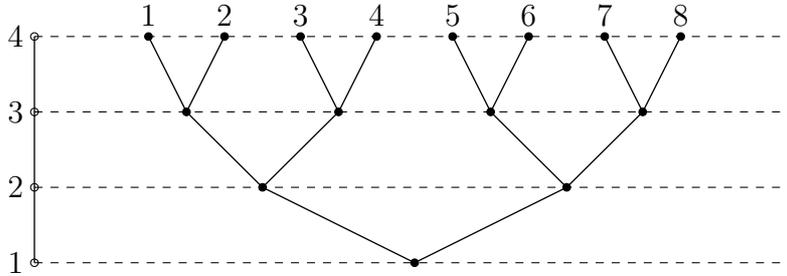

Given two vertices $u,v$ in $T_{[2^N]}$, we say that $u$ is an \textit{ancestor} of $v$ if $\pi(u)<\pi(v)$ and there is a path $u=x_1,x_2,\ldots,x_{\ell}=v$ in $T_{[2^N]}$ such that $\pi(x_i)\neq \pi(x_j)$ for every $1\leq i,j \leq \ell$. For two vertices $x,y \in [2^N]$ we define the $\textit{greatest common ancestor}$ $a(x,y)$ of $x$ and $y$ as the vertex of $T_{[2^N]}$ of highest level that is an ancestor of both $x$ and $y$. Also define
\begin{align*}
    \delta(x,y)=\pi(a(x,y)).
\end{align*}

Let $X=\{x_1,\ldots,x_t\} \subseteq [2^N]$ with $x_1<\ldots<x_t$ be a subset of the leaves of our binary tree. We define the \textit{auxiliary tree} $T_X$ of $X$ as the subtree of $T_{[2^N]}$ whose vertices are $X$ and all their common ancestors. That is,
\begin{align*}
    T_X=X \cup \{a(x_i,x_{i+1}):\:1\leq i \leq t-1\}.
\end{align*}

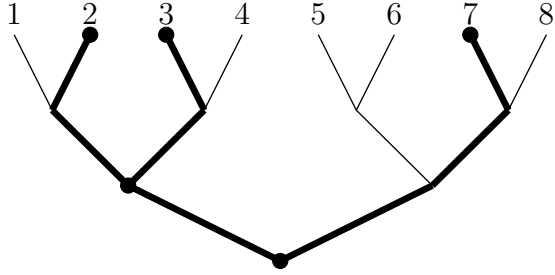
\begin{figure}[h]
\centering
{\hfil \begin{tikzpicture}[scale=1.0]
    \coordinate (A) at (0,0);
    \coordinate (B) at (-2,1);
    \coordinate (C) at (2,1);
    \coordinate (D) at (-3,2);
    \coordinate (E) at (-1,2);
	\coordinate (F) at (1,2);
    \coordinate (G) at (3,2);
    \coordinate [label=above:$1$] (H1) at (-3.5,3);
    \coordinate [label=above:$2$] (H2) at (-2.5,3);
    \coordinate [label=above:$3$] (H3) at (-1.5,3);
    \coordinate [label=above:$4$] (H4) at (-0.5,3);
    \coordinate [label=above:$5$] (H5) at (0.5,3);
    \coordinate [label=above:$6$] (H6) at (1.5,3);
    \coordinate [label=above:$7$] (H7) at (2.5,3);
    \coordinate [label=above:$8$] (H8) at (3.5,3);

    \draw[line width=2.5] (C)--(A)--(B);
    \draw[line width=2.5] (D)--(B)--(E);
    \draw[line width=0.5] (F)--(C);
    \draw[line width=2.5] (G)--(C);
    \draw[line width=0.5] (H1)--(D);
    \draw[line width=2.5] (H2)--(D);
    \draw[line width=2.5] (H3)--(E);
    \draw[line width=0.5] (H4)--(E);
    \draw[line width=0.5] (H5)--(F)--(H6);
    \draw[line width=2.5] (H7)--(G);
    \draw[line width=0.5] (H8)--(G);
 	\draw[fill] (H2) circle [radius=0.1];
 	\draw[fill] (H3) circle [radius=0.1];
 	\draw[fill] (H7) circle [radius=0.1];
 	\draw[fill] (B) circle [radius=0.1];
 	\draw[fill] (A) circle [radius=0.1];
    
    \end{tikzpicture}\hfil}
\caption{The auxiliary tree $T_X$ for $X=\{2,3,7\}$.}
\label{fig:auxtrees}
\end{figure}

Note that $T_X$ is a tree of $2t-1$ vertices. Moreover, we denote the set of non-leaves by $a(X)$ and its projection by $\delta(X)$, i.e.,
\begin{align*}
a(X)&=\{a(x_i,x_{i+1}):\:1\leq i \leq t-1\}  \\
\delta(X)&=\{\delta(x_i,x_{i+1}):\: 1\leq i \leq t-1\}.
\end{align*}
Since the auxiliary tree $T_X$ is uniquely determined by its ground set $X$, sometimes we will denote $T_X$ by $X$.

Given a vertex $u \in a(X)$, we can define the set $X(u)$ of \textit{descendants} of $u$ as the leaves of $T_X$ that have $u$ as an ancestor. That is,
\begin{align*}
    X(u)=\{x \in X:\: \text{$u$ is an ancestor of $x$}\}.
\end{align*}
The set of descendants of $u$ can be partitioned into the left descendants and right descendants as follows: Since $T_X$ is a binary tree, the vertex $u$ has two children $u^L$ and $u^R$. Let $u^L$ be the left children of $u$ and $u^R$ be the right children of $u$. Then we define the left descendants of $u$ by
\begin{align*}
    X_L(u)=\begin{cases}
        u^L &\quad \text{if $u^L \in X$},\\
        X(u^L) &\quad \text{if $u^L\in a(X)$},
    \end{cases}
\end{align*}
and the right descendants of $u$ by
\begin{align*}
   X_R(u)=\begin{cases}
        u^R &\quad \text{if $u^R \in X$},\\
        X(u^R) &\quad \text{if $u^R\in a(X)$},
    \end{cases}
\end{align*}
Note that by this definition $X_L(u), X_R(u)\neq \emptyset$ and $\max X_L(u)<\min X_R(u)$.

Although an auxiliary tree is not uniquely determined by its ancestors, we can at least determine the ``shape" of the tree $T_X$ by looking at $a(X)$. In a more precise way, the following can be proved by a simple induction.

\begin{fact}\label{fact:shape}
If $X$ and $Y$ are subsets of $[2^N]$ such that $a(X)=a(Y)$, then $|X|=|Y|$. Moreover, if $X=\{x_1,\ldots,x_t\}$ and $Y=\{y_1,\ldots,y_t\}$, then $a(x_i,x_{i+1})=a(y_i,y_{i+1})$ for every $1\leq i \leq t-1$.
\end{fact}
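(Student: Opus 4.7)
The plan is to prove both claims by induction on $t=|X|$, with the equality of cardinalities following from the injectivity of the map $i \mapsto a(x_i,x_{i+1})$.

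First I would establish that for any $X \subseteq [2^N]$ the $t-1$ vertices $a(x_1,x_2),\dots,a(x_{t-1},x_t)$ are pairwise distinct, so that $|a(X)| = |X| - 1$. Indeed, if $u = a(x_i,x_{i+1}) = a(x_j,x_{j+1})$ with $i < j$, then by definition of the greatest common ancestor $x_i$ lies in the left subtree of $u$ while $x_{i+1}$ lies in the right subtree. But $x_j$ and $x_{j+1}$ are both descendants of $u$ and both $\geq x_{i+1}$, so they must lie in the right subtree of $u$, contradicting that $u$ is their greatest common ancestor. This already gives $|X|=|Y|$ from $a(X)=a(Y)$.

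For the pairing claim, I would induct on $t$. The base case $t=1$ is vacuous, since then $a(X)=\emptyset=a(Y)$ forces $|Y|=1$. Otherwise, let $u$ be the unique vertex of $a(X)=a(Y)$ with smallest value of $\pi$. I would first check that $u$ is an ancestor of every element of $X$ (and similarly of $Y$): otherwise there would be a consecutive pair $x_i,x_{i+1}$ with exactly one of them a descendant of $u$, which would force $a(x_i,x_{i+1})$ to sit strictly above $u$ in $T_{[2^N]}$ and contradict the minimality of $\pi(u)$. Splitting $X = X_L \cup X_R$ (resp. $Y = Y_L \cup Y_R$) by which subtree of $u$ each element falls into, all four pieces are non-empty, and one has the disjoint decomposition $a(X) = \{u\} \cup a(X_L) \cup a(X_R)$ with $a(X_L)$ strictly inside the left subtree of $u$ and $a(X_R)$ strictly inside the right. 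This decomposition can be read off directly from the set $a(X)$, so $a(X)=a(Y)$ yields $a(X_L)=a(Y_L)$ and $a(X_R)=a(Y_R)$. The inductive hypothesis applied to each side gives $|X_L|=|Y_L|$, $|X_R|=|Y_R|$ with matching pairings within each, while the one remaining pair that straddles $u$ is indexed by $|X_L|=|Y_L|$ on both sides and has common ancestor $u$ in either case.

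The only step that takes a moment of care is the verification that the topmost vertex of $a(X)$ is an ancestor of every element of $X$; once that is in hand, the rest is a clean recursion along the root split of the auxiliary tree, consistent with the paper's description of this as a ``simple induction''.
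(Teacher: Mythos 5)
Your proof is correct, and since the paper offers no proof beyond the remark that the fact ``can be proved by a simple induction,'' your argument is a legitimate realization of exactly that hint: a recursion along the top split of the auxiliary tree. One small presentational point: you invoke the \emph{unique} vertex $u$ of $a(X)$ of minimal $\pi$ before justifying uniqueness; it is cleaner to pick \emph{any} minimizer $u$, then run your ``ancestor of everything'' argument (which shows $u = a(x_1,x_t)$) and observe that uniqueness follows because two distinct vertices on the same level cannot share a leaf descendant. With that rewording, the injectivity observation at the start and the root-split recursion are both sound, and the recursion indeed forces $|X_L|=|Y_L|$ and hence that the straddling pair in each set maps to $u$.
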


Now we devote the rest of the section on classifying our auxiliary trees. 

\begin{definition}\label{def:closed}
Given $X=\{x_1,\ldots,x_t\}\subseteq [2^N]$. We say that an interval $I=\{x_p,\ldots,x_q\} \subseteq X$  for some $1\leq p\leq q \leq t$ is \textit{closed} in $X$ if the following condition holds:

\begin{enumerate}
    \item[($\star$)] $I=X(a(x_p,x_q))$.
\end{enumerate}
\end{definition}

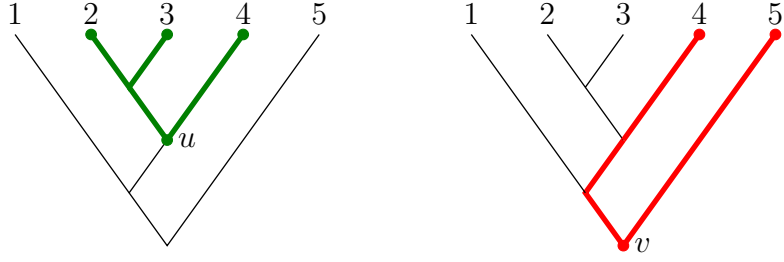
\begin{figure}[h]
\centering
{\hfil \begin{tikzpicture}[scale=1.0]
    \coordinate (A) at (-3,0);
    \coordinate (B) at (-3.5,0.7);
    \coordinate [label=right:$u$] (C) at (-3,1.4);
    \coordinate (D) at (-3.5,2.1);
    \coordinate [label=above:$1$] (E1) at (-5,2.8);
    \coordinate [label=above:$2$] (E2) at (-4,2.8);
    \coordinate [label=above:$3$] (E3) at (-3,2.8);
    \coordinate [label=above:$4$] (E4) at (-2,2.8);
    \coordinate [label=above:$5$] (E5) at (-1,2.8);
    
    \coordinate [label=right:$v$] (X) at (3,0);
    \coordinate (Y) at (2.5,0.7);
    \coordinate (Z) at (3,1.4);
    \coordinate (W) at (2.5,2.1);
    \coordinate [label=above:$1$] (U1) at (1,2.8);
    \coordinate [label=above:$2$] (U2) at (2,2.8);
    \coordinate [label=above:$3$] (U3) at (3,2.8);
    \coordinate [label=above:$4$] (U4) at (4,2.8);
    \coordinate [label=above:$5$] (U5) at (5,2.8);

    \draw[line width=0.5] (E5)--(A)--(B)--(C);
    \draw[line width=0.5] (B)--(E1);
    \draw[line width=2][green1] (D)--(C)--(E4);
    \draw[line width=2][green1] (E3)--(D)--(E2);
    
    \draw[line width=2][red] (U5)--(X)--(Y)--(Z);
    \draw[line width=0.5] (Z)--(W);
    \draw[line width=0.5] (Y)--(U1);
    \draw[line width=2][red] (Z)--(U4);
    \draw[line width=0.5] (U3)--(W)--(U2);
    
    \draw[fill][green1] (C) circle [radius=0.07];
 	\draw[fill][green1] (E2) circle [radius=0.07];
 	\draw[fill][green1] (E3) circle [radius=0.07];
 	\draw[fill][green1] (E4) circle [radius=0.07];
 	
 	\draw[fill][red] (X) circle [radius=0.07];
 	\draw[fill][red] (U4) circle [radius=0.07];
 	\draw[fill][red] (U5) circle [radius=0.07];
    
    \end{tikzpicture}\hfil}
\caption{The interval $\{2,3,4\}$ is closed, since $X(u)=\{2,3,4\}$ for $u=a(2,4)$. The interval $\{4,5\}$ is not closed, since $X(v)=\{1,2,3,4,5\}\neq \{4,5\}$ for $v=a(4,5)$.}
\label{fig:closed}
\end{figure}

Alternatively, one can replace ($\star$) by the useful equivalent condition:

\begin{enumerate}
    \item[($\star\star$)] For every vertex $y\in X\setminus I$, the vertex $a(x_p,x_q)$ is not an ancestor of $y$.
\end{enumerate}

The following proposition shows that closed intervals cannot have proper intersections.

\begin{proposition}\label{prop:closed}
Let $I_1$, $I_2$ be two intervals in $X$ with $|I_1|\leq |I_2|$. If $I_1$ and $I_2$ are closed, then either $I_1 \cap I_2=\emptyset$ or $I_1\subseteq I_2$.
\end{proposition}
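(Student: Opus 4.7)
The plan is to translate each closed interval into the leaf-descendant set of a single vertex of the auxiliary tree, and then invoke the standard nesting property for rooted trees.

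First I would observe that Definition~\ref{def:closed}(ii) is precisely the statement that a closed interval $I=\{x_p,\ldots,x_q\}$ coincides with $X(u)$ for $u=a(x_p,x_q)$ (with the convention that $u=x_p$ when $p=q$, since the descendant set of a leaf is the leaf itself). Writing $I_j=X(u_j)$ for $j=1,2$ with $u_j=a(x_{p_j},x_{q_j})$, the proposition reduces to the purely tree-theoretic claim that for any two vertices $u_1,u_2$ of $T_{[2^N]}$, the leaf-descendant sets $X(u_1)$ and $X(u_2)$ are either disjoint or one contains the other.

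For this claim I would argue contrapositively. Suppose some $y\in X$ is a common descendant of $u_1$ and $u_2$; then both vertices lie on the unique path from $y$ up to the root of $T_{[2^N]}$, which is linearly ordered by ancestry, so one of $u_1,u_2$ must in fact be an ancestor of the other. Equivalently, if neither is an ancestor of the other then $X(u_1)\cap X(u_2)=\emptyset$, and we are done. Otherwise, without loss of generality $u_2$ is an ancestor of $u_1$ (possibly equal), which immediately gives $X(u_1)\subseteq X(u_2)$.

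Finally I would use the size hypothesis $|I_1|\leq |I_2|$ to dispose of the remaining subcase: if $u_1$ were a \emph{proper} ancestor of $u_2$, then $X(u_2)\subsetneq X(u_1)$ would force $|I_2|<|I_1|$, contradicting the assumption. Hence either $I_1\cap I_2=\emptyset$ or $I_1\subseteq I_2$, as required. I do not anticipate a genuine obstacle here; the only delicate bookkeeping is a uniform treatment of the singleton case $p=q$, handled by the convention above.
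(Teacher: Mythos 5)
Your proof is correct, and it is essentially the same argument as the paper's: both hinge on the fact that two vertices of $T_{[2^N]}$ that are ancestors of a common leaf must lie on a single root-to-leaf path and hence are comparable under the ancestor relation. The paper frames this as a contradiction with condition (ii*) applied to the smaller interval, whereas you frame it directly as the laminar property of the sets $X(u)=X\cap\{\text{descendant leaves of }u\}$, which is a slightly cleaner packaging of the identical idea. One small remark: the strict containment $X(u_2)\subsetneq X(u_1)$ you assert in the last subcase does hold (because $u_1=a(x_{p_1},x_{q_1})$ with $p_1<q_1$ has $X$-descendants in \emph{both} of its subtrees, while $u_2$ sits in only one of them), but you do not actually need strictness: even the non-strict inclusion $X(u_2)\subseteq X(u_1)$ together with $|I_1|\le|I_2|$ forces $I_1=I_2$, which still yields $I_1\subseteq I_2$.
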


\begin{proof}
Suppose that $I_1\cap I_2$ is a proper intersection. That is, $I_1\cap I_2 \neq \emptyset$, $I_1\setminus I_2\neq \emptyset$ and $I_2\setminus I_1\neq \emptyset$. Write $X=\{x_1,\ldots,x_t\}$ and $I_1=\{x_{p_1},x_{p_1+1},\ldots,x_{q_1}\}$, $I_2=\{x_{p_2},x_{p_2+1},\ldots,x_{q_2}\}$ for $1\leq p_1<p_2\leq q_1<q_2\leq t$. Let $u=a(x_{p_1},x_{q_1})$ and $v=a(x_{p_2},x_{q_2})$. We claim that either $u$ is an ancestor of $v$ or $v$ is an ancestor of $u$. Let $z\in I_1\cap I_2$. By definition, both $u$ and $v$ are ancestors of $z$. This means that there exists descending paths connecting $z$ to $u$ and $z$ to $v$ in $T_X$ with vertices in different levels. However, every vertex in $T_X$ has at most one father. Therefore, either the path $z$ to $u$ contains the path $z$ to $v$ or vice-versa. If the path $z$ to $u$ contains the path $z$ to $v$, then $u$ is an ancestor of $v$. Hence $u$ is an ancestor of $I_2\setminus I_1$, which contradicts the fact that $I_1$ is closed (Condition ($\star\star$) of Definition \ref{def:closed}). The other case is analogous.  
\end{proof}

We classify the closed intervals of $X$ by three classes: left combs, right combs and broken combs.

\begin{definition}\label{def:comb}
Given a closed interval $I$ in $X$ we say that
\begin{enumerate}
    \item[(a)] $I$ is a \textit{$\ell$-left comb} if $\ell$ is the least positive integer such that there exists a partition $I=A\cup B$ with $|A|=\ell$ and $B\neq \emptyset$ and
    \begin{enumerate}
        \item[(a1)] $A<B$.
        \item[(a2)] $A$ is a closed interval in $X$
        \item[(a3)] If $z=\max(A)$ and $B=\{b_1,\ldots,b_s\}$, then $\delta(z,b_1)>\delta(b_1,b_2)>\ldots>\delta(b_{s-1},b_s)$.
        \end{enumerate}
        
    \item[(b)] $I$ is a \textit{$\ell$-right comb} if $\ell$ is the least positive integer such that there exists a partition $I=A\cup B$ with $|A|=\ell$ and $B\neq \emptyset$ and
    \begin{enumerate}
        \item[(b1)] $B<A$.
        \item[(b2)] $A$ is a closed interval in $X$
        \item[(b3)] If $z=\min(A)$ and $B=\{b_1,\ldots,b_s\}$, then $\delta(b_1,b_2)<\ldots<\delta(b_{s-1},b_s)<\delta(b_t,z)$.
    \end{enumerate}
    
    \item[(c)] $I$ is a \textit{broken comb} if it is neither a left or right comb.
\end{enumerate}
\end{definition}

\begin{figure}[h]
\centering
{\hfil \begin{tikzpicture}[scale=0.7]
    
    \draw[red] (-9,4.5) ellipse (1 and 0.3);
   
    \coordinate (A) at (-10,4.5);
    \coordinate (B) at (-8,4.5);
    \coordinate (C) at (-9,3);
    \coordinate (D1) at (-8.66,2.5);
    \coordinate (D2) at (-8.33,2);
    \coordinate (D3) at (-8,1.5);
    \coordinate (D4) at (-7.66,1);
    \coordinate (D5) at (-7.33,0.5);
    \coordinate (D6) at (-7,0);
    \coordinate (E1) at (-7.33,4.5);
    \coordinate (E2) at (-6.66,4.5);
    \coordinate (E3) at (-6,4.5);
    \coordinate (E4) at (-5.33,4.5);
    \coordinate (E5) at (-4.66,4.5);
    \coordinate (E6) at (-4,4.5);
    \coordinate [label=above:$A$] (o1) at (-9,5);
    \coordinate [label=above:$B$] (o2) at (-5.66,5.4);
    
    \draw [decorate,
    decoration = {brace, amplitude=8pt}] (-7.2,4.9) -- (-4.1,4.9);
    
    \draw[line width=0.5][red] (D6)--(C)--(A);
    \draw[line width=0.5][red] (B)--(C);
    \draw[line width=0.5][red] (D1)--(E1);
    \draw[line width=0.5][red] (D2)--(E2);
    \draw[line width=0.5][red] (D3)--(E3);
    \draw[line width=0.5][red] (D4)--(E4);
    \draw[line width=0.5][red] (D5)--(E5);
    \draw[line width=0.5][red] (D6)--(E6);
    
 	\draw[blue] (2,4.5) ellipse (1 and 0.3);
 	
 	\coordinate (F) at (3,4.5);
 	\coordinate (G) at (1,4.5);
    \coordinate (H) at (2,3);
    \coordinate (M1) at (1.66,2.5);
    \coordinate (M2) at (1.33,2);
    \coordinate (M3) at (1,1.5);
    \coordinate (M4) at (0.66,1);
    \coordinate (M5) at (0.33,0.5);
    \coordinate (M6) at (0,0);
    \coordinate (N1) at (0.33,4.5);
    \coordinate (N2) at (-0.33,4.5);
    \coordinate (N3) at (-1,4.5);
    \coordinate (N4) at (-1.66,4.5);
    \coordinate (N5) at (-2.33,4.5);
    \coordinate (N6) at (-3,4.5);
    \coordinate [label=above:$A$] (o3) at (2,5);
    \coordinate [label=above:$B$] (o4) at (-1.33,5.4);
    
    \draw [decorate,
    decoration = {brace, amplitude=8pt}] (-2.9,4.9) --  (0.2,4.9);
    
    \draw[line width=0.5][blue] (M6)--(H)--(F);
    \draw[line width=0.5][blue] (H)--(G);
    \draw[line width=0.5][blue] (M1)--(N1);
    \draw[line width=0.5][blue] (M2)--(N2);
    \draw[line width=0.5][blue] (M3)--(N3);
    \draw[line width=0.5][blue] (M4)--(N4);
    \draw[line width=0.5][blue] (M5)--(N5);
    \draw[line width=0.5][blue] (M6)--(N6);
 	
 	
 	\draw[green1] (9,4.5) ellipse (1 and 0.3);
 	\draw[green1] (5.8,4.5) ellipse (1.8 and 0.3);
 	
 	\coordinate (X) at (4,4.5);
 	\coordinate (Y) at (7.6,4.5);
    \coordinate (Z) at (5.8,1.8);
    \coordinate (P) at (8,4.5);
 	\coordinate (Q) at (10,4.5);
    \coordinate (R) at (9,3);
    \coordinate (U) at (7,0);
    \coordinate [label=above:$A$] (o5) at (7,5.4);
    
    \draw [decorate,
    decoration = {brace, amplitude=8pt}] (4.2,5) --  (9.8,5);
 	
 	\draw[line width=0.5][green1] (X)--(Z)--(Y);
    \draw[line width=0.5][green1] (P)--(R)--(Q);
    \draw[line width=0.5][green1] (R)--(U)--(Z);

    \end{tikzpicture}\hfil}
    \caption{An example of a left, right and broken comb, respectively.}
    \label{fig:combs}
\end{figure}
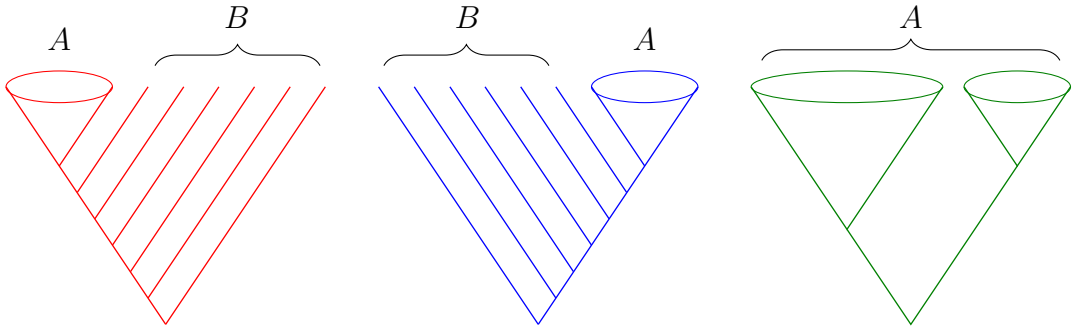

We will use the convention that an $\ell$-left/right comb will be described by its partition $I=A\cup B$ with $|A|=\ell$ that verifies the condition on Definition \ref{def:comb}. As we can see in the picture above, the set $A$ should be thought as the ``handle" of the comb, while the set $B$ should be thought as the ``teeth" of the comb. For broken combs we will adopt the same convention by assuming that $B= \emptyset$.

One may remove the use of the projection $\delta(b_i,b_{i+1})$ in conditions (a3) and (b3) of the right/left comb by using the following equivalent alternative conditions:
\begin{enumerate}
    \item[(a3*)] If $B=\{b_1,\ldots,b_s\}$, then the intervals $A\cup\{b_1,\ldots,b_i\}$ are closed in $X$ for every $1\leq i \leq s$
    \item[(b3*)] If $B=\{b_1,\ldots,b_s\}$, then the intervals $\{b_i,\ldots,b_s\}\cup A$ are closed in $X$ for every $1\leq i \leq s$.
\end{enumerate}
Those conditions have the advantage of describing a comb only using closed intervals. This will be useful later in the proof.

\begin{example}\label{ex:1comb}
A important type of comb in the stepping-up lemma \cites{EHR65, GRS13} is the $1$-left/right comb. Those are the combs $I=\{y_1,\ldots,y_t\}$ satisfying that the sequence $\{\delta(y_i,y_{i+1})\}_{1\leq i \leq t}$ is monotone. Indeed, the interval $I$ is a $1$-left comb if $\delta(y_1,y_2)>\ldots>\delta(y_{t-1},y_t)$, while it is a $1$-right comb if $\delta(y_1,y_2)<\ldots<\delta(y_{t-1},y_t)$.
\end{example}

\begin{figure}[h]
\centering
{\hfil \begin{tikzpicture}[scale=0.7]
    \coordinate (R1) at (-3,0);
    \coordinate (R2) at (-3.5,1);
    \coordinate (R3) at (-4,2);
    \coordinate (R4) at (-4.5,3);
    \coordinate (R5) at (-5,4);
	\coordinate (R6) at (-4,4);
    \coordinate (R7) at (-3,4);
    \coordinate (R8) at (-2,4);
    \coordinate (R9) at (-1,4);
    \coordinate (H1) at (3,0);
    \coordinate (H2) at (3.5,1);
    \coordinate (H3) at (4,2);
    \coordinate (H4) at (4.5,3);
    \coordinate (H5) at (5,4);
    \coordinate (H6) at (4,4);
    \coordinate (H7) at (3,4);
    \coordinate (H8) at (2,4);
    \coordinate (H9) at (1,4);

    \draw[line width=1] (R1)--(R2)--(R3)--(R4)--(R5);
    \draw[line width=1] (R1)--(R9);
    \draw[line width=1] (R2)--(R8);
    \draw[line width=1] (R3)--(R7);
    \draw[line width=1] (R4)--(R6);
    
    \draw[line width=1] (H1)--(H2)--(H3)--(H4)--(H5);
    \draw[line width=1] (H1)--(H9);
    \draw[line width=1] (H2)--(H8);
    \draw[line width=1] (H3)--(H7);
    \draw[line width=1] (H4)--(H6);

    \end{tikzpicture}\hfil}
\caption{A left and right $1$-comb}
\label{fig:1comb}
\end{figure}
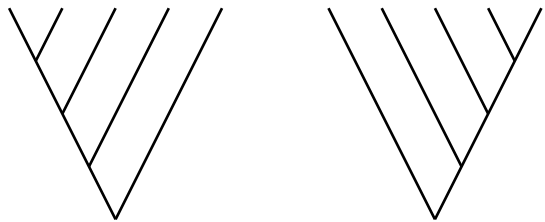

For the proof of Theorem \ref{thm:daisyramsey} we will be interested in maximal comb structures inside our auxiliary trees.

\begin{definition}\label{def:maximalcomb}
Given $X=\{x_1,\ldots,x_t\} \subseteq [2^N]$, a interval $I=\{x_p,\ldots,x_q\}$ is a
\begin{enumerate}
    \item[(a)] \textit{Maximal left comb} in $X$ if $I$ is a left comb and $I\cup\{x_{q+1}\}$ is not a closed interval in $X$.
    \item[(b)] \textit{Maximal right comb} in $X$ if $I$ is a right comb and $I\cup\{x_{p-1}\}$ is not a closed interval in $X$.
    \item[(c)] \textit{Maximal broken comb} in $X$ if $I$ is a broken comb and neither $I\cup\{x_{p-1}\}$ or $I\cup\{x_{q+1}\}$ are closed.
\end{enumerate}
\end{definition}

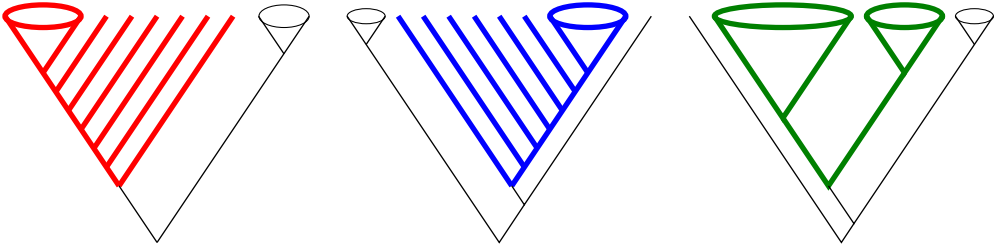
\begin{figure}[h]
\centering
{\hfil \begin{tikzpicture}[scale=0.5]
    
    \draw[red][line width=2] (-12,6) ellipse (1 and 0.3);
    \draw (-5.66,6) ellipse (0.66 and 0.3);
   
    \coordinate (A) at (-13,6);
    \coordinate (B) at (-11,6);
    \coordinate (C) at (-12,4.5);
    \coordinate (D1) at (-11.66,4);
    \coordinate (D2) at (-11.33,3.5);
    \coordinate (D3) at (-11,3);
    \coordinate (D4) at (-10.66,2.5);
    \coordinate (D5) at (-10.33,2);
    \coordinate (D6) at (-10,1.5);
    \coordinate (D7) at (-9,0);
    \coordinate (E1) at (-10.33,6);
    \coordinate (E2) at (-9.66,6);
    \coordinate (E3) at (-9,6);
    \coordinate (E4) at (-8.33,6);
    \coordinate (E5) at (-7.66,6);
    \coordinate (E6) at (-7,6);
    \coordinate (E7) at (-6.33,6);
    \coordinate (E8) at (-5,6);
    \coordinate (C2) at (-5.66,5);
    
    \draw[line width=2][red] (D6)--(C)--(A);
    \draw[line width=2][red] (B)--(C);
    \draw[line width=2][red] (D1)--(E1);
    \draw[line width=2][red] (D2)--(E2);
    \draw[line width=2][red] (D3)--(E3);
    \draw[line width=2][red] (D4)--(E4);
    \draw[line width=2][red] (D5)--(E5);
    \draw[line width=2][red] (D6)--(E6);
    \draw[line width=0.5] (D6)--(D7);
    \draw[line width=0.5] (D7)--(E8);
    \draw[line width=0.5] (C2)--(E7);
    
 	\draw[line width=2][blue] (2.33,6) ellipse (1 and 0.3);
 	\draw (-3.5,6) ellipse (0.5 and 0.2);
 	
 	\coordinate (F) at (3.33,6);
 	\coordinate (G) at (1.33,6);
    \coordinate (H) at (2.33,4.5);
    \coordinate (M1) at (2,4);
    \coordinate (M2) at (1.66,3.5);
    \coordinate (M3) at (1.33,3);
    \coordinate (M4) at (1,2.5);
    \coordinate (M5) at (0.66,2);
    \coordinate (M6) at (0.33,1.5);
    \coordinate (M7) at (0.66,1);
    \coordinate (M8) at (0,0);
    \coordinate (N1) at (0.66,6);
    \coordinate (N2) at (0,6);
    \coordinate (N3) at (-0.66,6);
    \coordinate (N4) at (-1.33,6);
    \coordinate (N5) at (-2,6);
    \coordinate (N6) at (-2.66,6);
    \coordinate (N7) at (-3,6);
    \coordinate (N8) at (-4,6);
    \coordinate (N9) at (4,6);
    \coordinate (H2) at (-3.5,5.25);
    
    \draw[line width=2][blue] (M6)--(H)--(F);
    \draw[line width=2][blue] (H)--(G);
    \draw[line width=2][blue] (M1)--(N1);
    \draw[line width=2][blue] (M2)--(N2);
    \draw[line width=2][blue] (M3)--(N3);
    \draw[line width=2][blue] (M4)--(N4);
    \draw[line width=2][blue] (M5)--(N5);
    \draw[line width=2][blue] (M6)--(N6);
    \draw[line width=0.5] (M7)--(M8)--(N8);
    \draw[line width=0.5] (M6)--(M7)--(N9);
    \draw[line width=0.5] (H2)--(N7);
 	
 	
 	\draw[line width=2][green1] (10.66,6) ellipse (1 and 0.3);
 	\draw[line width=2][green1] (7.46,6) ellipse (1.8 and 0.3);
 	\draw (12.5,6) ellipse (0.5 and 0.2);
 	
 	\coordinate (L) at (5,6);
 	\coordinate (X) at (5.66,6);
 	\coordinate (Y) at (9.26,6);
    \coordinate (Z) at (7.46,3.3);
    \coordinate (P) at (9.66,6);
 	\coordinate (Q) at (11.66,6);
    \coordinate (R) at (10.66,4.5);
    \coordinate (U) at (8.66,1.5);
    \coordinate (O) at (9,0);
    \coordinate (V1) at (12,6);
    \coordinate (V2) at (13,6);
    \coordinate (R1) at (9.33,0.5);
    \coordinate (R2) at (12.5,5.25);
 	
 	\draw[line width=2][green1] (X)--(Z)--(Y);
    \draw[line width=2][green1] (P)--(R)--(Q);
    \draw[line width=2][green1] (R)--(U)--(Z);
    \draw[line width=0.5] (L)--(O)--(R1);
    \draw[line width=0.5] (U)--(R1)--(V2);
    \draw[line width=0.5] (R2)--(V1);

    \end{tikzpicture}\hfil}
    \caption{An example of a maximal left, right and broken comb, respectively.}
    \label{fig:maxcombs}
\end{figure}

The next proposition shows that given two maximal combs they are either disjoint or one is contained in the ``handle" of the other.

\begin{proposition}\label{prop:maximalcomb}
Given a closed inteval $I_1$ and a maximal comb $I_2=A_2\cup B_2$ with $|I_1|\leq |I_2|$ in a set $X\subseteq [2^N]$, then one of the following holds:
\begin{enumerate}
    \item $I_1\cap I_2=\emptyset$
    \item $I_1\subseteq A_2$.
    \item $I_1=A_2\cup B_1$ for some initial segment $B_1\subseteq B_2$
\end{enumerate}
Moreover, condition (3) only holds if $I_1$ is not a maximal comb.
\end{proposition}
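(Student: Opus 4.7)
The strategy is to apply Proposition \ref{prop:closed} twice and then exploit the equivalent nested-closed-interval characterizations (a3*) and (b3*) of combs. First, Proposition \ref{prop:closed} applied to the pair $(I_1, I_2)$ yields either $I_1 \cap I_2 = \emptyset$ (case (1)) or $I_1 \subseteq I_2$. Assume the latter and apply the same proposition to $(I_1, A_2)$, both closed. The three possibilities are $A_2 \subseteq I_1$, $I_1 \subseteq A_2$ (case (2)), or $I_1 \cap A_2 = \emptyset$, in which case $I_1 \subseteq B_2$.

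The crux of the proof is ruling out this last sub-case $I_1 \subseteq B_2$ for $|I_1| \geq 2$. Suppose $I_2$ is a left comb with teeth $B_2 = \{b_1, \ldots, b_s\}$, $z = \max A_2$, and $\delta(z, b_1) > \delta(b_1, b_2) > \cdots > \delta(b_{s-1}, b_s)$, and let $I_1 = \{b_{i_1}, \ldots, b_{i_2}\}$ with $i_2 > i_1 \geq 2$. The identity $\delta(y_1, y_t) = \min_{1 \leq j \leq t-1} \delta(y_j, y_{j+1})$ (valid in any binary tree) combined with strict monotonicity gives $\delta(b_{i_1 - 1}, b_{i_2}) = \delta(b_{i_1}, b_{i_2})$; since two ancestors of $b_{i_2}$ sitting at the same level of $T_{[2^N]}$ must coincide, we deduce $a(b_{i_1}, b_{i_2}) = a(b_{i_1 - 1}, b_{i_2})$, which exhibits $b_{i_1 - 1} \in X \setminus I_1$ as a descendant of $a(b_{i_1}, b_{i_2})$ and violates condition (ii*) of closedness. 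The case $i_1 = 1$ is analogous, with $z$ playing the role of $b_{i_1 - 1}$; the right comb case is symmetric via (b3*), and broken combs satisfy $B_2 = \emptyset$. Consequently, we must have $A_2 \subseteq I_1$, and since $I_1$ is a consecutive interval of $X$ lying inside $A_2 \cup B_2$, the set $B_1 := I_1 \cap B_2$ is forced to be the initial segment of $B_2$ adjacent to $A_2$, yielding case (3).

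For the moreover statement, suppose case (3) holds with $B_1 \subsetneq B_2$ and that $I_1$ is itself a maximal comb. When $I_2$ is a left comb, condition (a3*) shows that $I_1 \cup \{x_{q_1 + 1}\} = A_2 \cup \{b_1, \ldots, b_{|B_1|+1}\}$ is closed, contradicting $I_1$ being a maximal left or broken comb; moreover, the strictly decreasing $\delta$-values within $I_1$ inherited from the structure of $I_2$ prevent $I_1$ from being a right comb. The case when $I_2$ is a right comb is symmetric via (b3*). The main obstacle throughout is the sub-case $I_1 \subseteq B_2$: one has to verify carefully that the monotone $\delta$-structure of the teeth drags an adjacent outside vertex into $X(a(b_{i_1}, b_{i_2}))$, thereby destroying closedness. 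Once this is established, the rest follows by careful bookkeeping with Proposition \ref{prop:closed} and the nested-closed-interval reformulations.
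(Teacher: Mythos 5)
Your proof is correct and follows the same broad outline as the paper's: invoke Proposition~\ref{prop:closed} to reduce to the case $I_1 \subseteq I_2$, then analyze how $I_1$ sits relative to the handle $A_2$ and teeth $B_2$. The key technical step is handled differently, though. The paper proves $A_2 \subseteq I_1$ directly whenever $I_1 \cap B_2 \neq \emptyset$, by invoking condition (a3*) and a transfer-of-common-ancestor argument (essentially the idea later isolated as Fact~\ref{fact:closedanc}). You instead apply Proposition~\ref{prop:closed} a second time to the pair $(I_1, A_2)$ to isolate the sub-case $I_1 \subsetneq B_2$, and then rule it out via the strict monotonicity of the $\delta$-sequence across the teeth combined with the binary-tree identity $\delta(y_1,y_t)=\min_{1\leq j\leq t-1}\delta(y_j,y_{j+1})$. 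That identity is not stated in the paper but is standard, and your argument is a clean, more hands-on alternative that avoids appealing to (a3*) in this step.

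Two smaller remarks. First, your restriction to $|I_1|\geq 2$ is the right instinct: the paper implicitly makes the same assumption (its choice $\min(I_1)\in A_2\cup\{y_1,\dots,y_{q-1}\}$ requires it), and singleton $I_1$ never arises in the proposition's applications. Second, your treatment of the ``moreover'' clause is actually more careful than the paper's: the paper only observes that $I_1\cup\{y_{q+1}\}$ is closed, which rules out $I_1$ being a maximal left or broken comb but says nothing about $I_1$ being a maximal right comb, whereas you at least indicate why the decreasing $\delta$-values forbid that. It would strengthen the write-up to make that last point precise --- for instance, the global minimum of the $\delta$-sequence on $I_1$ sits between its last two elements, and closedness of $A_2$ forces any candidate right-handle $A_1'$ with $|A_1'|<|I_1|$ to fail condition (ii) of Definition~\ref{def:closed} --- but as a sketch your reasoning is sound.
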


\begin{proof}
By Proposition \ref{prop:closed} we obtain that either $I_1\cap I_2=\emptyset$ or $I_1\subseteq I_2$. If the first case happens, then $I_1$ and $I_2$ satisfy condition (1) and we are done. Hence, we may assume that $I_1\subseteq I_2$. If $I_2$ is a broken comb, then by definition $A_2=I_2$. Thus in this case $I_1\subseteq A_2$, satsifying condition (2). Now suppose without loss of generality that $I_2=A_2\cup B_2$ is a left maximal comb and write $A_2=\{x_1,\ldots, x_{\ell}\}$, $B_2=\{y_1,\ldots, y_s\}$. If $I_1\cap B_2=\emptyset$, then $I_1\subseteq A_2$ and again condition (2) holds.

At last, it remains to deal with the case that $I_1\cap B_2\neq \emptyset$. Since $I_1$ is an interval of $X$ and $I_1\subseteq I_2$, then in particular $I_1$ is an interval of $I_2$. Write $I_1=\{x_p,\ldots,x_\ell\}\cup \{y_1,\ldots, y_q\}$ for $1\leq p \leq \ell$ and $1\leq q \leq s$. By condition (a3*) of Definition \ref{def:comb}, the set $A_2\cup\{y_1,\ldots,y_{q-1}\}$ is closed. Therefore for any $z \in A_2\cup\{y_1,\ldots,y_{q-1}\}$ the greatest ancestor $a(z,y_q)$ of $z$ and $y_q$ is the same as the greatest ancestor of $a(x_1,y_q)$. In particular, this implies that $a(x_p,y_q)$ is an ancestor for the entire set $A_2$. Hence $A_2\subseteq I_1$ and consequently $I_1=A_1\cup B_1$ is a left comb satisfying condition (3), because $A_1=A_2$ and $B_1$ is an initial segment of $B_2$. Note that $I_1$ is not maximal in this case, since the set $I_1\cup\{y_{q+1}\}$ is also a left comb. Thus if $I_1$ is a maximal comb, then it either satisfies (1) or (2).
\end{proof}

\section{Stepping-up lemma and our coloring}\label{sec:coloring}

\subsection{Erd\H{o}s--Hajnal--Rado stepping-up lemma}\label{subsec:ehrstep}

For instructional purposes, we will briefly go over the stepping-up lemma in \cites{EHR65, GRS13} using our notation. For $k\geq 4$, let $N=R_{k-1}((n-k+4)/2)-1$ and $\phi: [N]^{(k-1)} \rightarrow \{0,1\}$ be a coloring of the $(k-1)$-tuples in $[N]$ with no monochromatic subset of size $(n-k+4)/2$. Our goal is to find a coloring $\psi: [2^N]^{(k)}\rightarrow \{0,1\}$ with no monochromatic subset of size $n$. This will give us that $R_k(n)> 2^N=2^{R_{k-1}((n-k+4)/2)-1}$. 

Fix an edge $X=\{x_1,\ldots,x_k\} \in [2^N]^{(k)}$ and let $\delta_i=\delta(x_i,x_{i+1})$. We describe the coloring $\psi$ by the structure of $T_X$ and the coloring of the vertical projection $\phi$ of $[N]$ in the following way
\begin{align*}
    \psi(X)=\begin{cases}
    0,& \quad \text{if }\delta_{k-3}>\delta_{k-2}<\delta_{k-1}\\
    1,& \quad \text{if }\delta_{k-3}<\delta_{k-2}>\delta_{k-1}\\
    \phi(\{\delta_1,\ldots,\delta_{k-1}\}),& \quad \text{otherwise if $|\delta(X)|=k-1$}\\
    0,& \quad \text{otherwise if $|\delta(X)|<k-1$.}
    \end{cases}
\end{align*}

Suppose by contradiction that $\psi$ contains a monochromatic subset $Y\subset [2^N]$ of size $n$. We can use the structure of $Y$ to find a large $1$-comb.

\begin{proposition}\label{prop:findcomb}
There exists an interval $I$ of $Y$ with $|I|\geq (n-k+6)/2$ such that $I$ is a $1$-comb
\end{proposition}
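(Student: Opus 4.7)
The plan is to analyze the sequence $e_i:=\delta(y_i,y_{i+1})$ for $1\leq i\leq n-1$ associated with $Y=\{y_1<\cdots<y_n\}$ and to show that on the window $i\in[k-3,n-1]$ this sequence is strictly V-shaped if $Y$ has color $0$, and strictly $\Lambda$-shaped if $Y$ has color $1$. From such a shape a sufficiently long strictly monotone consecutive run --- equivalently a $1$-comb by Example~\ref{ex:1comb} --- will be extracted by pigeonhole.

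First I would establish a purely structural fact, independent of the coloring: for any $Y\subseteq[2^N]$ one has $e_i\neq e_{i+1}$ for every $i$. Indeed, if $e_i=e_{i+1}=c$, then both $a(y_i,y_{i+1})$ and $a(y_{i+1},y_{i+2})$ are ancestors of $y_{i+1}$ at level $c$, and therefore coincide with the unique level-$c$ ancestor $u$ of $y_{i+1}$. By definition of greatest common ancestor, $y_i,y_{i+1}$ lie in different children of $u$, and so do $y_{i+1},y_{i+2}$; since $u$ has only two children, $y_i$ and $y_{i+2}$ must lie in the same child as each other, namely the sibling of the child containing $y_{i+1}$, which violates $y_i<y_{i+1}<y_{i+2}$. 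This fact will later upgrade a weak V-shape into a strict one.

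Next I would use $\psi$-monochromaticity. Assuming $Y$ has color $0$, for each $b\in[k-2,n-2]$ I test the $k$-tuple
\[
X_b=\{y_1,y_2,\ldots,y_{k-4},\,y_{b-1},y_b,y_{b+1},y_{b+2}\},
\]
whose last three $\delta$-values are exactly $(\delta_{k-3},\delta_{k-2},\delta_{k-1})=(e_{b-1},e_b,e_{b+1})$. Since $\psi(X_b)=0$, the pattern $e_{b-1}<e_b>e_{b+1}$ is forbidden, so the window $e_{k-3},e_{k-2},\ldots,e_{n-1}$ has no strict interior peak. Combining this with the distinctness of adjacent $e$'s yields the propagation rule ``$e_{b-1}<e_b$ forces $e_b<e_{b+1}$'', so strict ascents persist to the end of the window. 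Hence there is $i^*\in[k-3,n-1]$ with
\[
e_{k-3}>e_{k-2}>\cdots>e_{i^*}<e_{i^*+1}<\cdots<e_{n-1}.
\]
The color-$1$ case is handled analogously, with valleys replacing peaks and a strictly $\Lambda$-shaped window.

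The two strictly monotone runs of this V-shape correspond to the consecutive subsets $\{y_{k-3},\ldots,y_{i^*+1}\}$ and $\{y_{i^*},\ldots,y_n\}$ of $Y$, of sizes $i^*-k+5$ and $n-i^*+1$ that sum to $n-k+6$; the larger one is the required $1$-comb of size at least $\lceil(n-k+6)/2\rceil$. The main technical hurdle is the plateau problem: the coloring alone forbids only \emph{strict} peaks or valleys, so without the preliminary structural fact $e_i\neq e_{i+1}$ the argument would deliver only a weakly monotone interval, which is not a $1$-comb in the sense of Definition~\ref{def:comb}.
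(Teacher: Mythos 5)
Your proof is correct and follows essentially the same route as the paper: use $\psi$-monochromaticity to forbid strict interior peaks in the $\delta$-sequence over the window $[k-3,n-1]$, conclude that it is V-shaped, and take the longer monotone arm as the $1$-comb of size at least $(n-k+6)/2$. Your preliminary observation that consecutive $\delta$-values are always distinct is a welcome piece of extra rigor --- it is exactly what upgrades the weakly V-shaped conclusion to a \emph{strictly} monotone run, which the $1$-comb definition (Example~\ref{ex:1comb}) requires and which the paper leaves implicit when asserting that the extracted interval is ``monotone''.
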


\begin{proof}
We may assume without loss of generality that $Y$ is monochromatic of color $0$. Write $Y=\{y_1,\ldots,y_n\}$ and let $\delta^Y_i=\delta(y_i,y_{i+1})$ for $1\leq i \leq n-1$. Since all edges in $Y$ are of color $0$, then for any edge $X=\{x_1,\ldots,x_k\}\in Y^{(k)}$ we do not have that
\begin{align}\label{eq:stepup}
    \delta(x_{k-3},x_{k-2})<\delta(x_{k-2},x_{k-1})>\delta(x_{k-1},x_k).
\end{align}
In particular, by taking the edge $\{y_{\ell-k+1},\ldots,y_\ell\}$, inequality (\ref{eq:stepup}) implies that $\delta^Y_{\ell-3}<\delta^Y_{\ell-2}>\delta^Y_{\ell-1}$ does not hold fore every $k\leq \ell \leq n$. Hence, the sequence $\{\delta^Y_i\}_{i=k-3}^{n-1}$ has no local maximum.

A standard calculus argument says that between two local minimums there is always a local maximum. Therefore, the sequence $\{\delta^Y_i\}_{i=k-3}^{n-1}$ has at most one local minimum, which means that there exists an interval $[p,q]$ of size $(n-k+4)/2$ such that $\{\delta^Y_i\}_{i\in [p,q]}$ is monotone. Thus by definition the interval $I=\{x_p,x_{p+1},\ldots,x_{q+1}\}$ is a $1$-comb of size $(n-k+6)/2$.
\end{proof}

Let $I=\{z_1,\ldots,z_{t}\}$ be the $1$-comb of $Y$ obtained by Proposition \ref{prop:findcomb} and denote $\delta_i^I=\delta(z_i,z_{i+1})$ for $1\leq i \leq i-1$. Note that because $\{\delta_i^I\}_{i=1}^{t-1}$ is a monotone sequence, every edge $X\in I^{(k)}$ will be also a $1$-comb. Moreover, for every $(k-1)$-tuple $Z\in \delta(I)$ there exists an edge $X\in I^{(k)}$ such that $\delta(X)=Z$.

Finally, by the definition of the coloring $\psi$, if $X$ is a $1$-comb, then $\psi(X)=\phi(\delta(X))$. Thus if $I^{(k)}$ colored by $\psi$ is monochromatic, then $\delta(I)^{(k-1)}$ colored by $\phi$ is also monochromatic. This implies that $[N]$ has a monochromatic set of size $(n-k+4)/2$, which contradicts our assumption on $\phi$.

\subsection{Overview of the proof}\label{subsec:overview}

In order to obtain a lower bound for simple daisies, we will define a variant of the stepping-up lemma described in the previous subsection. Suppose for a moment that our goal is to avoid a monochromatic simple $(r,m,k)$-daisy with in $[2^N]$ with $|K_0|=k_0$ and $|K_1|=k_1$ fixed. Then for every edge $X=\{x_1,\ldots,x_{k+r}\}$ of the daisy, we know that the petal of size $r$ of $X$ is $P=\{x_{k_0+1},\ldots,x_{k_0+r}\}$. That is, we know the exact location of the petal prior defining the coloring in our stepping-up lemma. In this case a natural way to define the coloring would be to just assign for every edge $X$ with petal $P\subseteq X$ the color $\chi(X)=\psi(P)$, where $\psi(P)$ is exactly the stepping-up coloring defined in the previous subsection. Since the petal is the only part of the edge changing when we run through all edges, a similar proof as in the previous subsection works.

Unfortunately, in the original problem we want to avoid all possible monochromatic simple $(r,m,k)$-daisies, which means that we need to avoid simple daisies for all the values of $|K_0|$ and $|K_1|$. The obstruction now is that the location of the petal within the edge is no longer clear to us. To fix that we are going to pre-process our potentially monochromatic simple daisy (Lemma \ref{lem:preprocessing}) to satisfy the following property: Every petal $P$ of an edge $X$ is either a closed interval in $X$ or is in the ``teeth" of a maximal comb in $X$. This gives us partial information about the location of the petal. A good strategy then is to define an auxiliary coloring $\chi_0$ for every maximal comb in $X$ and use those colorings to define a coloring for $X$. This is the content of Section \ref{subsec:variant}.

Some technical challenges remain. By Proposition \ref{prop:maximalcomb}, the maximal combs in $X$ do not need to be disjoint. Therefore, it might happen that for the coloring $\chi$ different maximal combs interfere with each other. To solve that we need to construct a careful coloring taking the issue into consideration. In Section \ref{sec:info} we provide an analysis showing that distinct maximal combs do not interfere with each other in our coloring. Section \ref{sec:preprocessing} is devoted to the pre-processing described in the last paragraph. One of the consequences of the section is that for an edge $X$ the coloring $\chi(X)$ is essentially determined by a unique maximal comb inside of it. Finally, we finish the proof in Section \ref{sec:proof}, by showing, similarly as in Subsection \ref{subsec:ehrstep}, that a monochromatic simple daisy in $[2^N]$ corresponds to a monochromatic simple daisy in the vertical coloring of $[N]$.

\subsection{A variant of the stepping-up lemma}\label{subsec:variant}

Let $N=\min_{0\leq t \leq k-1}\left\{D_{r-1}^{\smp}(c_k\sqrt{m},t)-1\right\}$ for $r\geq 4$ and $c_k$ some constant depending on $k$ to be defined later and let
$\{\phi_i\}_{r-1\leq i \leq k+r-1}$ be a family of colorings such that $\phi_i:[N]^{(i)}\rightarrow\{0,1\}$ is a $2$-coloring of the $i$-tuples without a monochromatic simple $(r-1,c_k\sqrt{m},i-r+1)$-daisy. Note that by the choice of $N$ is always possible to find such a family.

Given an $(k+r)$-tuple $X \in [2^N]^{(k+r)}$ we define 
\begin{align*}
    \cI_X=\{I\subseteq X:\: \text{$I$ is a maximal comb in $X$}\}
\end{align*}
as the set of maximal combs of $X$. We will construct now an auxiliary coloring $\chi_0: \cI_X \rightarrow \{0,1\}$ depending on the structure of $T_X$ and in the family of colorings $\{\phi_t\}_{0\leq t \leq k}$. The coloring is divided in several cases depending on the type of the maximal comb $I$.

Remember that a maximal $\ell$-comb is always identified with the partition $I=A\cup B$, where $|A|=\ell$ is the handle and $B$ is the set of teeth of the comb. Also write $I=\{x_1,\ldots,x_s\}$ and let $\delta_{i}^I=\delta(x_i,x_{i+1})$ for $1\leq i \leq s-1$. Aiming to simplify the discussion, we will only describe $\chi_0$ for left and broken maximal combs. We define $\chi_0$ for right combs by symmetry.

\noindent \underline{Type 1:} $I$ is broken or left comb, $|I|=r$ and there is no maximal comb $I'=A'\cup B'$ such that $I=A'$.
\begin{align*}
    \chi_0(I)=\begin{cases}
    0,& \quad \text{if }\delta^I_{r-3}>\delta^I_{r-2}<\delta^I_{r-1}\\
    1,& \quad \text{if }\delta^I_{r-3}<\delta^I_{r-2}>\delta^I_{r-1}\\
    \phi_{|\delta(I)|}(\{\delta^I_1,\ldots,\delta^I_{r-1}\}),& \quad \text{otherwise if $|\delta(I)|=r-1$}\\
    0,& \quad \text{otherwise if $|\delta(I)|<r-1$}
    \end{cases}
\end{align*}

\vspace{0.2cm}

\noindent \underline{Type 2:} $I$ is left comb, $|I|=r$ and there exists a maximal comb $I'=A'\cup B'$ such that $I=A'$ 
\begin{align*}
    \chi_0(I)=0
\end{align*}

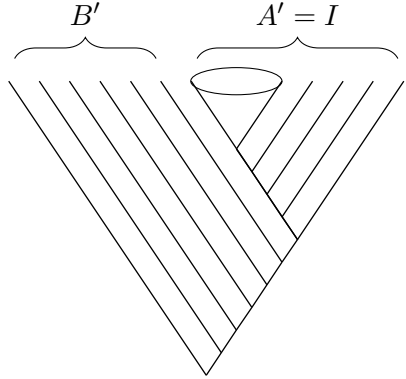
\begin{figure}[h]
\centering
{\hfil \begin{tikzpicture}[scale=0.6]
    
 	\draw (0.66,6.5) ellipse (1 and 0.3);
 	
 	\coordinate (T0) at (-4.33,6.5);
 	\coordinate (T1) at (-3.66,6.5);
 	\coordinate (T2) at (-3,6.5);
 	\coordinate (T3) at (-2.33,6.5);
 	\coordinate (T4) at (-1.66,6.5);
 	\coordinate (T5) at (-1,6.5);
 	\coordinate (T6) at (-0.33,6.5);
 	\coordinate (T10) at (1.66,6.5);
 	\coordinate (T9) at (2.33,6.5);
 	\coordinate (T8) at (3,6.5);
 	\coordinate (T7) at (3.66,6.5);
 	\coordinate (T11) at (4.33,6.5);
    \coordinate (R10) at (0.66,5);
    \coordinate (R9) at (1,4.5);
    \coordinate (R8) at (1.33,4);
    \coordinate (R7) at (1.66,3.5);
    \coordinate (R6) at (2,3);
    \coordinate (R5) at (1.66,2.5);
    \coordinate (R4) at (1.33,2);
    \coordinate (R3) at (1,1.5);
    \coordinate (R2) at (0.66,1);
    \coordinate (R1) at (0.33,0.5);
    \coordinate (R0) at (0,0);
    \node (q1) at (-2.65,7.5) [above,font=\small] {$B'$};
	\node (q2) at (2,7.5) [above,font=\small] {$A'=I$};
    
    \draw [decorate,
    decoration = {brace, amplitude=8pt}] (-4.2,7) --  (-1.1,7);
    \draw [decorate,
    decoration = {brace, amplitude=8pt}] (-0.2,7) --  (4.2,7);
    
    \draw[line width=0.5] (R0)--(R1)--(R2)--(R3)--(R4)--(R5);
    \draw[line width=0.5] (R5)--(R6)--(R7)--(R8)--(R9)--(R10);
    \draw[line width=0.5] (T0)--(R0);
    \draw[line width=0.5] (T1)--(R1);
    \draw[line width=0.5] (T2)--(R2);
    \draw[line width=0.5] (T3)--(R3);
    \draw[line width=0.5] (T4)--(R4);
    \draw[line width=0.5] (T5)--(R5);
    \draw[line width=0.5] (T6)--(R6);
    \draw[line width=0.5] (T7)--(R7);
    \draw[line width=0.5] (T8)--(R8);
    \draw[line width=0.5] (T9)--(R9);
    \draw[line width=0.5] (T10)--(R10);
    \draw[line width=0.5] (T11)--(R6);
    
    \end{tikzpicture}\hfil}
    \caption{An example of left comb of type 2.}
    \label{fig:type2}
\end{figure}

\vspace{0.2cm}

\noindent \underline{Type 3:} $I$ is left comb, $\ell=|A|\leq r$ and $r+1\leq |I|\leq 2r-2$.
\begin{align*}
    \chi_0(I)=\begin{cases}
    0,& \quad \text{if }\delta^I_{r-3}>\delta^I_{r-2}<\delta^I_{r-1}\\
    1,& \quad \text{if }\delta^I_{r-3}<\delta^I_{r-2}>\delta^I_{r-1}\\
    \phi_{|\delta(I)|}(\{\delta^I_1,\ldots,\delta^I_{s-1}\}),& \quad \text{otherwise if $|\delta(I)|\geq r-1$}\\
    0,& \quad \text{otherwise if $|\delta(I)|<r-1$}
    \end{cases}
\end{align*}

\vspace{0.2cm}

\noindent \underline{Type 4:} $I$ is left comb, $\ell=|A|\leq r$ and $|I|\geq 2r-1$.
\begin{align*}
    \chi_0(I)=\begin{cases}
    \phi_{s-r}(\{\delta_r^I,\ldots,\delta_{s-1}^I\}),& \quad \text{if }\delta^I_{r-3}>\delta^I_{r-2}<\delta^I_{r-1}\\
    1-\phi_{s-r}(\{\delta_r^I,\ldots,\delta_{s-1}^I\}),& \quad \text{if }\delta^I_{r-3}<\delta^I_{r-2}>\delta^I_{r-1}\\
    \phi_{|\delta(I)|}(\{\delta^I_1,\ldots,\delta^I_{s-1}\}),& \quad \text{otherwise}
    \end{cases}
\end{align*}

\begin{figure}[h]
\centering
{\hfil \begin{tikzpicture}[scale=0.6]
    
 	\draw (-3.33,6.5) ellipse (1 and 0.3);
 	\draw (-7,1.5)[red] ellipse (0.7 and 2);
 	
 	\coordinate (T11) at (-4.33,6.5);
 	\coordinate (T10) at (-2.33,6.5);
 	\coordinate (T9) at (-1.66,6.5);
 	\coordinate (T8) at (-1,6.5);
 	\coordinate (T7) at (-0.33,6.5);
 	\coordinate (T6) at (0.33,6.5);
 	\coordinate (T5) at (1,6.5);
 	\coordinate (T4) at (1.66,6.5);
 	\coordinate (T3) at (2.33,6.5);
 	\coordinate (T2) at (3,6.5);
 	\coordinate (T1) at (3.66,6.5);
 	\coordinate (T0) at (4.33,6.5);
    \coordinate (R10) at (-3.33,5);
    \coordinate (R9) at (-3,4.5);
    \coordinate (R8) at (-2.66,4);
    \coordinate (R7) at (-2.33,3.5);
    \coordinate (R6) at (-2,3);
    \coordinate (R5) at (-1.66,2.5);
    \coordinate (R4) at (-1.33,2);
    \coordinate (R3) at (-1,1.5);
    \coordinate (R2) at (-0.66,1);
    \coordinate (R1) at (-0.33,0.5);
    \coordinate (R0) at (0,0);
    \coordinate (L11) at (-7,6.5);
    \coordinate (L10) at (-7,5);
    \coordinate (L9) at (-7,4.5);
    \coordinate (L8) at (-7,4);
    \coordinate (L7) at (-7,3.5);
    \coordinate (L6) at (-7,3);
    \coordinate (L5) at (-7,2.5);
    \coordinate (L4) at (-7,2);
    \coordinate (L3) at (-7,1.5);
    \coordinate (L2) at (-7,1);
    \coordinate (L1) at (-7,0.5);
    \coordinate (L0) at (-7,0);
    
    \node (q1) at (-3.33,6.8) [above,font=\small] {$A$};
	\node (q2) at (-2.3,8) [above,font=\small] {$\{x_1,\ldots,x_r\}$};
	\node (q3) at (2.3,8) [above,font=\small] {$\{x_{r+1},\ldots,x_s\}$};
	\node (q4) at (-8.4,3.25) [left,font=\small] {$\{\delta_1^{I},\ldots, \delta_{s-1}^{I}\}$};
	\node (q5) at (-7,-0.5) [below,font=\small] {$\{\delta_r^{I},\ldots,\delta_{s-1}^{I}\}$};
    
    \draw [decorate,
    decoration = {brace, amplitude=8pt}] (-4.2,7.5) --  (-0.4,7.5);
    \draw [decorate,
    decoration = {brace, amplitude=8pt}] (0.4,7.5) --  (4.2,7.5);
    \draw [decorate,
    decoration = {brace, amplitude=8pt}] (-8,0.1) --  (-8,6.4);
    
    \draw[line width=0.5] (R0)--(R1)--(R2)--(R3)--(R4)--(R5);
    \draw[line width=0.5] (R5)--(R6)--(R7)--(R8)--(R9)--(R10);
    \draw[line width=0.5] (T0)--(R0);
    \draw[line width=0.5] (T1)--(R1);
    \draw[line width=0.5] (T2)--(R2);
    \draw[line width=0.5] (T3)--(R3);
    \draw[line width=0.5] (T4)--(R4);
    \draw[line width=0.5] (T5)--(R5);
    \draw[line width=0.5] (T6)--(R6);
    \draw[line width=0.5] (T7)--(R7);
    \draw[line width=0.5] (T8)--(R8);
    \draw[line width=0.5] (T9)--(R9);
    \draw[line width=0.5] (T10)--(R10);
    \draw[line width=0.5] (T11)--(R6);
    \draw[line width=0.5] (L0)--(L11);
    \draw[line width=0.5][dashed] (L0)--(R0);
    \draw[line width=0.5][dashed] (L1)--(R1);
    \draw[line width=0.5][dashed] (L2)--(R2);
    \draw[line width=0.5][dashed] (L3)--(R3);
    \draw[line width=0.5][dashed] (L4)--(R4);
    \draw[line width=0.5][dashed] (L5)--(R5);
    \draw[line width=0.5][dashed] (L6)--(R6);
    
    \draw[fill][red] (R0) circle [radius=0.1];
 	\draw[fill][red] (R1) circle [radius=0.1];
 	\draw[fill][red] (R2) circle [radius=0.1];
 	\draw[fill][red] (R3) circle [radius=0.1];
 	\draw[fill][red] (R4) circle [radius=0.1];
 	\draw[fill][red] (R5) circle [radius=0.1];
 	\draw[fill][red] (R6) circle [radius=0.1];
 	\draw[fill][red] (L0) circle [radius=0.1];
 	\draw[fill][red] (L1) circle [radius=0.1];
 	\draw[fill][red] (L2) circle [radius=0.1];
 	\draw[fill][red] (L3) circle [radius=0.1];
 	\draw[fill][red] (L4) circle [radius=0.1];
 	\draw[fill][red] (L5) circle [radius=0.1];
 	\draw[fill][red] (L6) circle [radius=0.1];
    
    \end{tikzpicture}\hfil}
    \caption{A left comb of Type $4$ and its projections.}
    \label{fig:type4}
\end{figure}

\vspace{0.2cm}

\noindent \underline{Type 5:} $I$ is left comb, $\ell=|A|> r$ and $|B|\geq r$.
\begin{align*}
    \chi_0(I)=\phi_{|B|}(\delta_{\ell}^I,\ldots,\delta_{s-1}^I)
\end{align*}

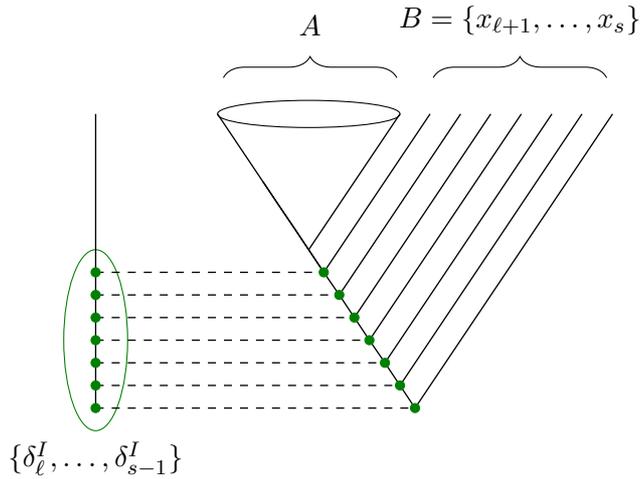
\begin{figure}[h]
\centering
{\hfil \begin{tikzpicture}[scale=0.6]
    
 	\draw (-2.33,6.5) ellipse (2 and 0.3);
 	\draw (-7,1.5)[green1] ellipse (0.7 and 2);
 	
 	\coordinate (T11) at (-4.33,6.5);
 	\coordinate (T10) at (-2.33,6.5);
 	\coordinate (T9) at (-1.66,6.5);
 	\coordinate (T8) at (-1,6.5);
 	\coordinate (T7) at (-0.33,6.5);
 	\coordinate (T6) at (0.33,6.5);
 	\coordinate (T5) at (1,6.5);
 	\coordinate (T4) at (1.66,6.5);
 	\coordinate (T3) at (2.33,6.5);
 	\coordinate (T2) at (3,6.5);
 	\coordinate (T1) at (3.66,6.5);
 	\coordinate (T0) at (4.33,6.5);
    \coordinate (R10) at (-3.33,5);
    \coordinate (R9) at (-3,4.5);
    \coordinate (R8) at (-2.66,4);
    \coordinate (R7) at (-2.33,3.5);
    \coordinate (R6) at (-2,3);
    \coordinate (R5) at (-1.66,2.5);
    \coordinate (R4) at (-1.33,2);
    \coordinate (R3) at (-1,1.5);
    \coordinate (R2) at (-0.66,1);
    \coordinate (R1) at (-0.33,0.5);
    \coordinate (R0) at (0,0);
    \coordinate (L11) at (-7,6.5);
    \coordinate (L10) at (-7,5);
    \coordinate (L9) at (-7,4.5);
    \coordinate (L8) at (-7,4);
    \coordinate (L7) at (-7,3.5);
    \coordinate (L6) at (-7,3);
    \coordinate (L5) at (-7,2.5);
    \coordinate (L4) at (-7,2);
    \coordinate (L3) at (-7,1.5);
    \coordinate (L2) at (-7,1);
    \coordinate (L1) at (-7,0.5);
    \coordinate (L0) at (-7,0);
    
	\node (q2) at (-2.3,8) [above,font=\small] {$A$};
	\node (q3) at (2.3,8) [above,font=\small] {$B=\{x_{\ell+1},\ldots,x_s\}$};
	\node (q5) at (-7,-0.5) [below,font=\small] {$\{\delta_\ell^{I},\ldots,\delta_{s-1}^{I}\}$};
    
    \draw [decorate,
    decoration = {brace, amplitude=8pt}] (-4.2,7.3) --  (-0.4,7.3);
    \draw [decorate,
    decoration = {brace, amplitude=8pt}] (0.4,7.3) --  (4.2,7.3);
    
    \draw[line width=0.5] (R0)--(R1)--(R2)--(R3)--(R4)--(R5);
    \draw[line width=0.5] (R5)--(R6)--(R7)--(R8)--(R9)--(R10);
    \draw[line width=0.5] (T0)--(R0);
    \draw[line width=0.5] (T1)--(R1);
    \draw[line width=0.5] (T2)--(R2);
    \draw[line width=0.5] (T3)--(R3);
    \draw[line width=0.5] (T4)--(R4);
    \draw[line width=0.5] (T5)--(R5);
    \draw[line width=0.5] (T6)--(R6);
    \draw[line width=0.5] (T7)--(R7);
    \draw[line width=0.5] (T11)--(R6);
    \draw[line width=0.5] (L0)--(L11);
    \draw[line width=0.5][dashed] (L0)--(R0);
    \draw[line width=0.5][dashed] (L1)--(R1);
    \draw[line width=0.5][dashed] (L2)--(R2);
    \draw[line width=0.5][dashed] (L3)--(R3);
    \draw[line width=0.5][dashed] (L4)--(R4);
    \draw[line width=0.5][dashed] (L5)--(R5);
    \draw[line width=0.5][dashed] (L6)--(R6);
    
    \draw[fill][green1] (R0) circle [radius=0.1];
 	\draw[fill][green1] (R1) circle [radius=0.1];
 	\draw[fill][green1] (R2) circle [radius=0.1];
 	\draw[fill][green1] (R3) circle [radius=0.1];
 	\draw[fill][green1] (R4) circle [radius=0.1];
 	\draw[fill][green1] (R5) circle [radius=0.1];
 	\draw[fill][green1] (R6) circle [radius=0.1];
 	\draw[fill][green1] (L0) circle [radius=0.1];
 	\draw[fill][green1] (L1) circle [radius=0.1];
 	\draw[fill][green1] (L2) circle [radius=0.1];
 	\draw[fill][green1] (L3) circle [radius=0.1];
 	\draw[fill][green1] (L4) circle [radius=0.1];
 	\draw[fill][green1] (L5) circle [radius=0.1];
 	\draw[fill][green1] (L6) circle [radius=0.1];
    
    \end{tikzpicture}\hfil}
    \caption{A left comb of Type $5$ and its projections.}
    \label{fig:type5}
\end{figure}

\vspace{0.2cm}

\noindent \underline{Type 6:} All other broken or left maximal combs.
\begin{align*}
    \chi_0(I)=0
\end{align*}

Finally, the auxiliary coloring $\chi_0$ define a coloring $\chi:[2^N]^{(k+r)}\rightarrow \{0,1\}$ as follows:
\begin{align*}
    \chi(X)=\sum_{I\in \cI_X}\chi_0(I) \pmod{2}
\end{align*}

\section{Coloring data}\label{sec:info}

Given an edge $X$ and a maximal comb $I \subseteq X$, one can determine the color $\chi_0(I)$ by looking at the type of the maximal comb $I$. Some of the types do not use information on the ancestors to determine its coloring. For instance, if $I$ is of type $2$, then its color will be always $0$. The projection of the ancestors $\delta_i^I$ has no influence in defining $\chi_0(I)$. However, if $I$ is of type $1$, then the color crucially depends on the projection of the ancestors. 

This observation suggests the following definition. Given an edge $X \in [2^N]^{(k+r)}$ and a maximal comb $I\subseteq X$, let the \textit{coloring data} $F(I)$ of $I$ be defined as the ordered set of ancestors whose projection determine the coloring $\chi_0(I)$. More explicitly, we can define directly the coloring data of $I$ by looking its types. We may assume here that $I=\{x_1,\ldots,x_s\}$ is a broken or left maximal comb.

\begin{itemize}
    \item Type 1,3 and 4: $F(I)=\{a(x_i,x_{i+1})\}_{1\leq i \leq s-1}$
    \item Type 2 and 6: $F(I)=\emptyset$
    \item Type 5: $F(I)=\{a(x_i,x_{i+1})\}_{\ell \leq i \leq s-1}$
\end{itemize} 

Our first observation is that maximal combs with same data have same color. We say that two combs have the same \textit{orientation} if they are of the same class (e.g., both are left combs).

\begin{proposition}\label{prop:equalinfo}
Let $X$, $X' \in [2^N]^{(k+r)}$ be two edges. If $I$ and $I'$ are maximal combs of same type and orientation in $X$ and $X'$, respectively, such that $F(I)=F(I')$, then $\chi_0(I)=\chi_0(I')$.
\end{proposition}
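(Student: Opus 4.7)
The plan is to do a direct case analysis over the six types of maximal comb defined for $\chi_0$, restricted to left and broken combs (right combs being handled by reflection). The guiding principle is that the definition of $\chi_0(I)$ was engineered so that, after fixing the type, the color depends only on the sequence of projections $\pi$ applied to the ancestors that appear in $F(I)$, together with their cardinality; all the auxiliary quantities that enter the formulas (the values $\delta_i^I$, the cardinality $|\delta(I)|$, the length $s$, the handle size $\ell$, or the tooth count $|B|$) can be recovered from this data alone.

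First I would dispose of the trivial Types 2 and 6, where $\chi_0(I)=0$ unconditionally, so the claim is vacuous. Next, for Types 1, 3, and 4, the coloring data is $F(I)=(a(x_i,x_{i+1}):1\le i\le s-1)$, an ordered list whose componentwise projection under $\pi$ recovers the sequence $(\delta_1^I,\ldots,\delta_{s-1}^I)$. Inspection of the defining cases shows that each branch depends only on (i) the strict inequalities between $\delta_{r-3}^I,\delta_{r-2}^I,\delta_{r-1}^I$ used to select a sign branch, (ii) the cardinality $|\delta(I)|$ used as a subscript on $\phi$, and (iii) the set $\{\delta_1^I,\ldots,\delta_{s-1}^I\}$ (or $\{\delta_r^I,\ldots,\delta_{s-1}^I\}$ in Type 4) fed into the corresponding $\phi_j$. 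Since $|F(I)|=s-1$ and $\delta_i^I=\pi(a(x_i,x_{i+1}))$, every one of these ingredients is a function of the ordered list $F(I)$. Hence $F(I)=F(I')$ forces the same branch of the definition and the same input to $\phi_j$, giving $\chi_0(I)=\chi_0(I')$.

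The subtlest case is Type 5, where $F(I)=(a(x_i,x_{i+1}):\ell\le i\le s-1)$ has cardinality $|B|=s-\ell$, and the coloring evaluates $\phi_{|B|}(\delta_\ell^I,\ldots,\delta_{s-1}^I)$. Here one must check that the subscript $|B|$ on $\phi$ is itself recoverable from $F(I)$; it is, because $|B|=|F(I)|$ by direct count. The argument of $\phi_{|B|}$ is $\pi$ applied componentwise to $F(I)$, so $F(I)=F(I')$ again forces the same coloring applied to the same input. I do not anticipate any real obstacle: the proposition is essentially a bookkeeping verification that the case-split in the definition of $\chi_0$ uses nothing beyond the type of $I$ and the list $F(I)$.
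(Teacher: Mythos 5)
Your proposal is correct and follows essentially the same route as the paper: a case analysis by comb type, showing that the ordered list $F(I)$ determines the sequence $(\delta_1^I,\ldots,\delta_{s-1}^I)$ (respectively the tail of it for Type 5) together with the parameters $s$, $|\delta(I)|$, and $|B|$, which are the only ingredients the definition of $\chi_0$ ever consults. The only cosmetic difference is that the paper recovers the index-by-index correspondence between $I$ and $I'$ by invoking Fact~\ref{fact:shape}, whereas you read it off directly from the stipulation that $F(I)$ is an ordered set; both are fine since the definition of $F(I)$ already fixes the ordering.
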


\begin{proof}
The proof basically consists of checking the consistency of our definition. If $F(I)=F(I')=\emptyset$, then $I$ and $I'$ are either of type $2$ or $6$. In both cases $\chi_0(I)=\chi_0(I')=0$. 

If $I=\{x_1,\ldots,x_s\}$ and $I'=\{x'_1,\ldots,x'_{s'}\}$ are of type $1$, $3$ or $4$, then since $a(I)=F(I)=F(I')=a(I')$ we obtain by Fact \ref{fact:shape} that $s=s'$ and $a(x_i,x_{i+1})=a(x'_i,x'_{i+1})$ for every $1\leq i \leq s$. Therefore $\delta(x_i,x_{i+1})=\delta(x'_i,x'_{i+1})$ for every $1\leq i \leq s$ and by the coloring defined in Section \ref{subsec:variant}, it follows that $\chi_0(I)=\chi_0(I')$.

The last case that we need to check is when $I=A\cup B=\{x_1,\ldots,x_{s}\}$ and $I'=A'\cup B'=\{x'_1,\ldots,x'_{s'}\}$ are of type $5$, where $|A|=\ell$ and $|A'|=\ell'$. As usual, we assume that $I$ and $I'$ are left combs. Since $a(\{x_\ell,\ldots,x_s\})=F(I)=F(I')=a(\{x'_{\ell'},\ldots,x'_{s'}\})$, it follows again by Fact \ref{fact:shape} that $s-\ell=|B|=|B'|=s'-\ell'$ and $a(x_i,x_{i+1})=a(x'_i,x'_{i+1})$ for $\ell\leq i\leq s-1$. Thus $\delta(x_i,x_{i+1})=\delta(x'_i,x'_{i+1})$ for $\ell\leq i \leq s-1$ and by the coloring of type 5 we obtain that $\chi_0(I)=\chi_0(I')$.
\end{proof}

Although maximal combs in the same edge do not need to be disjoint, the next result shows that they do not share the same coloring data.

\begin{proposition}\label{prop:uniqueinfo}
Let $X \in [2^N]^{(k+r)}$ be an edge. If $I=A\cup B$ and $I'=A'\cup B'$ are maximal combs in $X$, then $F(I)\cap F(I')=\emptyset$.
\end{proposition}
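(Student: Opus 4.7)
The plan is to apply Proposition \ref{prop:maximalcomb} and then argue by the types of $I$ and $I'$. Since Types 2 and 6 have empty coloring data by definition, I may assume from the outset that both $I$ and $I'$ are of Types 1, 3, 4, or 5 (so that $F(I)$ and $F(I')$ are both nonempty), and that $I\ne I'$, as the statement is meant for distinct maximal combs. Taking $|I|\le|I'|$ without loss of generality, Proposition \ref{prop:maximalcomb} yields either $I\cap I'=\emptyset$ or $I\subseteq A'$; outcome (3) is ruled out since $I$ itself is a maximal comb.

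In the disjoint case the argument is short. Every element of $F(I)$ has the form $a(x_p,x_{p+1})$ for a consecutive pair of $X$ lying inside $I$, and likewise for $F(I')$ with pairs inside $I'$. The map sending a consecutive pair of $X$ to its least common ancestor is a bijection onto $a(X)$---this follows from the binary structure of $T_X$ together with Fact \ref{fact:shape}, since there are $|X|-1$ consecutive pairs and $|a(X)|=|X|-1$. Hence disjointness of the two sets of consecutive pairs translates into disjointness of the corresponding ancestor sets.

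For the nested case $I\subseteq A'$ (treating left or broken $I'$, with the right variant handled symmetrically), I will split on the type of $I'$. If $I'$ is of Type 1, 3, or 4, then the defining constraints force $|A'|\le r$, hence $|I|\le r$. But any type with nonempty data requires $|I|\ge r$, so $|I|=r=|A'|$ and $I=A'$; this makes $I$ a handle of the maximal comb $I'$, contradicting the ``not a handle'' clause of Type 1 as well as the size condition $|I|\ge r+1$ of Types 3, 4, 5. Consequently $I$ must be of Type 2 or 6, contradicting $F(I)\ne\emptyset$. If instead $I'$ is of Type 5, then by definition $F(I')$ consists of ancestors corresponding to consecutive pairs of $X$ whose $I'$-index is at least $|A'|$, whereas $F(I)$ only involves pairs inside $I\subseteq A'$, hence with $I'$-index at most $|A'|-1$; bijectivity of the pair-to-ancestor map then gives $F(I)\cap F(I')=\emptyset$.

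The main obstacle I anticipate is the bookkeeping inside the subcase $I=A'$: one must verify across all combinations of left/right/broken classifications for $I$ and $I'$ that being a handle of $I'$, together with the size constraints in the type definitions, truly forces $I$ into a type with empty coloring data. Once this is settled, the remainder is an indexing argument built on the bijection between consecutive pairs of $X$ and vertices of $a(X)$.
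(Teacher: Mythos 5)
Your proposal is correct and follows essentially the same route as the paper: apply Proposition~\ref{prop:maximalcomb} to split into the disjoint case and the nested case $I\subseteq A'$, rule out $I'$ being of Type 1, 3, or 4 via the size constraint $|A'|\le r$ together with the Type~2 contradiction, and then conclude from the Type~5 structure. The only cosmetic difference is in the final step: the paper compares projection levels directly through the monotone chain $\delta(x'_1,x'_{\ell'})>\delta(x'_{\ell'},x'_{\ell'+1})>\cdots>\delta(x'_{s'-1},x'_{s'})$, while you invoke the bijection between consecutive pairs of $X$ and the non-leaves of $T_X$; both are sound.
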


\begin{proof}
Suppose without loss of generality that $|I|\leq |I'|$. By Proposition \ref{prop:maximalcomb} either $I\cap I'=\emptyset$ or $I\subseteq A'$. If $I\cap I'=\emptyset$, then by the fact that $I, I'$ are closed we obtain that $a(I)\cap a(I')=\emptyset$. Since $F(I)\subseteq a(I)$ by definition, it follows that $F(I)\cap F(I')=\emptyset$.

Now suppose that $I\subseteq A'$. We may assume that $F(I), F(I')\neq \emptyset$ and consequently that $I,I'$ are of type $1$, $3$, $4$ or $5$. Since maximal combs of types $1$, $3$, $4$ and $5$ have size at least $r$, our assumption implies that $|I'|\geq |I|\geq r$. 

We claim that $|A'|>r$. Suppose that $|A'|=r$. Since $r\leq |I|\leq |A'|$ we obtain that $I=A'$ and $|I|=r$. The maximality of $I$ implies that it is either a left or right maximal comb (otherwise we could extend the comb to $I\cup B$). However, in this case $I$ is of type $2$. Thus $F(I)=\emptyset$, which contradicts our assumption on $I$. Therefore $|A'|>r$ and consequently $I'$ is of type $5$. Write $I'=\{x'_1,\ldots,x'_{s'}\}$ with $|A'|=\ell'$ and assume that $I'$ is a left comb. Then by definition
\begin{align*}
    F(I')=\{a(x'_i,x'_{i+1})\}_{\ell'\leq i \leq s'-1}.
\end{align*}
Since $I\subseteq A'$ we obtain that $F(I)\subseteq a(A')$. By the structure of a left comb we have that
\begin{align*}
    \delta(x'_1,x'_{\ell'})>\delta(x'_{\ell'},x'_{\ell'+1})>\delta(x'_{\ell'+1},x'_{\ell'+2})>\ldots>\delta(x'_{s'-1},x'_{s'}).
\end{align*}

\begin{figure}[h]
\centering
{\hfil \begin{tikzpicture}[scale=0.6]
    
 	\draw (-2,6.5) ellipse (2.33 and 0.5);
 	\draw[red] (-2.66,6.5) ellipse (1 and 0.3);
 	\draw[rotate=124][red] (1.5,0) ellipse (2 and 0.5);
 	
 	\coordinate (T11) at (-4.33,6.5);
 	\coordinate (T12) at (-3.66,6.5);
 	\coordinate (T10) at (-2.33,6.5);
 	\coordinate (T9) at (-1.66,6.5);
 	\coordinate (T8) at (-1,6.5);
 	\coordinate (T7) at (-0.33,6.5);
 	\coordinate (T6) at (0.33,6.5);
 	\coordinate (T5) at (1,6.5);
 	\coordinate (T4) at (1.66,6.5);
 	\coordinate (T3) at (2.33,6.5);
 	\coordinate (T2) at (3,6.5);
 	\coordinate (T1) at (3.66,6.5);
 	\coordinate (T0) at (4.33,6.5);
    \coordinate (R10) at (-3.33,5);
    \coordinate (R9) at (-3,4.5);
    \coordinate (R8) at (-2.66,4);
    \coordinate (R7) at (-2.33,3.5);
    \coordinate (R6) at (-2,3);
    \coordinate (R5) at (-1.66,2.5);
    \coordinate (R4) at (-1.33,2);
    \coordinate (R3) at (-1,1.5);
    \coordinate (R2) at (-0.66,1);
    \coordinate (R1) at (-0.33,0.5);
    \coordinate (R0) at (0,0);
    \coordinate (X) at (-2.66,5);
    
	\node (q2) at (-2,8) [above,font=\small] {$A$};
	\node (q3) at (2.65,8) [above,font=\small] {$B$};
	\node (q4) at (-4,5.5) [left,font=\small] {$a(I)$};
	\node (q3) at (0,-0.2) [below left,font=\small] {$\{a(x_i,x_{i+1})\}_{\ell'\leq i \leq s-1}$};
    
    \draw [decorate,
    decoration = {brace, amplitude=8pt}] (-4.2,7.5) --  (0.2,7.5);
    \draw [decorate,
    decoration = {brace, amplitude=8pt}] (1.1,7.5) --  (4.2,7.5);
    
    \draw[line width=0.5] (R0)--(R1)--(R2)--(R3)--(R4)--(R5);
    \draw[line width=0.5] (R5)--(R6)--(R7)--(R8)--(R9)--(R10);
    \draw[line width=0.5] (T0)--(R0);
    \draw[line width=0.5] (T1)--(R1);
    \draw[line width=0.5] (T2)--(R2);
    \draw[line width=0.5] (T3)--(R3);
    \draw[line width=0.5] (T4)--(R4);
    \draw[line width=0.5] (T5)--(R5);
    \draw[line width=0.5] (T6)--(R6);
    \draw[line width=0.5] (T11)--(R6);
    \draw[line width=0.5][red] (X)--(T9);
    \draw[line width=0.5][red] (X)--(T12);
    
    \draw[fill][red] (R0) circle [radius=0.1];
 	\draw[fill][red] (R1) circle [radius=0.1];
 	\draw[fill][red] (R2) circle [radius=0.1];
 	\draw[fill][red] (R3) circle [radius=0.1];
 	\draw[fill][red] (R4) circle [radius=0.1];
 	\draw[fill][red] (R5) circle [radius=0.1];
    
    \end{tikzpicture}\hfil}
    \caption{A picture of $I$ and $\{a(x_i,x_{i+1})\}_{\ell'\leq i \leq s-1}$}
    \label{fig:uniqueinfo}
\end{figure}
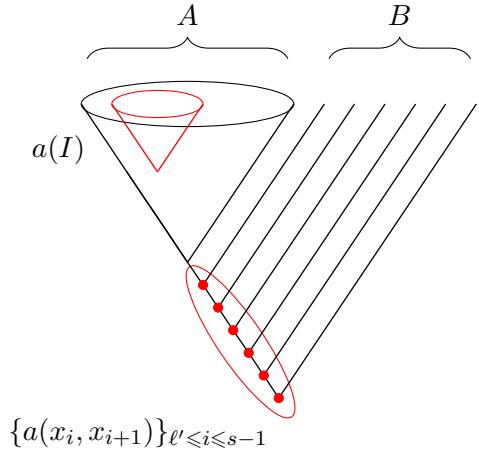

Thus $a(I)\cap \{a(x'_i,x'_{i+1})\}_{\ell'\leq i \leq s'-1}=\emptyset$ and consequently $F(I)\cap F(I')=\emptyset$.
\end{proof}

\section{Pre-processing}\label{sec:preprocessing}

As discussed in Subsection \ref{subsec:overview}, we now turn our focus to show that a simple daisy $H$ can be pre-processed in a smaller simple subdaisy $H'$ with the property that for every edge $X$ with petal petal $P$ we have that either $P$ is a closed interval in $X$ or is part of the ``teeth" of a maximal comb in $X$.

\begin{lemma}\label{lem:preprocessing}
For any simple $(r,m,k)$-daisy $H$ with vertex set $V(H)\subseteq [2^N]$, $K_0<M<K_1$, $|K_0\cup K_1|=k$ and $|M|=m$, there exists a subset $M'\subseteq M$ of size $|M'|=\frac{1}{2}k^{-1/2}m^{1/2}$ such that the simple $(r,\frac{1}{2}k^{-1/2}m^{1/2},k)$-daisy $H'=H[K_0\cup M'\cup K_1]$ satisfies one of the following:
\begin{enumerate}
    \item $M'$ is a closed interval in $V(H')$.
    \item There exists a maximal comb $I=A\cup B$ in $V(H')$ such that $M'\subseteq B$.
\end{enumerate}
\end{lemma}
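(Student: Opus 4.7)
I would prove this via a dichotomy on the auxiliary tree $T_{V(H)}$. Call an internal node \emph{clean} if its $T_{V(H)}$-subtree contains no element of $K_0 \cup K_1$, and \emph{dirty} otherwise. First I consider the ancestor $u = a(\min M, \max M)$. If $u$ is clean, then $M$ is already a closed interval in $V(H)$, and any subset $M' \subseteq M$ of the required size $s := \tfrac{1}{2}k^{-1/2}m^{1/2}$ remains closed in $V(H')$ because $a(\min M', \max M')$ is a descendant of $u$ and thus also clean; this yields condition (1).

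Assume now $u$ is dirty. The clean nodes of $T_{V(H)}$ decompose into a forest of maximal clean subtrees, each rooted at the unique clean child of some \emph{branch-point} dirty node (a dirty node with exactly one dirty child, its sibling being clean). If some maximal clean subtree contains at least $s$ leaves of $M$, pick $M'$ to be any $s$ of them, again yielding condition (1). Otherwise every maximal clean subtree has fewer than $s$ $M$-leaves, so the number of maximal clean subtrees exceeds $m/s = 2\sqrt{mk}$, giving more than $2\sqrt{mk}$ branch-point dirty nodes. But the dirty nodes of $T_{V(H)}$ lie on the union of the $k$ root-to-kernel paths, so the total number of dirty nodes is at most $\sum_{x \in K} \mathrm{depth}_{T_{V(H)}}(x)$; pigeonhole then gives some $x^* \in K$ of depth at least $2\sqrt{m/k}$ in $T_{V(H)}$.

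Applying a Ramsey-type argument to the root-to-$x^*$ path, analogous to Proposition~\ref{prop:findcomb}, at least half of its moves lie in a single direction (say left, if $x^* \in K_0$). From each corresponding off-sibling subtree on the path I pick one leaf lying in $M$; because $K_0 < M < K_1$, at most $|K|-1$ off-siblings on the path can miss $M$ entirely, so for $m \geq m_0(r,k)$ I obtain $M_1 \subseteq M$ of size at least $s$ whose consecutive $\delta$-values are strictly decreasing---a $1$-left comb---and moreover satisfying $\delta(x^*, \min M_1) > \delta(b_1, b_2) > \ldots$ by the path construction. Taking $M'$ to be any $s$-element subset of $M_1$ containing $\min M_1$ (still a $1$-left comb), the partition $A = \{x^*\}$, $B = M'$ already gives a left comb in $V(H')$; I then extend $\{x^*\} \cup M'$ to a maximal closed interval $I' = A' \cup B'$ in $V(H')$ while preserving the comb structure, using Propositions~\ref{prop:closed} and \ref{prop:maximalcomb} to absorb stray elements either into the handle (if they lie to the left of $M'$ and respect closedness) or into the teeth (if they lie to the right of $M'$ and continue the monotone $\delta$-sequence). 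This produces a maximal left comb with $M' \subseteq B \subseteq B'$, giving condition (2).

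The main obstacle is this final extension step: the subtree $V(H') \cap T(v_0)$, where $v_0$ is the root of the root-to-$x^*$ path, may contain additional elements of $K_0 \cup K_1$ that must be absorbed carefully into the handle or teeth without violating the monotonicity condition~(a3) or the closedness requirements (a2) of Definition~\ref{def:comb}. Verifying that such an extension yields a \emph{maximal} comb in the sense of Definition~\ref{def:maximalcomb} requires iterated application of Propositions~\ref{prop:closed} and \ref{prop:maximalcomb}, and this is where most of the technical work resides.
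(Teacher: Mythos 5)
The approach you take is genuinely different from the paper's, but it has a gap in the final step that is not merely a matter of "technical work residing there''.

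The paper's proof runs a two-stage greedy descent inside $T_{V(H)}$: in each iteration it splits the current closed interval $Z_i$ into its two child closed intervals $Z_i^L \cup Z_i^R$, keeps the right piece whenever it still meets $K_0$ (type~B), and discards the right piece and keeps the left whenever the right piece is kernel-free but too small (type~A1). Because only $\le k$ type-B steps can occur, a pigeonhole over the discarded mass gives a run of $m'$ consecutive A1 steps. The crucial feature of such a run is structural, not just quantitative: during the run the kernel intersection $K' := K_0\cap Z_{j+m'}$ is \emph{constant}, each discarded $Z_i^R$ lies entirely in $M$, and the sets $Z_{j+i}\cap V' = K'\cup\{z_{j+i},\ldots,z_{j+m'-1}\}$ form a nested chain of intervals that are closed in $V'$. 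That chain is precisely condition (a3*) of Definition~\ref{def:comb}, so $K'\cup M'$ is a left comb with $M'$ in the teeth for free, and the only "extension to maximal'' needed is to append further teeth on the right.

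Your construction replaces this telescope with a count of clean subtrees, a bound on dirty nodes, and a pigeonhole to find a deep kernel leaf $x^*$, then picks one $M$-leaf from each off-sibling subtree of the root-to-$x^*$ path. This does give a sequence $M_1$ whose consecutive $\delta$'s are monotone. But the set $\{x^*\}\cup M'$ is in general not even an \emph{interval} of $V(H')$: since $V(H') = K_0\cup M'\cup K_1$ keeps all of $K_0$, any element of $K_0\cap(x^*,\min M')$ breaks the consecutivity required in Definition~\ref{def:closed}(i). Those elements cannot be ignored, and there is no reason for them to fit the monotone $\delta$-sequence: they sit inside off-sibling subtrees together with the chosen $M'$-elements, so their common ancestors with the chosen teeth have strictly higher level than the path nodes, which destroys (a3). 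Your $x^*$ carries no information about the kernel elements between it and $M'$, whereas the paper's $K'$ is by construction a closed interval in $V'$ containing exactly those kernel elements that must serve as handle-plus-teeth, in the right nested order. So the "absorption into handle or teeth'' you defer to Propositions~\ref{prop:closed} and \ref{prop:maximalcomb} is not a routine maximality extension as in the paper; it is the entire content of the lemma, and the approach via a deep $x^*$ does not supply the nested-closed-interval chain that makes that content true. The dichotomy and clean-subtree idea in your first half is a clean reformulation of the paper's Stage~1, but the second half needs to be replaced by something that tracks the kernel intersection along the descent, as the paper's A/B case split does.
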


\begin{figure}[h]
\centering
{\hfil \begin{tikzpicture}[scale=0.4]
    
 	\draw (-5.5,9.5) ellipse (0.833 and 0.3);
 	\draw (5.5,9.5) ellipse (0.833 and 0.3);
 	\draw (-3.33,9.5) ellipse (1 and 0.3);
 	\draw[red] (1.33,9.5) ellipse (2 and 0.4);
 	
 	\coordinate (T13) at (-4.66,9.5);
 	\coordinate (T14) at (-6.33,9.5);
 	\coordinate (T15) at (4.66,9.5);
 	\coordinate (T16) at (6.33,9.5);
 	\coordinate (T11) at (-4.33,9.5);
 	\coordinate (T12) at (-3.66,9.5);
 	\coordinate (T10) at (-2.33,9.5);
 	\coordinate (T9) at (-1.66,9.5);
 	\coordinate (T8) at (-1,9.5);
 	\coordinate (T7) at (-0.33,9.5);
 	\coordinate (T6) at (0.33,9.5);
 	\coordinate (T5) at (1,9.5);
 	\coordinate (T4) at (1.66,9.5);
 	\coordinate (T3) at (2.33,9.5);
 	\coordinate (T2) at (3,9.5);
 	\coordinate (T1) at (3.66,9.5);
 	\coordinate (T0) at (4.33,9.5);
    \coordinate (R10) at (-3.33,8);
    \coordinate (R9) at (-3,7.5);
    \coordinate (R8) at (-2.66,7);
    \coordinate (R7) at (-2.33,6.5);
    \coordinate (R6) at (-2,6);
    \coordinate (R5) at (-1.66,5.5);
    \coordinate (R4) at (-1.33,5);
    \coordinate (R3) at (-1,4.5);
    \coordinate (R2) at (-0.66,4);
    \coordinate (R1) at (-0.33,3.5);
    \coordinate (R0) at (0,3);
    \coordinate (R') at (1,1.5);
    \coordinate (R'') at (0,0);
    \coordinate (X) at (-5.5,8.25);
    \coordinate (Y) at (5.5,8.25);
    
    \node (q1) at (-3.65,10.7) [above,font=\small] {$K_0$};
	\node (q2) at (1.35,10.7) [above,font=\small] {$M'$};
	\node (q3) at (4.95,10.7) [above,font=\small] {$K_1$};
    
    \draw [decorate,
    decoration = {brace, amplitude=7pt}] (-6.2,10.2) --  (-1.1,10.2);
    \draw [decorate,
    decoration = {brace, amplitude=7pt}] (-0.5,10.2) --  (3.2,10.2);
    \draw [decorate,
    decoration = {brace, amplitude=7pt}] (3.7,10.2) --  (6.2,10.2);
    
    \draw[line width=0.5] (R0)--(R1)--(R2);
    \draw[line width=0.5][red] (R2)--(R3)--(R4)--(R5);
    \draw[line width=0.5][red] (R5)--(R6)--(R7);
    \draw[line width=0.5] (R7)--(R8)--(R9)--(R10);
    \draw[line width=0.5] (T0)--(R0);
    \draw[line width=0.5] (T1)--(R1);
    \draw[line width=0.5][red] (T2)--(R2);
    \draw[line width=0.5][red] (T3)--(R3);
    \draw[line width=0.5][red] (T4)--(R4);
    \draw[line width=0.5][red] (T5)--(R5);
    \draw[line width=0.5][red] (T6)--(R6);
    \draw[line width=0.5][red] (T7)--(R7);
    \draw[line width=0.5] (T8)--(R8);
    \draw[line width=0.5] (T9)--(R9);
    \draw[line width=0.5] (T10)--(R10);
    \draw[line width=0.5] (T11)--(R10);
    \draw[line width=0.5] (R'')--(R')--(R0);
    \draw[line width=0.5] (R'')--(T14);
    \draw[line width=0.5] (R')--(T16);
    \draw[line width=0.5] (X)--(T13);
    \draw[line width=0.5] (Y)--(T15);
    
    \draw (-11.83,9.5) ellipse (1.5 and 0.4);
 	\draw (-21,9.5) ellipse (2 and 0.4);
 	\draw[red] (-16.16,9.5) ellipse (2.5 and 0.4);
 	
 	\coordinate (S0) at (-10.33,9.5);
 	\coordinate (S1) at (-13.33,9.5);
 	\coordinate (S2) at (-13.66, 9.5);
 	\coordinate (S3) at (-18.66, 9.5);
 	\coordinate (S4) at (-19, 9.5);
 	\coordinate (S5) at (-23, 9.5);
 	\coordinate (Q0) at (-16.66, 0);
 	\coordinate (Q1) at (-11.83, 7.25);
 	\coordinate (Q2) at (-21, 6.5);
 	\coordinate (Q3) at (-18.33, 2.5);
 	\coordinate (Q4) at (-16.16, 5.75);
 	
 	\draw[line width=0.5] (S0)--(Q0)--(S5);
 	\draw[line width=0.5] (S1)--(Q1);
 	\draw[line width=0.5] (S4)--(Q2);
 	\draw[line width=0.5] (Q4)--(Q3);
 	\draw[line width=0.5][red] (S3)--(Q4);
 	\draw[line width=0.5][red] (S2)--(Q4);
 	
 	\draw [decorate,
    decoration = {brace, amplitude=7pt}] (-13.2,10.2) --  (-10.4,10.2);
    \draw [decorate,
    decoration = {brace, amplitude=7pt}] (-18.5,10.2) --  (-13.7,10.2);
    \draw [decorate,
    decoration = {brace, amplitude=7pt}] (-22.9,10.2) --  (-19.1,10.2);
 	
 	\node (q4) at (-21,10.7) [above,font=\small] {$K_0$};
	\node (q5) at (-16.16,10.7) [above,font=\small] {$M'$};
	\node (q6) at (-11.83,10.7) [above,font=\small] {$K_1$};
 	
    \end{tikzpicture}\hfil}
    \caption{An example of $H'$ satisfying statement (1) and (2).}
    \label{fig:preprocessing}
\end{figure}
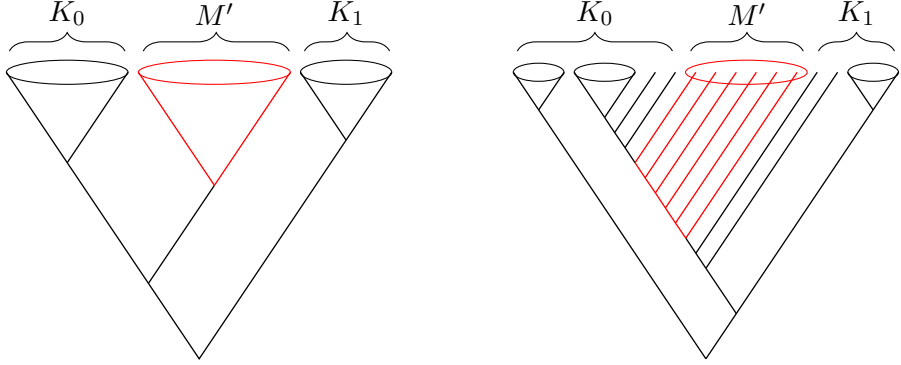

\begin{proof}
Let $V:=V(H)$. Given a closed interval $I\subseteq V$, by condition (ii) of Definition \ref{def:closed} there exists a vertex $u \in a(I)$ such that $I=V(u)$. Consider the partition of $I$ given by $I=I^L\cup I^R$, where $I^L=V_L(u)$ are the left descendants of $u$ and $I^R=V_R(u)$ are the right descendants of $u$. Let $u^L$ be the left child of $u$ and $u^R$ be the right child. Hence, $I^L=V(u^L)$ and $I^R=V(u^R)$ and consequently $I=I^L \cup I^R$ is a partition of a closed interval in $V$ into two non empty closed intervals in $V$.

\begin{figure}[b]
\centering
{\hfil \begin{tikzpicture}[scale=0.4]
    
    \draw[blue] (2,5.5) ellipse (1.66 and 0.3);
    \draw[red] (-2,5.5) ellipse (1.66 and 0.3);
    \draw (0,5.5) ellipse (8 and 0.5);
    
    \coordinate (T0) at (3.66,5.5);
    \coordinate (T1) at (0.33,5.5);
    \coordinate (T2) at (-0.33,5.5);
    \coordinate (T3) at (-3.66,5.5);
    \coordinate (R0) at (0,0);
    \coordinate (R1) at (2,3);
    \coordinate (R2) at (-2,3);

    \node (q0) at (0,0) [below right] {$u$};
    \node (q1) at (2,3) [below right] {$u^R$};
	\node (q2) at (-2,3) [below left] {$u^L$};
	\node (q3) at (8,5.5) [above right] {$V$};
	\node (q4) at (-2,5.9) [above] {$I^L$};
	\node (q5) at (2,5.9) [above] {$I^R$};
	\node (q6) at (0,7.8) [above] {$I=V(u)$};
    
    \draw [decorate,
    decoration = {brace, amplitude=8pt}] (-3.3,7.2) --  (3.3,7.2);
    
    \draw[line width=0.5] (R1)--(R0)--(R2);
    \draw[line width=0.5][blue] (T0)--(R1)--(T1);
    \draw[line width=0.5][red] (T2)--(R2)--(T3);
    
    \draw[fill] (R0) circle [radius=0.1];
 	\draw[fill][blue] (R1) circle [radius=0.1];
 	\draw[fill][red] (R2) circle [radius=0.1];
    
    \end{tikzpicture}\hfil}
    \caption{Partition of a closed interval into two other closed intervals.}
    \label{fig:Ipart}
\end{figure}
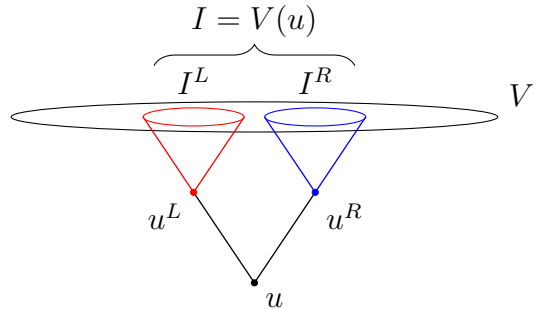

We will construct our set $M'$ iteratively. This is done in two stages. In the first stage we start with the closed interval $Y_0=V$ and proceed recursively as follows: For a closed interval $Y_i\subseteq V$, let $Y_i=Y_i^L\cup Y_i^R$ be the partition described above in two closed intervals. The choice of $Y_{i+1}$ is determined by the conditions below
\begin{enumerate}
\item[(P1)] Set $Y_{i+1}:=Y_i^L$ if $|Y_i^L\cap M|\geq |Y_i^R\cap M|$.
\item[(P2)] Set $Y_{i+1}:=Y_i^R$ if $|Y_i^L\cap M|<|Y_i^R\cap M|$.
\end{enumerate}

We stop the process whenever $Y_{i}\cap K_0=\emptyset$ or $Y_{i}\cap K_1=\emptyset$. Note that since $Y_i^L$ and $Y_i^R$ are non empty, at each iteration of the process the size of $|(K_0\cup K_1)\cap Y_i|$ reduces at least by one. Thus, in a finite amount of time the process terminates. Let $Y$ be the closed interval obtained in the end. We may assume without loss of generality that $Y\cap K_1=\emptyset$. Write $Y=K_Y\cup M_Y$, where $K_Y\subseteq K_0$ and $M_Y\subseteq M$. It is not hard to check by the construction that $|M_Y|\geq m/2$.

For the second stage, let $Z_0=Y$. Given a closed interval $Z_i \subseteq V$, let $Z_i=Z_i^L\cup Z_i^R$ be the partition into two non empty closed intervals. By definition we have that $Z_i^L<Z_i^R$. We say that a partition $Z_i^L\cup Z_i^R$ is of type $A$ if $Z_i^R\cap K_0=\emptyset$ and of type $B$ if $Z_i^R\cap K_0\neq \emptyset$. The choice of $Z_{i+1}$ will depend on the type of partition as follows: 

\vspace{0,2cm}

\noindent \underline{Type A:} $Z_i^R\cap K_0 = \emptyset$. 

\vspace{0.2cm}

\begin{enumerate}
    \item[(A1)] Set $Z_{i+1}:=Z_i^L$ if $|Z_i^R|<\frac{1}{2}k^{-1/2}m^{1/2}$.
    \item[(A2)] Set $Z_{i+1}:=Z_I^R$ if $|Z_i^R|\geq \frac{1}{2}k^{-1/2}m^{1/2}$ and stop the process.
\end{enumerate}

\vspace{0,2cm}

\noindent \underline{Type B:} $Z_i^R\cap K_0 \neq \emptyset$. 

\vspace{0.2cm}

\begin{enumerate}
    \item[(B)] Set $Z_{i+1}:=Z_i^R$.
\end{enumerate}

We terminate the process if we either reach condition (A2) or if $Z_{i+1}$ is a singleton. Since $|Z_{i+1}|<|Z_i|$, the process is finite. Let $Z$ be the closed interval obtained at the end. We split into two cases.

If the process terminates after some instance of condition (A2), then it means that $Z=Z_i^R$ is a closed interval in $V$ with $|Z|\geq \frac{1}{2}k^{-1/2}m^{1/2}$ for some index $i$. Because we are in a partition of type $A$ we also obtain that $Z\subseteq M$. Thus, if we set $M'=Z$ the simple subdaisy $H[K_0\cup M'\cup K_1]$ satisfies condition (1) of the statement.

Now suppose that the process terminates with $|Z|=1$. Then it means that for every partition of type A we had an instance of condition (A1). If $Z_{i+1}$ is a set obtained after condition (A1), then $|Z_{i+1}\cap M|> |Z_i\cap M|-\frac{1}{2}k^{-1/2}m^{1/2}$ and $|Z_{i+1}\cap K_0|=|Z_{i+1}\cap K_0|$. That is, condition (A1) removes less than $\frac{1}{2}k^{-1/2}m^{1/2}$ element of $M$ from $Z_i$ and no elements of $K_0$ from it. Moreover, if $Z_{i+1}$ is obtained after condition (B), then $|Z_{i+1}\cap M|=|Z_i\cap M|$ and $|Z_{i+1}\cap K_0|<|Z_i\cap K_0|$. That is, $M$ remains unaffected, but $K_0$ loses at least one element from $K_i$ to $K_{i+1}$.

Consider the sequence of operations applied to $Z_0$ in order to obtain $Z$. Since we start with a set $Z_0=Y$ with $|Z_0\cap M|=|M_Y|\geq m/2$, we obtain that during our process we had at least
\begin{align*}
    \frac{\frac{m}{2}}{\frac{1}{2}k^{-1/2}m^{1/2}}=k^{1/2}m^{1/2}
\end{align*}
instances of condition (A1) in the sequence. Similarly, since $|Z_0\cap K_0|=|K_Y|\leq k$, we obtain that we had at most $k$ instances of condition (B) in the sequence. Hence, by the pigeonhole principle there exists a sequence of consecutive applications of condition (A1) of length at least $m'=k^{1/2}m^{1/2}/(k+1)\geq\frac{1}{2}k^{-1/2}m^{1/2}$. 

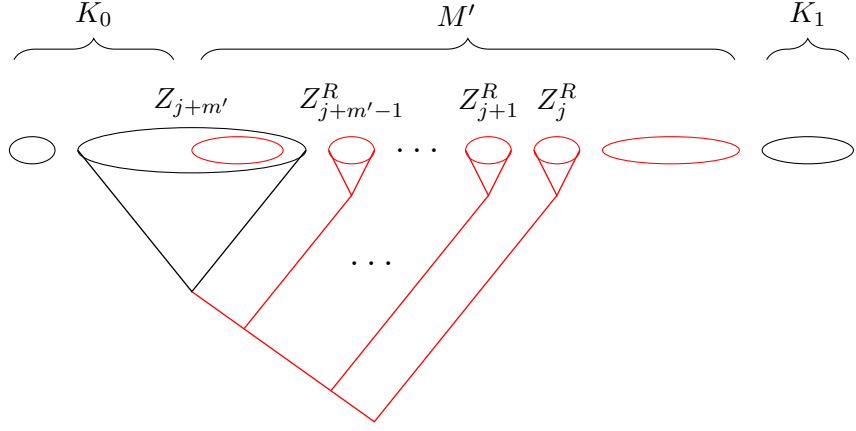
\begin{figure}[h]
\centering
{\hfil \begin{tikzpicture}[scale=0.6]
    
    \draw[red] (1.5,5) ellipse (0.5 and 0.3);
    \draw[red] (3,5) ellipse (0.5 and 0.3);
    \draw[red] (5.5,5) ellipse (1.5 and 0.3);
    \draw[red] (-1.5,5) ellipse (0.5 and 0.3);
    \draw[red] (-4,5) ellipse (1 and 0.3);
    \draw (-5,5) ellipse (2.5 and 0.5);
    \draw (8.5,5) ellipse (1 and 0.3);
    \draw (-8.5,5) ellipse (0.5 and 0.3);
    
    \coordinate (A1) at (9.5,5);
    \coordinate (B1) at (7.5,5);
    \coordinate (AM) at (7,5);
    \coordinate (BM) at (4,5);
    \coordinate (Aj1) at (3.5,5);
    \coordinate (Bj1) at (2.5,5);
    \coordinate (Cj1) at (3,4);
    \coordinate (Aj2) at (2,5);
    \coordinate (Bj2) at (1,5);
    \coordinate (Cj2) at (1.5,4);
    \coordinate (Aj3) at (-1,5);
    \coordinate (Bj3) at (-2,5);
    \coordinate (Cj3) at (-1.5,4);
    \coordinate (Ajm) at (-2.5,5);
    \coordinate (Bjm) at (-7.5,5);
    \coordinate (Cjm) at (-5,1.875);
    \coordinate (A0) at (-8,5);
    \coordinate (B0) at (-9,5);
    \coordinate (R0) at (-1,-1);
    \coordinate (R1) at (-1.952,-0.315);
    \coordinate (R2) at (-3.857, 1.056);

    \node (r) at (0,5) [font=\large] {$\dots$};
    \node (r2) at (-1,2.5) [font=\large] {$\dots$};
    \node (z1) at (3,5.4) [above, font=\small] {$Z_j^R$};
    \node (z2) at (1.5,5.4) [above, font=\small] {$Z_{j+1}^R$};
    \node (z3) at (-1.5,5.4) [above, font=\small] {$Z_{j+m'-1}^R$};
    \node (z4) at (-5,5.5) [above, font=\small] {$Z_{j+m'}$};
    \node (m) at (0.8,7.5) [above, font=\small] {$M'$};
    \node (k1) at (8.5,7.5) [above, font=\small] {$K_1$};
    \node (k0) at (-7.15,7.5) [above, font=\small] {$K_0$};
    
    \draw [decorate,
    decoration = {brace, amplitude=8pt}] (-4.8,7) --  (6.9,7);
    \draw [decorate,
    decoration = {brace, amplitude=8pt}] (7.6,7) --  (9.4,7);
    \draw [decorate,
    decoration = {brace, amplitude=8pt}] (-8.9,7) --  (-5.4,7);
    
    \draw[line width=0.5][red] (Aj1)--(Cj1)--(Bj1);
    \draw[line width=0.5][red] (Aj2)--(Cj2)--(Bj2);
    \draw[line width=0.5][red] (Aj3)--(Cj3)--(Bj3);
    \draw[line width=0.5] (Ajm)--(Cjm)--(Bjm);
    \draw[line width=0.5][red] (Cjm)--(R0)--(Cj1);
    \draw[line width=0.5][red] (Cj2)--(R1);
    \draw[line width=0.5][red] (Cj3)--(R2);
    
    
    \end{tikzpicture}\hfil}
    \caption{Sequence of closed intervals $Z_j^R,\ldots, Z_{j+m'-1}^R$.}
    \label{fig:Zjs}
\end{figure}

Let $Z_j, Z_{j+1},\ldots, Z_{j+m'}$ be the closed intervals involved in the sequence. That is, $Z_{i+1}$ is obtained from $Z_i$ by a condition (A1) for every $j\leq i \leq j+m'-1$. By the algorithm, we obtain closed intervals $Z_j^R,\ldots,Z_{j+m'-1}^R \subseteq M$ all of them with size less than $\frac{1}{2}k^{-1/2}m^{1/2}$. For every $j\leq i \leq j+m'-1$, choose a point $z_i\in Z_i^R$. 

Set $M'=\{z_j,\ldots,z_{j+m'-1}\}$. We claim that $M'$ is a set satisfying condition (2) of the statement. Let $H'=H[K_0\cup M'\cup K_1]$ and $V'=V(H')$. To see that condition (2) is satisfied we just need to find a maximal comb $I=A\cup B \subseteq V'$ such that $M'\subseteq B$. Let $K'=K_0\cap Z_{j+m'} $ and consider the interval $I'=K' \cup M'$ in $V'$. By construction, the intervals $K'$ and $K'\cup \{z_{j+i},\ldots,z_{j+m'-1}\}$ are closed in $V'$ for every $0\leq i\leq m'-1$. Therefore, by condition (a3*) of Definition \ref{def:comb}, the interval $I'=A'\cup B'$ is a left comb and $M' \subseteq B'$. Since every comb can be extended to a maximal one, there exists a maximal left comb $I=A\cup B$ with $A=A'$ and $B'\subseteq B$ such that $M'\subseteq B$ and we are done.
\end{proof}

One of the main consequences of our pre-processing is that it allows us to identify certain closed and non-closed intervals in an arbitrary edge of $H'$. To be more precise, given an edge $X \in E(H')$ with petal $P$ and $V'=V(H')$, let 
\begin{align*}
    \cC_{V',M'}&=\{I:\: \text{$I$ is an interval in $V'$ and either $M'\subseteq I$ or $M'\cap I=\emptyset$}\}  \\
    \cC_{X,P}&=\{I:\: \text{$I$ is an interval in $X$ and either $P\subseteq I$ or $P\cap I=\emptyset$}\}
\end{align*}
be the set of intervals in $V'$ and $X$ such that the intervals either contain or are disjoint of $M'$ and $P$, respectively. The next proposition shows that there is a one-to-one correspondence between $\cC_{V',M'}$ and $\cC_{X,P}$ preserving the property of being closed.

\begin{proposition}\label{prop:preserveclosed}
For a given edge $X\in E(H')$ with petal $P$, there exists a bijection $\Psi: \cC_{V',M'}\rightarrow \cC_{X,P}$ given by
\begin{align*}
    \Psi(I)=I\cap X
\end{align*}
such that $I$ is a closed interval in $V'$ if and only if $\Psi(I)$ is a closed interval in $X$.
\end{proposition}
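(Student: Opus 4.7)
The plan is to prove the statement in four steps: well-definedness and bijectivity of $\Psi$, followed by the two directions of the closedness biconditional.

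Steps~1--2 establish the basic structure of $\Psi$. For $I\in\cC_{V',M'}$, the intersection $\Psi(I)=I\cap X$ is automatically an interval in $X$ (since $I$ is an interval of $V'$ and $X\subseteq V'$), and it lies in $\cC_{X,P}$: if $M'\subseteq I$ then $P\subseteq\Psi(I)$, while if $M'\cap I=\emptyset$ then $P\cap\Psi(I)=\emptyset$. Bijectivity follows by exhibiting an explicit inverse: for $J\in\cC_{X,P}$ with $P\subseteq J$, set $\Psi^{-1}(J):=(J\cap K_0)\cup M'\cup (J\cap K_1)$; for $J$ with $P\cap J=\emptyset$, set $\Psi^{-1}(J):=J$ (which must sit entirely in $K_0$ or $K_1$, since $J$ is an interval of $X$ disjoint from $P$ and $X\setminus P=K_0\cup K_1$). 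A routine check confirms $\Psi\circ\Psi^{-1}=\mathrm{id}$ and $\Psi^{-1}\circ\Psi=\mathrm{id}$.

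For Step~3 (forward closedness), I shall use the characterization that $I\subseteq V'$ is closed in $V'$ if and only if $I=V'(u)$ for some ancestor $u$ in $T_{[2^N]}$ (the unique such witness being $u_I:=a(\min I,\max I)$), where $V'(u)$ denotes the descendants of $u$ in $V'$. Given $I=V'(u_I)$, one immediately has
\begin{align*}
\Psi(I)=I\cap X=V'(u_I)\cap X=X(u_I),
\end{align*}
which shows $\Psi(I)$ is closed in $X$ (with witness $a(\min\Psi(I),\max\Psi(I))$, a descendant of $u_I$).

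Step~4, the backward direction, is the main obstacle. Assume $\Psi(I)=X(u_\Psi)$ with $u_\Psi:=a(\min\Psi(I),\max\Psi(I))$; the goal is to produce $I=V'(u_I)$. The central identity is that when $M'\subseteq I$, we have $V'\setminus I=X\setminus\Psi(I)$, because $M'\setminus P\subseteq I$ accounts for every element of $V'\setminus X$ sitting inside $I$; hence whenever $u_I=u_\Psi$, the equality $V'(u_I)\setminus I=X(u_I)\setminus\Psi(I)=\emptyset$ yields $V'(u_I)=I$. The witnesses coincide for free when both $\min I,\max I\in X$, handling the case $I\cap K_0\neq\emptyset$ and $I\cap K_1\neq\emptyset$. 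For the remaining configurations I split by Lemma~\ref{lem:preprocessing}: in case~(1), $M'$ closed in $V'$ forces all of $M'$ (and in particular $\max P$) into one subtree of $u_I$, again giving $u_\Psi=u_I$; in case~(2), with $M'\subseteq B_0$ for a maximal comb $I_0=A_0\cup B_0$ (say, left), the ancestor relations $a(b_i,b_j)=u_j$ for teeth indices $i<j$ identify $u_I=u_q$ and $u_\Psi=u_p$ where $b_q=\max M'$ and $b_p=\max P$. Unwinding the closure conditions reduces the biconditional to the claim that every tooth $b_i$ with $p<i\leq q$ belongs to $M'$, which follows from $\min M'\leq b_p<b_i\leq \max M'$ together with $V'\cap[\min M',\max M']\subseteq M'$ (an immediate consequence of $K_0<M'<K_1$). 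A parallel argument covers the symmetric case $M'\cap I=\emptyset$: any putative $M'$-descendant of $u_\Psi$ forces either $\min I\in M'$ (impossible, as $\min I\in K_0\cup K_1$) or $P\subseteq\Psi(I)$ (impossible, using $|P|=r\geq 3>0$ and $\Psi(I)\cap P=\emptyset$). The delicate step is thus the case-(2) comb analysis, where one must carefully track in which of $K_0$, $P$, $M'\setminus P$, or $K_1$ each tooth $b_i$ lies, and exploit both the comb's monotonicity and the order $K_0<M'<K_1$ to rule out problematic configurations.
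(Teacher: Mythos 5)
Your plan does mirror the paper's proof: an explicit bijection, an easy forward direction by projecting descendant sets, and a backward direction split by the two alternatives in Lemma~\ref{lem:preprocessing}. Steps~1--3 and the unifying ``witnesses coincide when $\min I,\max I\in X$'' observation are sound and match the paper's reasoning. But precisely at what you call ``the delicate step'' there are real holes. A minor one: in case~(1) the configuration $I=M'$ yields $u_I=a(\min M',\max M')\neq a(\min P,\max P)=u_\Psi$, so the claim that closedness of $M'$ forces $u_\Psi=u_I$ is simply false there; the biconditional still holds trivially (both $M'$ and $P$ are closed --- the former by hypothesis, the latter by your Step~3) but the argument as written does not cover it.

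The substantive gap is case~(2) with $M'\cap I=\emptyset$, say $I\subseteq K_0$. Your dichotomy --- a putative $M'$-descendant of $u_\Psi$ forces $\min I\in M'$ or $P\subseteq\Psi(I)$ --- is not exhaustive: when $I\subseteq K_0^Q:=K_0\cap Q$ sits inside the maximal comb $Q$, one has $\min I\in K_0$ and $\Psi(I)\cap P=\emptyset$, so neither disjunct fires, and you have given no reason why such a descendant cannot exist. The paper disposes of this by applying Proposition~\ref{prop:closed} (and Proposition~\ref{prop:maximalcomb}) to the closed interval $V'(u_\Psi)$ and the maximal comb $Q$, splitting into $V'(u_\Psi)\cap Q=\emptyset$, $V'(u_\Psi)\subseteq Q$, or $Q\subseteq V'(u_\Psi)$. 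Your dichotomy handles only the first and third; the middle case needs the extra observation that $I\subseteq K_0^Q$, that $K_0^Q$ is itself closed by condition (a3*), and hence $V'(u_\Psi)\subseteq V'\bigl(a(\min K_0^Q,\max K_0^Q)\bigr)=K_0^Q$ is disjoint from $M'$. A closely analogous use of the closedness of $K_0^Q$ (via Fact~\ref{fact:closedanc}) is also what the paper employs in the $M'\subseteq I$ half of case~(2) to separate the subcase $\min I=\min K_0^Q$ (both $I$ and $\Psi(I)$ closed) from $\min K_0^Q<\min I\leq\min M'$ (both not closed); your one-line ``unwinding'' to a tooth-membership fact does not recover these. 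Until these comb-interaction steps are supplied, Step~4 is a sketch rather than a proof.
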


\begin{proof}
If $I\in \cC_{V',M'}$ is such that $I\cap M'=\emptyset$, then either $I\subseteq K_0$ or $I\subseteq K_1$. Since $X=K_0\cup P\cup K_1$ for some $P\in M'^{(r)}$, we obtain that $\Psi(I)=I\cap X=I$. This shows that $\Psi$ is a bijection from the intervals of $V'$ disjoint of $M'$ to the intervals of $X$ disjoint of $P$. 

Now suppose that $I \in \cC_{V',M'}$ is such that $M'\subseteq I$. Then $I$ can be written as $I=K_I\cup M'$ with $K_I\subseteq K_0\cup K_1$. Thus $\Psi(I)=I\cap X=K_I \cup P$. Since $K_I\neq K_{I'}$ for $I\neq I'$, we obtain that $\Psi$ is an injection from the intervals of $V'$ containing $M'$ to the intervals of $X$ containing $P$. To check surjectivity, just notice that $K_I\cup P$ is an interval if and only if $K_I\cup M'$ is an interval.

It remains to prove that $I$ is closed if and only if $\Psi(I)$ is closed. Throughout the rest of the proof, for a set $S\subseteq V$ we define
\begin{align*}
    x_S=\min(S), \quad y_S=\max(S), \quad u_S=a(x_S,y_S).
\end{align*}
Note that the backwards direction is straightforward from the definition of being closed.

\begin{proposition}\label{prop:backclosed}
If $I$ is closed in $V'$, then $I\cap X$ is closed in $X$.
\end{proposition}

\begin{proof}
Suppose by contradiction that $I\cap X$ is not closed in $X$. Then by condition ($\star\star$) of Definition \ref{def:closed}, there exists $y \in X\setminus I$ such that $u_{I\cap X}$ is an ancestor of $y$. Since $I\cap X \subseteq I$, we have that $u_I$ is an ancestor of $u_{I\cap X}$. Therefore, $y\in X\setminus I \subseteq V'\setminus I$ is an ancestor of $u_I$ which contradicts the fact that $I$ is closed in $V'$.
\end{proof}

The following observation will be useful for the rest of the proof.

\begin{fact}\label{fact:closedanc}
Let $W=V(u_W)$ be a closed interval in $V$. If $x$ and $y$ are two vertices such that $x\in W$ and $y\notin W$, then $a(x,y)=a(y,u_W)$.   
\end{fact}

In particular, Fact \ref{fact:closedanc} applied to $W=M'$ says that an element $y\notin M'$ have the same common ancestor with any $x\in M'$. We split the proof of the forward implication depending on the structure of $H'$ given by Lemma \ref{lem:preprocessing}.

\vspace{0.2cm}

\noindent \underline{Case 1:} $M'$ is a closed interval in $V'$.

The proof of Case 1 is slightly different depending on the location of the interval $I$ in $V'$.

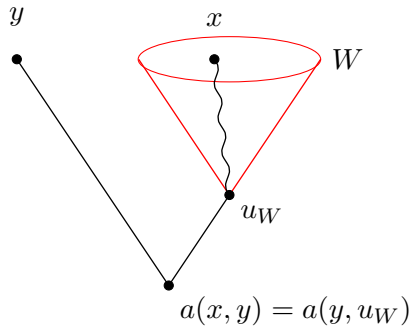
\begin{figure}[h]
\centering
{\hfil \begin{tikzpicture}[scale=0.6]
    
    \draw[red] (1.33,5) ellipse (2 and 0.5);
    
    \coordinate (R) at (0,0);
    \coordinate (Q) at (1.33,2);
    \coordinate (Y) at (-3.33,5);
    \coordinate (A) at (-0.66,5);
    \coordinate (B) at (3.33,5);
    \coordinate (X) at (1,5);
    
    \node (x) at (1,5.5) [above, font=\small] {$x$};
    \node (y) at (-3.33,5.5) [above, font=\small] {$y$};
    \node (q) at (1.33,2) [below right, font=\small] {$u_W$};
    \node (r) at (0,0) [below right, font=\small] {$a(x,y)=a(y,u_W)$};
    \node (w) at (3.33,5) [right, font=\small] {$W$};
    
    
    \draw[line width=0.5] (Y)--(R)--(Q);
    \draw[line width=0.5][red] (A)--(Q)--(B);
    \draw[line width=0.5, decoration={complete sines,amplitude=2, segment length=11},
        decorate,
        rounded corners=0.7] (Q)--(X);
        
    
    \draw[fill] (X) circle [radius=0.1];
 	\draw[fill] (Y) circle [radius=0.1];
 	\draw[fill] (Q) circle [radius=0.1];
 	\draw[fill] (R) circle [radius=0.1];
    
    \end{tikzpicture}\hfil}
    \caption{A picture of Fact \ref{fact:closedanc}}
    \label{fig:Zjs}
\end{figure}

\vspace{0.2cm}

\noindent \underline{Case 1.1:} $I \in \cC_{V',M'}$ such that $I\cap M'=\emptyset$.

As seen before, we have that $\Psi(I)=I$. By condition ($\star\star$) of Definition \ref{def:closed} there is no vertex $x\in X\setminus I$ such that $u_I$ is an ancestor of $x$. If there is a descendant of $u_I$ in $V'\setminus I$, then the descendant is in the set $V'\setminus X=M'\setminus P$. Since $I\cap M'=\emptyset$, Fact \ref{fact:closedanc}, applied to the closed interval $M'$, implies that for every $y\in I'$ and $x\in M'$ we have $a(x,y)=a(y,u_{M'})$. Thus, if $u_I$ is an ancestor of some $x \in M'$, then $u_I$ is an ancestor of $u_{M'}$. This implies that $u_I$ is an ancestor for the entire set $M'$ and in particular of $P$, which contradicts the fact that $I$ is closed in $X$. Therefore, $I$ is a closed interval in $V'$.

\vspace{0.2cm}

\noindent \underline{Case 1.2:} $I \in \cC_{V',M'}$ such that $M'\subseteq I$.

Suppose that $I=K_I\cup M'$ is a interval in $V'$ containing $M'$. We need to prove that $I=K_I\cup M'$ is closed in $V'$ if $\Psi(I)=I\cap X=K_I\cup P$ is closed in $X$. If $K_I=\emptyset$, then $I=M'$ which is by assumption closed in $V'$. Otherwise, we claim that $u_I=u_{\Psi(I)}$. That is $I$ and $\Psi(I)$ have the same common ancestor.

The assumption that $K_I\neq \emptyset$ gives us that either $x_I<\min(M')$ or $y_I>\max(M')$. Assume without loss of generality that $y_I>\max(M')$. Thus, $y_I\in K_I$ and we have that $y_I=y_{\Psi(I)}=\max(K_I)$. If $x_I \notin M'$, then similarly we have $x_I=x_{\Psi(I)}$ and consequently $u_I=a(x_I,y_I)=a(x_{\Psi(I)},y_{\Psi(I)})=u_{\Psi(I)}$. Now if $x_I \in M'$, then $x_I\notin K_I$. This implies that $x_{\Psi(I)} \in P\subseteq M'$. Since both $x_I,x_{\Psi(I)} \in M'$ and $y_I=y_{\Psi(I)}\notin M'$, by Fact \ref{fact:closedanc} we obtain that $u_I=a(x_I,y_I)=a(u_{M'},y_I)=a(u_{M'},y_{\Psi(I)})=a(x_{\Psi(I)},y_{\Psi(I)})=u_{\Psi(I)}$. Hence, $I=K_I\cup M'$ and $\Psi(I)=K_I\cup P$ have the same common ancestor.

To finish the proof note that by condition ($\star\star$) of Definition \ref{def:closed} there are no descendants of $u_{\Psi(I)}$ in $X\setminus \Psi(I)$. Since $u_I=u_{\Psi(I)}$ and $V\setminus I'=K\setminus K_I=X\setminus \Psi(I)$, we conclude that there are no descendants of $u_I$ in $V'\setminus I$ and consequently $I$ is closed in $V'$.

\vspace{0.2cm}

\noindent \underline{Case 2:} $M'\subseteq Q$ for some maximal comb $Q=A^Q\cup B^Q$ in $V'$ with $M'\subseteq B^Q$.

We may assume without loss of generality that $Q$ is a maximal left comb. Let $K_0^Q=K_0\cap Q$ and $K_1^Q=K_1\cap Q$. Clearly $Q=K_0^Q\cup M'\cup K_1^Q$ with $K_0^Q<M'<K_1^Q$. Moreover, $Q=A^Q\cup B^Q$ with $A^Q<B^Q$ and $M'\subseteq B^Q$. Thus, $A^Q\subseteq K_0^Q$ and by condition (a3*) of Definition \ref{def:comb}, we obtain that $K_0^Q$ is closed in $V'$. As in the first case, we split into two cases depending on the type of the interval.

\vspace{0.2cm}

\noindent \underline{Case 2.1:} $I \in \cC_{V',M'}$ such that $I\cap M'=\emptyset$.

Suppose that $I$ is a closed interval in $X$. We claim that $V'(u_I)\cap M' = \emptyset$, i.e., the descendants of $u_I$ are disjoint of $M'$. Applying Proposition \ref{prop:closed} to the closed interval $V'(u_I)$ and maximal comb $Q$ gives us that either $V'(u_I)\cap Q=\emptyset$, $V'(u_I)\subseteq Q$ or $Q\subseteq V'(u_I)$. If $V'(u_I)\cap Q=\emptyset$, then we immediately obtain that $V'(u_I)\cap M'=\emptyset$, since $M'\subseteq Q$. If $Q\subseteq V'(u_I)$, then $M'\subseteq V'(u_I)$ and consequently $P=M'\cap X\subseteq V'(u_I)\cap X=X(u_I)$. This implies that $X(u_I)\neq I$, which contradicts $I$ being closed in $X$.

Thus, we may assume that $V'(u_I)\subseteq Q$ and $M' \not\subseteq V'(u_I)$. Then, by Proposition \ref{prop:maximalcomb}, we have that $V'(u_I)=A^{V'(u_i)}\cup B^{V'(u_I)}$ where either $V'(u_I) \subseteq A^Q$ or $A^{V'(u_i)}=A^Q$ and $B^{V'(u_I)}\subseteq B^Q$. For the first case, note that $A^Q\cap M'=\emptyset$ and therefore $V'(u_I)\cap M'=\emptyset$. For the second case, note that since $M'\not\subseteq V'(u_I)$, then $M'\not\subseteq B^{V'(u_I)}$. This implies that $V'(u_I)\subseteq K_0\cup M'$. Together with the fact that $I\cap M'=\emptyset$ and $I\subseteq V'(u_I)$, we obtain that $I\subseteq K_0^Q$. Since $K_0^Q$ is closed in $V'$, we have that the common ancestor $u_{K_0^Q}=a(\min(K_0^Q),\max(K_0^Q))$ is an ancestor of the entire $I$ and therefore of $u_I$. Hence, $V'(u_I)\subseteq K_0^Q$, which implies that $V'(u_I)\cap M'=\emptyset$. The fact that $I$ is closed in $V'$ now follows because $V'(u_I)=X(u_I)=I$.

\begin{figure}[h]
\centering
{\hfil \begin{tikzpicture}[scale=0.5]
    
 	\draw (-5.5,9.5) ellipse (0.833 and 0.3);
 	\draw (5.5,9.5) ellipse (0.833 and 0.3);
 	\draw (-3.33,9.5) ellipse (1 and 0.3);
 	\draw[red] (1.33,9.5) ellipse (2 and 0.4);
 	
 	\coordinate (T13) at (-4.66,9.5);
 	\coordinate (T14) at (-6.33,9.5);
 	\coordinate (T15) at (4.66,9.5);
 	\coordinate (T16) at (6.33,9.5);
 	\coordinate (T11) at (-4.33,9.5);
 	\coordinate (T12) at (-3.66,9.5);
 	\coordinate (T10) at (-2.33,9.5);
 	\coordinate (T9) at (-1.66,9.5);
 	\coordinate (T8) at (-1,9.5);
 	\coordinate (T7) at (-0.33,9.5);
 	\coordinate (T6) at (0.33,9.5);
 	\coordinate (T5) at (1,9.5);
 	\coordinate (T4) at (1.66,9.5);
 	\coordinate (T3) at (2.33,9.5);
 	\coordinate (T2) at (3,9.5);
 	\coordinate (T1) at (3.66,9.5);
 	\coordinate (T0) at (4.33,9.5);
    \coordinate (R10) at (-3.33,8);
    \coordinate (R9) at (-3,7.5);
    \coordinate (R8) at (-2.66,7);
    \coordinate (R7) at (-2.33,6.5);
    \coordinate (R6) at (-2,6);
    \coordinate (R5) at (-1.66,5.5);
    \coordinate (R4) at (-1.33,5);
    \coordinate (R3) at (-1,4.5);
    \coordinate (R2) at (-0.66,4);
    \coordinate (R1) at (-0.33,3.5);
    \coordinate (R0) at (0,3);
    \coordinate (R') at (1,1.5);
    
    \node (q1) at (-2.65,10.7) [above,font=\small] {$K_0^Q$};
	\node (q2) at (1.35,10.7) [above,font=\small] {$M'$};
	\node (q3) at (3.95,10.7) [above,font=\small] {$K_1^Q$};
	\node (q4) at (-3.65,12.5) [above,font=\small] {$K_0$};
	\node (q5) at (4.9,12.5) [above,font=\small] {$K_1$};
    
    \draw [decorate,
    decoration = {brace, amplitude=7pt}] (-4.2,10.2) --  (-1.1,10.2);
    \draw [decorate,
    decoration = {brace, amplitude=7pt}] (-0.5,10.2) --  (3.2,10.2);
    \draw [decorate,
    decoration = {brace, amplitude=4pt}] (3.6,10.2) --  (4.3,10.2);
    \draw [decorate,
    decoration = {brace, amplitude=7pt}] (-6.2,12) --  (-1.1,12);
    \draw [decorate,
    decoration = {brace, amplitude=7pt}] (3.6,12) --  (6.2,12);
    
    \draw[line width=0.5] (R0)--(R1)--(R2);
    \draw[line width=0.5][red] (R2)--(R3)--(R4)--(R5);
    \draw[line width=0.5][red] (R5)--(R6)--(R7);
    \draw[line width=0.5] (R7)--(R8)--(R9)--(R10);
    \draw[line width=0.5] (T0)--(R0);
    \draw[line width=0.5] (T1)--(R1);
    \draw[line width=0.5][red] (T2)--(R2);
    \draw[line width=0.5][red] (T3)--(R3);
    \draw[line width=0.5][red] (T4)--(R4);
    \draw[line width=0.5][red] (T5)--(R5);
    \draw[line width=0.5][red] (T6)--(R6);
    \draw[line width=0.5][red] (T7)--(R7);
    \draw[line width=0.5] (T8)--(R8);
    \draw[line width=0.5] (T9)--(R9);
    \draw[line width=0.5] (T10)--(R10);
    \draw[line width=0.5] (T11)--(R10);

    \end{tikzpicture}\hfil}
    \caption{Maximal comb $Q$ and sets $K_0^Q$ and $K_1^Q$.}
    \label{fig:maxQ}
\end{figure}
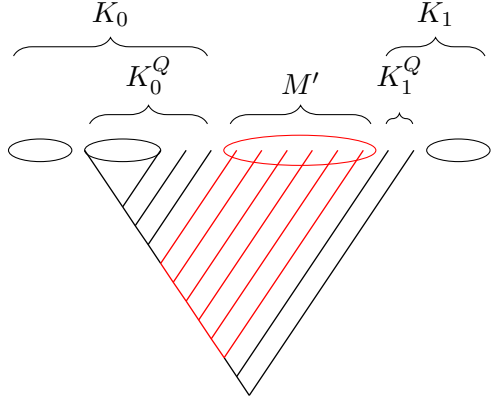

\vspace{0.2cm}

\noindent \underline{Case 2.2:} $I \in \cC_{V',M'}$ such that $M'\subseteq I$.

Let $I=K_0^I\cup M'\cup K_1^I$ be an interval in $V'$ containing $M'$ with $K_0^I\subseteq K_0$ and $K_1^I\subseteq K_1$. Suppose that $\psi(I)=K_0^I\cup P\cup K_1^I$ is closed in $X$. Since $K_0^Q\cup M'$ is closed, by the same argument of Case 1.2 (by considering $K_0^Q\cup M'$ instead of $M'$), we can show that if $x_I<\min(K_0^Q)$ or $y_I > \max(M')$, then $u_I=a(x_I,y_I)=a(x_{\Psi(I)},y_{\Psi(I)})=u_{\Psi(I)}$ and consequently $I$ is closed in $V'$.

Now suppose that $\min(K_0^Q)\leq x_I \leq \min(M')$ and $y_I=\max(M')$. Since both $M'$ and $P$ are not closed intervals in their respective ground sets, we have that $x_I\neq \min(M')$ and consequently $x_{\Psi(I)}=x_I$ and $y_{\Psi(I)}=\max(P)$. Hence, in this case, $K_0^I\subseteq K_0^Q$ and $K_I=\emptyset$, which implies that $I=K_0^I\cup M'$ and $\Psi(I)=K_0^I\cup P$. Because $Q$ is a maximal left comb with $M'\subseteq Q$, then both sets $K_0^Q$ and $K_0^Q\cup M'$ are closed in $V'$. Therefore, by Proposition \ref{prop:backclosed} the intervals $K_0^Q$ and $K_0^Q\cup P$ are closed in $X$. Fact \ref{fact:closedanc} applied to $K_0^Q$ gives us that $a(z,y_{\Psi(I)})=a(z',y_{\Psi(I)})$ for every $z,z' \in K_0^Q$. This implies that $u_{\Psi(I)}=a(x_{\Psi(I)},y_{\Psi(I)})=a(\min(K_0^Q),y_{\Psi(I)})$, i.e., $K_0^Q\cup P$ and $\Psi(I)=K_0^I\cup P$ have $u_{\Psi(I)}$ as the same common ancestor. Since $K_0^Q\cup P$ and $K_0^I\cup P$ are both closed in $X$, we obtain that $K_0^Q=K_0^I$. Thus $I=K_0^Q\cup M'$, which is closed in $V'$.
\end{proof}

The next result shows that we can always find in an edge the location of the maximal comb with coloring data containing $a(P)$. This will be extremely important, since the comb will be the only maximal comb such that coloring data changes while we run through different edges of $H'$.

\begin{proposition}\label{prop:uniquecomb}
Let $H'$ be a fixed pre-processed daisy obtained by Lemma \ref{lem:preprocessing}. There exists a unique interval $J \subseteq [k+r]$ such that for every edge $X=K_0\cup P\cup K_1=\{x_1,\ldots,x_{k+r}\}$ in $H'$, the interval $X_J=\{x_j\}_{j\in J}$ is a maximal comb of type depending only on $H'$ with 
\begin{align*}
    a(P)\subseteq F(X_J).
\end{align*}
Moreover, writing $X_J=A^{X_J}\cup B^{X_J}$ we have one of the following:
\begin{enumerate}
    \item If $H'$ satisfies statement $(1)$ of Lemma \ref{lem:preprocessing}, then $A^{X_J}\subseteq P$ and $X_J$ is the smallest maximal comb containing $P$ with non-empty coloring data.
    \item If $H'$ satisfies statement $(2)$ of Lemma \ref{lem:preprocessing}, then $X_J=I\cap X$, where $I=A\cup B$ is the maximal comb in $V'$ such that $M'\subseteq B$, and $X_J$ satisfies $P\subseteq B^{X_J}$.
\end{enumerate}
\end{proposition}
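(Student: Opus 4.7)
The plan is to split into the two cases of Lemma \ref{lem:preprocessing} and use Proposition \ref{prop:preserveclosed} to transfer the maximal-comb structure of $X$ around $P$ into the analogous structure of $V'$ around $M'$, which is intrinsic to $H'$ and hence the same across all edges $X$.

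In Case 2, let $I = A \cup B$ be the maximal comb in $V'$ with $M' \subseteq B$ (WLOG left). Then $A < M'$ forces $A \subseteq K_0$; together with the fact that $B \cap K_0$ and $B \cap K_1$ are fixed subsets of $K_0$, $K_1$, this shows that the positions of $I \cap X$ in $X$ form an interval $J$ independent of $X$, and $X_J = A \cup (B \cap X)$. The key verification is that $X_J$ is a maximal left comb in $X$ with $A^{X_J} = A$ and $B^{X_J} = B \cap X \supseteq P$. I will show that $A \cup C$ is closed in $X$ for every initial segment $C$ of $B \cap X$ by splitting on $|C|$: when $C \cap P = \emptyset$ or $P \subseteq C$, the set $A \cup C$ lies in $\cC_{V',M'}$ and is closed in $V'$ by condition (a3*) of Definition \ref{def:comb}, so Proposition \ref{prop:preserveclosed} applies. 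The intermediate subcase is the main technical obstacle: for $C$ containing only a proper initial segment of $P$, I find a $V'$-closed interval $A \cup \{b_1, \ldots, b_j\}$ sharing the common ancestor $w = a(\min A, \max C)$ with $A \cup C$, and then deduce $X(w) = V'(w) \cap X = A \cup C$. Maximality of $X_J$ in $X$ then follows because any extension would pull back via $\Psi^{-1}$ to an extension of $I$ in $V'$, contradicting maximality of $I$; moreover the type of $X_J$ (one of $3$, $4$, $5$) depends only on $|A|$ and $|B \cap X|$, hence only on $H'$.

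In Case 1, $M'$ is closed in $V'$, so $P = \Psi(M')$ is closed in $X$ by Proposition \ref{prop:preserveclosed}. I define $X_J$ to be the smallest maximal comb in $X$ containing $P$ with non-empty coloring data. Since Proposition \ref{prop:preserveclosed} induces a bijection between closed intervals in $X$ containing $P$ and closed intervals in $V'$ containing $M'$, and the properties ``maximal comb of type $t$'' and ``non-empty coloring data'' are determined by the ancestor relations in $T_{[2^N]}$, which are intrinsic, the smallest such $X_J$ is determined by $H'$ and $J$ is fixed across all edges. To show $A^{X_J} \subseteq P$, I apply Proposition \ref{prop:maximalcomb} to $P$ (closed) and $X_J$ (maximal comb, $|P| \leq |X_J|$): either $P \subseteq A^{X_J}$ or $P = A^{X_J} \cup B_0$ for an initial segment $B_0 \subseteq B^{X_J}$. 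The former case is ruled out by minimality of $X_J$: if $P \subsetneq A^{X_J}$, then $A^{X_J}$ itself hosts a strictly smaller maximal comb containing $P$ with non-empty coloring data, contradicting the choice of $X_J$.

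Finally, $a(P) \subseteq F(X_J)$ is verified by direct inspection of the coloring-data definitions. In Case 2, $P$ is a length-$r$ block of consecutive teeth of $X_J$, so the $r - 1$ ancestors of consecutive pairs in $P$ appear among $\{a(x_i, x_{i+1})\}_{i \geq |A|}$, which is contained in $F(X_J)$ for each of types $3$, $4$, $5$. In Case 1, $P = A^{X_J} \cup B_0$ covers the handle and an initial segment of the teeth of $X_J$, so the consecutive ancestors of $P$ lie within the index range of $F(X_J)$ for every applicable type. The main obstacles in the argument are the intermediate subcase of Case 2, which requires explicit manipulation of common ancestors in $T_{[2^N]}$ beyond what Proposition \ref{prop:preserveclosed} gives directly, and the minimality argument in Case 1, which must carefully exploit the laminar structure of closed intervals around $A^{X_J}$ to produce a smaller candidate maximal comb.
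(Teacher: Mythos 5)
Your Case~2 argument follows essentially the same structure as the paper's, though you route the verification through condition~(a3*) (showing that every initial segment $A\cup C$ of the candidate comb is closed in $X$) rather than through the explicit chain of $\delta$-inequalities that the paper writes down; both are valid, but your ``intermediate subcase'' is slightly imprecise: the $V'$-closed interval that restricts to $A\cup C$ in $X$ when $C$ contains a proper initial segment $\{p_1,\dots,p_j\}$ of $P$ is $A\cup\{b_1,\dots,b_{p+i_j}\}$ (where $p_j$ is the $i_j$-th tooth of $I$ lying in $M'$), not $A\cup\{b_1,\dots,b_j\}$; once that is fixed the argument goes through.

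Case~1, however, has real gaps. You \emph{define} $X_J$ as ``the smallest maximal comb in $X$ containing $P$ with non-empty coloring data'' but never establish that such an object exists. This is not automatic: $P$ might only extend inside $X$ to a maximal comb of type~2 or type~6, both of which have empty coloring data, and nothing in your sketch rules this out. The paper avoids the issue by constructing $J$ explicitly from the structure of $M'$ in $V'$, splitting on whether $M'\cup\{z_1\}$ (resp.~$M'\cup\{x_{k_0}\}$) is closed in $V'$; the two sub-cases produce a comb of type~$3$/$4$ or of type~$1$ by construction, and only \emph{afterwards} does the paper observe that this constructed comb is the minimal one. Your minimality argument to exclude $P\subsetneq A^{X_J}$ is also unsupported: you assert that $A^{X_J}$ would ``host a strictly smaller maximal comb containing $P$ with non-empty coloring data,'' but a maximal comb containing $P$ that sits inside $A^{X_J}$ could well have empty data (type~$2$ or~$6$), and in principle the only maximal combs containing $P$ with non-empty data might lie strictly above $X_J$ in the laminar family. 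Finally, the claim that ``the smallest such $X_J$ is determined by $H'$'' via the bijection of Proposition~\ref{prop:preserveclosed} is too quick: that bijection only pairs intervals in $\cC_{V',M'}$ with intervals in $\cC_{X,P}$, but the handle $A^{X_J}$, which determines the comb type, is in Case~1 a proper subset of $P$ and hence not in $\cC_{X,P}$, so its structure is not transferred by $\Psi$. The paper handles this by building $X_J$ in $V'$ as a chain $M'\subseteq M'\cup\{z_1\}\subseteq\cdots$ of closed intervals all belonging to $\cC_{V',M'}$, to which $\Psi$ does apply.
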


\begin{proof}
The idea of the proof is to identify certain maximal combs in $V'$ with maximal combs in an edge $X$. Because the structure of those maximal combs in $V'$ only depends on $H'$, we will obtain the same for the corresponding combs in $X$. Proposition \ref{prop:preserveclosed} will be useful here, since by condition (a3*) and (b3*) of Definition \ref{def:comb} a comb can be defined by looking at certain closed subintervals. The proof is split into cases depending on the structure of the tree $T_{V'}$

\vspace{0.2cm}

\noindent \underline{Case 1:} $M'$ is a closed interval in $V'$.

We will construct a maximal comb in $X$ by looking at a maximal comb in $V'$ containing $M'$. Write $K_0=\{x_1,\ldots,x_{k_0}\}$, $M'=\{y_1,\ldots, y_{m'}\}$ and $K_1=\{z_1,\ldots,z_{k_1}\}$. There are two possibilities here:

\vspace{0.2cm}

\noindent \underline{Case 1.1:} Either $M'\cup \{z_1\}$ is a closed interval in $V'$ or $M'\cup\{x_{k_0}\}$ is a closed interval in $V'$.

Suppose without loss of generality that $M'\cup\{z_1\}$ is closed in $V'$. In this case $M'\cup \{z_1\}$ is a left comb. Let $M'\cup\{z_1,\ldots,z_t\}$ be the maximal left comb obtained by extending $M'\cup \{z_1\}$. We will assume during the entire proof that $t<k_1$. For $t=k_1$, the same proof work by removing any claims and sets involving $z_{t+1}$. By condition (a3*) of Definition \ref{def:comb} and Definition \ref{def:maximalcomb}, $M'\cup\{z_1,\ldots,z_t\}$ being a maximal left comb is the same as saying that the intervals $M'$ and $M'\cup\{z_1,\ldots,z_i\}$ are closed for every $1\leq i \leq t$, but the interval $M'\cup \{z_1,\ldots,z_{t+1}\}$ is not closed.

Set $J=\{k_0+1,\ldots,k_0+r+t\}$. Let $X$ be an edge of $H'$ with petal $P$. We claim that $X_J$ is a maximal left comb in $X$ with $A^{X_J}\subseteq P$. To see that consider the intervals
\begin{align*}
    J_i=\{k_0+1,\ldots,k_0+r+i\}, \quad 0\leq i \leq t+1. 
\end{align*}
In particular $J_t=J$. Note that $X_{J_0}=M'\cap X=P$ and $X_{J_i}=(M'\cap\{z_1,\ldots,z_i\})\cap X$ for $1\leq i \leq t+1$. Thus, by applying Proposition \ref{prop:preserveclosed} with $I=M'$ and $I=M'\cup\{z_1,\ldots,z_i\}$, we obtain that $X_{J_i}$ is closed in $X$ for $0\leq i \leq t$ and $X_{J_{t+1}}$ is not closed in $X$. Hence, by condition (a3*) of Definition \ref{def:comb} and Definition \ref{def:maximalcomb}, we have that $X_J=X_{J_t}$ is a maximal left comb. Since $P=X_{J_0}\subseteq X_{J_1}\subseteq \ldots \subseteq X_{J_t}=X_J$ are all closed intervals, we have that $A^{X_J}\subseteq P$. Thus, $|A^{X_J}|\leq |P| =r$ and we have that either $X_J$ is a maximal comb of type 3 or type 4 depending on the size of $|X_J|=r+t$. Because $t$ is a parameter that depends on the size of the maximal comb in $V'$, i.e., on the structure of $H'$, we conclude that the type of $X_J$ is independent of our choice of edge $X$.

It remains to show that $a(P)\subseteq F(X_J)$ and $X_J$ is the smallest maximal comb containing $P$ with non-empty data coloring. For the first, note that $F(X_J)=a(X_J)$ because $X_J$ is of type $3$ or $4$. Thus, $a(P)\subseteq a(X_J)=F(X_J)$. For the latter, note that the only potential maximal comb smaller than $X_J$ containing $P$ is $P$ itself. However, if $P$ is a maximal comb, then it is a comb of type $2$ and therefore $F(P)=\emptyset$. Hence, $X_J$ is the smallest maximal comb containing $P$ with non-empty coloring data.

\vspace{0.2cm}

\noindent \underline{Case 1.2:} Both $M'\cup \{z_1\}$ and $M'\cup\{x_{k_0}\}$ are not closed in $V'$.

By Definition \ref{def:maximalcomb}, $M'$ is a maximal comb. Set $J=\{k_0+1,\ldots,k_0+r\}$. Note that $X_J=P$. By Proposition \ref{prop:preserveclosed}, the set $P=M'\cap X$ is closed in $X$ and $P\cup \{z_1\}=(M'\cup\{z_1\})\cap X$ and $P\cup\{x_{k_0}\}=(M'\cup \{x_{k_0}\})\cap X$ are not closed in $X$. Thus, $P$ is a maximal comb in $X$. It is clear that $A^{X_J}\subseteq X_J=P$. Since $|P|=r$ and $P\cup\{z_1\}$, $P\cup\{x_{k_0}\}$ are not closed, we have that $X_J=P$ is of type $1$. Therefore, the type of $X_J$ does not depend on $X$. Moreover, the fact that $X_J$ is of type 1 gives us that $a(P)=a(X_J)=F(X_J)$. The minimality of $X_J$ is immediate from the fact that all combs with non-empty data has size at least $r$.

\vspace{0.2cm}

\noindent \underline{Case 2:} $M'\subseteq B$ for a maximal comb $I=A\cup B$ in $V'$.

Suppose without loss of generality that $I=A\cup B$ is a maximal left comb. Let $A=\{x_{k_0-p-\ell+1},\ldots,x_{k_0-p}\}$, $B\cap K_0=\{x_{k_0-p+1},\ldots,x_{k_0}\}$ and $B\cap K_1=\{z_1,\ldots,z_t\}$. Set $J=\{k_0-p-\ell+1,\ldots,k_0+r+t\}$. Let $X$ be an edge of $H'$ with petal $P$. Clearly, $X_J=I\cap X$. We claim that $X_J$ is a maximal left comb with $P\subseteq B^{X_J}$. By Definition \ref{def:comb} and \ref{def:maximalcomb} we have that $A$ is a closed interval in $V'$, $A\setminus\{x_{k_0-p}\}$ and $I\cup\{z_{t+1}\}$ are not closed in $V'$ and
\begin{align*}
\delta(x_{k_0-p},x_{k_0-p+1})&>\ldots>\delta(x_{k_0-1},x_{k_0})>\delta(x_{k_0},y_1)>\delta(y_1,y_2)>\ldots>\delta(y_{m'-1},y_{m'})\\
&> \delta(y_{m'},z_1)>\delta(z_1,z_2)>\ldots>\delta(z_{t-1},z_t).
\end{align*}

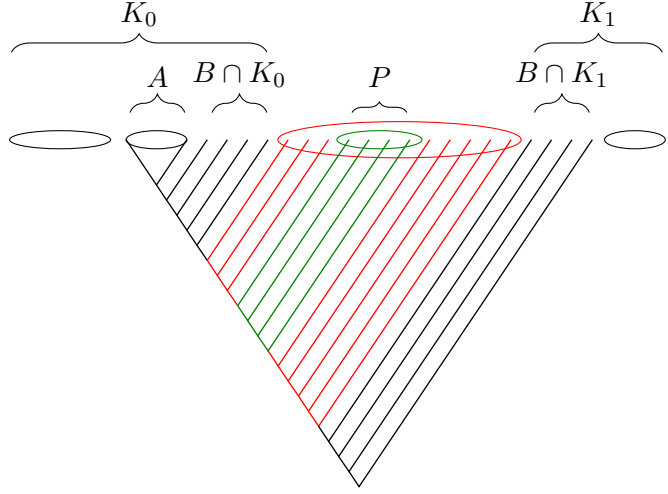
\begin{figure}[h]
\centering
{\hfil \begin{tikzpicture}[scale=0.4]
    
 	\draw (-8.5,9.5) ellipse (1.66 and 0.3);
 	\draw (-5.33,9.5) ellipse (1 and 0.3);
 	\draw (10.4,9.5) ellipse (1 and 0.3);
 	\draw[red] (2.66,9.5) ellipse (4 and 0.6);
 	\draw[green1] (2,9.5) ellipse (1.4 and 0.3);
 	
 	\coordinate (T0) at (-6.33,9.5);
 	\coordinate (T1) at (-4.33,9.5);
 	\coordinate (T2) at (-3.66,9.5);
 	\coordinate (T3) at (-3,9.5);
 	\coordinate (T4) at (-2.33,9.5);
 	\coordinate (T5) at (-1.66,9.5);
 	\coordinate (T6) at (-1,9.5);
 	\coordinate (T7) at (-0.33,9.5);
 	\coordinate (T8) at (0.33,9.5);
 	\coordinate (T9) at (1,9.5);
 	\coordinate (T10) at (1.66,9.5);
 	\coordinate (T11) at (2.33,9.5);
 	\coordinate (T12) at (3,9.5);
 	\coordinate (T13) at (3.66,9.5);
 	\coordinate (T14) at (4.33,9.5);
 	\coordinate (T15) at (5,9.5);
 	\coordinate (T16) at (5.66,9.5);
 	\coordinate (T17) at (6.33,9.5);
 	\coordinate (T18) at (7,9.5);
 	\coordinate (T19) at (7.66,9.5);
 	\coordinate (T20) at (8.33,9.5);
 	\coordinate (T21) at (9,9.5);
    
    \coordinate (R1) at (-5.33,8);
    \coordinate (R2) at (-5,7.5);
    \coordinate (R3) at (-4.66,7);
    \coordinate (R4) at (-4.33,6.5);
    \coordinate (R5) at (-4,6);
    \coordinate (R6) at (-3.66,5.5);
    \coordinate (R7) at (-3.33,5);
    \coordinate (R8) at (-3,4.5);
    \coordinate (R9) at (-2.66,4);
    \coordinate (R10) at (-2.33,3.5);
    \coordinate (R11) at (-2,3);
    \coordinate (R12) at (-1.66,2.5);
    \coordinate (R13) at (-1.33,2);
    \coordinate (R14) at (-1,1.5);
    \coordinate (R15) at (-0.66,1);
    \coordinate (R16) at (-0.33,0.5);
    \coordinate (R17) at (0,0);
    \coordinate (R18) at (0.33,-0.5);
    \coordinate (R19) at (0.66,-1);
    \coordinate (R20) at (1,-1.5);
    \coordinate (R21) at (1.33,-2);

    \node (a) at (-5.3,10.9) [above,font=\small] {$A$};
	\node (b0) at (-2.6,10.9) [above,font=\small] {$B\cap K_0$};
	\node (p) at (2,10.9) [above,font=\small] {$P$};
	\node (b1) at (8,10.9) [above,font=\small] {$B\cap K_1$};
	\node (k0) at (-5.9,12.9) [above,font=\small] {$K_0$};
	\node (k1) at (9.2,12.9) [above,font=\small] {$K_1$};
    
    \draw [decorate,
    decoration = {brace, amplitude=7pt}] (-6.2,10.4) --  (-4.4,10.4);
    \draw [decorate,
    decoration = {brace, amplitude=7pt}] (-3.5,10.4) --  (-1.7,10.4);
    \draw [decorate,
    decoration = {brace, amplitude=4pt}] (1.1,10.4) --  (2.9,10.4);
    \draw [decorate,
    decoration = {brace, amplitude=7pt}] (7.1,10.4) --  (8.9,10.4);
    \draw [decorate,
    decoration = {brace, amplitude=7pt}] (-10.1,12.4) --  (-1.7,12.4);
    \draw [decorate,
    decoration = {brace, amplitude=7pt}] (7.1,12.4) --  (11.3,12.4);
    
    \draw[line width=0.5] (T1)--(R1)--(T0);
    \draw[line width=0.5] (T2)--(R2)--(R1);
    \draw[line width=0.5] (T3)--(R3)--(R2);
    \draw[line width=0.5] (T4)--(R4)--(R3);
    \draw[line width=0.5] (T5)--(R5)--(R4);
    \draw[line width=0.5] (R5)--(R6);
    \draw[line width=0.5][red] (T6)--(R6);
    \draw[line width=0.5][red] (T7)--(R7)--(R6);
    \draw[line width=0.5][red] (T8)--(R8)--(R7);
    \draw[line width=0.5][red] (R8)--(R9);
    \draw[line width=0.5][green1] (T9)--(R9);
    \draw[line width=0.5][green1] (T10)--(R10)--(R9);
    \draw[line width=0.5][green1] (T11)--(R11)--(R10);
    \draw[line width=0.5][green1] (T12)--(R12)--(R11);
    \draw[line width=0.5][red] (T13)--(R13)--(R12);
    \draw[line width=0.5][red] (T14)--(R14)--(R13);
    \draw[line width=0.5][red] (T15)--(R15)--(R14);
    \draw[line width=0.5][red] (T16)--(R16)--(R15);
    \draw[line width=0.5][red] (T17)--(R17)--(R16);
    \draw[line width=0.5] (T18)--(R18)--(R17);
    \draw[line width=0.5] (T19)--(R19)--(R18);
    \draw[line width=0.5] (T20)--(R20)--(R19);
    \draw[line width=0.5] (T21)--(R21)--(R20);

    \end{tikzpicture}\hfil}
    \caption{Case $2$ of Proposition \ref{prop:uniquecomb}}
    \label{fig:maxQ}
\end{figure}

Let $P=\{y_{i_1},\ldots,y_{i_r}\}$. By Proposition \ref{prop:preserveclosed}, the set $A=A\cap X$ is closed in $X$ and the sets $A\setminus \{x_{k_0-p}\}=(A\setminus \{x_{k_0-p}\})\cap X$ and $X_J\cup \{z_{t_1}\}=(I\cup \{z_{t+1}\})\cap X$ are not closed in $X$. Moreover, since $P\subseteq M'$, we have that
\begin{align*}
\delta(x_{k_0-p},x_{k_0-p+1})&>\ldots>\delta(x_{k_0-1},x_{k_0})>\delta(x_{k_0},y_{i_1})>\delta(y_{i_1},y_{i_2})>\ldots>\delta(y_{i_{r-1}},y_{i_r})\\
&> \delta(y_{i_r},z_1)>\delta(z_1,z_2)>\ldots>\delta(z_{t-1},z_t).
\end{align*}
Thus, by Definition \ref{def:comb} and Definition \ref{def:maximalcomb}, the interval $X_J$ is a maximal left comb in $X$ with $A^{X_J}=A$ and $|B^{X_J}|=r+t+p$. Since $A\subseteq K_0$, we obtain that $P\subseteq B^{X_J}$. Note that to determine the type of $X_J$ we need to know the sizes of $X_J$, $A^{X_J}$ and $B^{X_J}$. None of this parameters depends on the choice of $X$. Hence, the type of $X_J$ is independent of $X$. Finally, because $|B^{X_J}|\geq |P|\geq r$, we obtain that $|X_J|\geq r+1$ and consequently the comb $X_J$ is of type $3$, $4$ or $5$. If it is of type $3$ or $4$, then $a(P)\subseteq a(X_J)=F(X_J)$. If it is of type $5$, then $a(P)\subseteq a(\max(A^{X_J})\cup B^{X_J})=F(X_J)$.
\end{proof}

To finish the section we prove that the maximal comb determined by the set $J$ is the comb that essentially determines the color of the entire edge.

\begin{proposition}\label{prop:simplecolor}
Let $X=\{x_1,\ldots,x_{k+r}\}$, $X'=\{x'_1,\ldots,x'_{k+r}\}$ be two edges in $H'$ and let $X_J=\{x_j\}_{j\in J}$, $X'_J=\{x'_j\}_{j\in J}$. If $\chi(X)=\chi(X')$, then $\chi_0(X_J)=\chi_0(X'_J)$.
\end{proposition}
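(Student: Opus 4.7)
The plan is to establish a bijection $\Phi : \cI_X \setminus \{X_J\} \to \cI_{X'} \setminus \{X'_J\}$ satisfying $\chi_0(\Phi(I)) = \chi_0(I)$ for every $I$ in the domain. Once this is in hand, the definition $\chi(X) = \sum_{I \in \cI_X} \chi_0(I) \pmod 2$ (and likewise for $X'$), combined with the hypothesis $\chi(X) = \chi(X')$, forces $\chi_0(X_J) = \chi_0(X'_J)$ directly.

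The first step is to check that every maximal comb $I \neq X_J$ of $X$ lies in $\cC_{X,P}$. Applying Proposition \ref{prop:maximalcomb} to $I$ and $X_J$ (using the smaller one as the first argument) and combining with the descriptions of $X_J$ in Proposition \ref{prop:uniquecomb} (case (1): $A^{X_J} \subseteq P \subseteq X_J$; case (2): $A^{X_J} \subseteq K_0$ and $P \subseteq B^{X_J}$), exactly one of $I \cap P = \emptyset$, $P \subseteq I$, or $I \subsetneq P$ must hold. The last possibility arises only in case (1); in that situation $|I| < |P| = r$, so $I$ is of type $2$ or $6$ and $\chi_0(I) = 0$. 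By the symmetric analysis applied to $X'$, the analogous combs inside $P'$ also contribute $0$, so such combs may be safely removed from both sides before building $\Phi$.

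On the remaining combs ($I \cap P = \emptyset$ or $P \subseteq I$), I would define $\Phi(I) = \Psi_{X'}(\Psi_X^{-1}(I))$ using Proposition \ref{prop:preserveclosed}, so that $\Phi$ fixes $I$ when $I \cap P = \emptyset$ and replaces the block $P$ by $P'$ when $P \subseteq I$. The alternative conditions (a3*) and (b3*) of Definition \ref{def:comb} describe combs as nested sequences of closed intervals, so the closedness preservation in Proposition \ref{prop:preserveclosed} transfers the whole comb structure of $I$ across $\Psi_X^{-1}$ to a maximal comb in $V'$ and then across $\Psi_{X'}$ to a maximal comb $\Phi(I)$ in $X'$. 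Type preservation follows from $|\Phi(I)| = |I|$ and $|A^{\Phi(I)}| = |A^I|$, which hold because $|P'| = |P| = r$.

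For the coloring data, Proposition \ref{prop:uniqueinfo} together with $a(P) \subseteq F(X_J)$ from Proposition \ref{prop:uniquecomb} yields $F(I) \cap a(P) = \emptyset$ whenever $I \neq X_J$; combined with Fact \ref{fact:closedanc} (applied to the closed interval $M'$ in case (1), and to the closed interval $K_0^Q \cup M'$ in case (2)), this shows that the ancestors appearing in $F(I)$ are stable under the swap $P \leftrightarrow P'$, giving $F(\Phi(I)) = F(I)$. Proposition \ref{prop:equalinfo} then delivers $\chi_0(\Phi(I)) = \chi_0(I)$, completing the argument. The main obstacle is verifying that maximality and type transfer correctly across $\Phi$, especially when $P \subseteq I$: the key sub-step is ruling out that $P$ is split between the handle and teeth of $I$, since any consecutive petal pair in the teeth would place an ancestor in $F(I) \cap a(P)$ and contradict $F(I) \cap F(X_J) = \emptyset$. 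This forces every such $I$ to be of type $5$ with $P \subseteq A^I$, after which the sizes $|A^I|$, $|B^I|$ and $|I|$ are manifestly preserved by the replacement $P \mapsto P'$ and the bijection is in place.
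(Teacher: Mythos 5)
Your high-level approach matches the paper's: restrict to combs with non-empty coloring data, show that every such $I\neq X_J$ is either disjoint from $P$ or contains $P$ inside its handle (forcing type $5$), and build a color-preserving bijection $\Phi$ that fixes $I$ when $I\cap P=\emptyset$ and replaces $P$ by $P'$ when $P\subseteq I$. The identification $\Phi=\Psi_{X'}\circ\Psi_X^{-1}$ on $\cC_{X,P}$ is the right map, the deduction $F(I)\cap a(P)=\emptyset$ from Propositions~\ref{prop:uniqueinfo} and~\ref{prop:uniquecomb} correctly forces $P\subseteq A^I$ for type-$5$ $I\supseteq P$, and the use of Fact~\ref{fact:closedanc} to get $F(\Phi(I))=F(I)$ and then $\chi_0(\Phi(I))=\chi_0(I)$ via Proposition~\ref{prop:equalinfo} is exactly the paper's route.

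The gap is in the step you yourself flag as ``the main obstacle'': showing that $\Phi(I)$ is actually a \emph{maximal} comb in $X'$, so that it lies in $\cI_{X'}$ at all. Proposition~\ref{prop:preserveclosed} transfers closedness only for intervals in $\cC_{X,P}$ and $\cC_{X',P'}$. This does show that $\Phi(I)$ is \emph{a} comb, because the nested closed intervals of conditions (a3*)/(b3*) are all in $\cC_{X,P}$. But maximality and the comb partition involve one-element extensions and deletions such as $I\cup\{y\}$ (for $y$ the boundary vertex of $X$ just outside $I$) and $A\setminus\{\max A\}$, and precisely in the delicate cases --- $y=\min(P)$ for $I\subseteq K_0$ disjoint from $P$, or $\max A\in P$ when $P\subseteq A$ --- these sets are \emph{not} in $\cC_{X,P}$ (for $r\geq 2$), so Proposition~\ref{prop:preserveclosed} says nothing about whether the corresponding sets in $X'$ are closed. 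This is where the paper spends most of its effort, running a case analysis on whether $M'$ is closed in $V'$ or lies in the teeth of a maximal comb $Q$, and invoking Fact~\ref{fact:closedanc}, Proposition~\ref{prop:closed}, and Proposition~\ref{prop:maximalcomb} to settle each boundary extension. Relatedly, your claim that type preservation ``follows from $|\Phi(I)|=|I|$ and $|A^{\Phi(I)}|=|A^I|$'' is circular as stated: those sizes pin down the type only after you already know $\Phi(I)$ is a maximal comb with the asserted handle/teeth decomposition, which is exactly what still needs to be proved.
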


\begin{proof}
Let $P, P'\subseteq M'$ be the petals of $X$ and $X'$, respectively. By definition, $\chi(X)=\chi(X')$ implies that
\begin{align*}
    \sum_{I\in \cI_X}\chi_0(I)=\sum_{I'\in \cI_{X'}}\chi_0(I') \pmod{2}.
\end{align*}
By the definition of coloring data, if $F(I)=\emptyset$, then $\chi_0(I)=0$. Thus, we may rewrite the equality above as
\begin{align}\label{eq1}
    \sum_{\substack{I\in\cI_X\\F(I)\neq\emptyset}}\chi_0(I)=\sum_{\substack{I'\in \cI_{X'}\\F(I')\neq\emptyset}} \chi_0(I) \pmod{2}.
\end{align}

We claim that if $I=A\cup B$ is a maximal comb of $X$ with $F(I)\neq\emptyset$, then either $I\cap P=\emptyset$ or $P\subseteq I$. Note that, in the coloring defined in Subsection \ref{subsec:variant}, whenever $F(I)=\emptyset$, we have that $|I|\geq |P|=r$. By Lemma \ref{lem:preprocessing}, the daisy $H'$ satisfies one of the following conditions: Either $M'$ is a closed interval in $V':=V(H')$ or there exists a maximal comb $Q=A^Q\cup B^Q$ such that $M'\subseteq B^Q$. If $M'$ is closed in $V'$, then by Proposition \ref{prop:preserveclosed} the petal $P=M'\cap X$ is a closed interval in $X$. Thus, Proposition \ref{prop:closed} applied to the closed intervals $I$ and $P$ gives the desired result that either $I\cap P=\emptyset$ or $P\subseteq I$. Now suppose that we are in Condition (2) of Lemma \ref{lem:preprocessing}. By Proposition \ref{prop:uniquecomb}, we have that $P\subseteq B^{X_J}$, where $B^{X_J}$ is the ``teeth" part of the comb $X_J=A^{X_J}\cup B^{X_J}$. Thus, Proposition \ref{prop:maximalcomb} applied to the maximal combs $I$ and $X_J$ implies that $I\cap X_J=\emptyset$, $I\subseteq A^{X_J}$ or $X_J\subseteq A \subseteq I$. In the first two cases we obtain $I\cap P=\emptyset$, while in the latter we have $P\subseteq I$.

The idea of the proof of Proposition \ref{prop:simplecolor} is to show that there exists a bijection between $\{I\in \cI_X:\: F(I)\neq \emptyset\}$ and $\{I'\in \cI_{X'}:\: F(I')\neq \emptyset\}$ such that $X_J$ is sent to $X'_J$ and every $I\neq X_J$ is sent to an $I'\neq X'_J$ with $\chi_0(I)=\chi_0(I')$. Hence, after some cancellation, we obtain from equation (\ref{eq1}) that $\chi_0(X_J)=\chi_0(X'_J)$. Based on the last paragraph, we construct such a bijection by splitting $\{I\in \cI_X:\: F(I)\neq \emptyset\}$ into two parts:

\vspace{0.2cm}

\noindent \underline{Case 1:} $I\in \cI_X$ is a maximal comb of $X$ with $F(I)\neq \emptyset$ and $I\cap P=\emptyset$.

We claim that $I\in \cI_{X'}$ is a maximal comb in $X'$ of the same type and consequently $\chi_0(I)$ is the same in $X$ and $X'$. Assume without loss of generality that $I\subseteq K_0$. Let $x$ be the element preceding $\min(I)$ in $X$ (In the case that such $x$ does not exists, we simply take $x=\min(I)$). Let $y$ be the element after $\max(I)$ in $X$. Similarly, define $x'$ as the element before $\min(I)$ in $X'$ and $y'$ as the element after $\max(I)$ in $X'$. Since $I\subseteq K_0$, clearly $x=x'$. However, $y$ and $y'$ are not necessarily the same. By conditions (a3*) and (b3*) of Definition \ref{def:comb} and Definition \ref{def:maximalcomb}, to prove that $I\in \cI_{X'}$ is enough to check that $I\cup\{x\}$, $L\subseteq I$, $I\cup\{y\}$ are closed intervals in $X$ if and only if $I\cup \{x'\}$, $L\subseteq I$ and $I\cup\{y'\}$ are closed intervals in $X'$, respectively. Since $F(I)=\neq \emptyset$, we have that $I$ is of type 1, 3, 4 or 5. Note that one can distinguish between this types by determining the size of the ``handle'' and `teeth''of $I$. Thus, by checking the properties above, we also obtain that $I$ have the same type in $X$ and $X'$.

For an interval $L\subseteq I$, by Proposition \ref{prop:preserveclosed} we have that $L=L\cap X$ is a closed interval in $X$ if and only if it is a closed interval in $V'$. Another application of Proposition \ref{prop:preserveclosed} gives us that $L=L\cap X'$ is a closed interval if it is closed in $V'$. Hence, $L$ is closed in $X$ if and only if it is closed in $X'$. Similarly, the same argument works for $I\cup\{x\}$ and $I\cup \{x'\}$, because $x=x'\notin M'$. Moreover, if $y\in K_0$, then $y'=y\notin M'$ and we also obtain that $I\cup \{y\}$ is closed in $X$ if and only if $I\cup\{y'\}$ is closed in $X'$. Hence, the only case remaining is when $y\notin K_0$, i,e, $y=\min(P)$ and $y'=\min(P')$. 

We split the argument into two cases depending on the structure given by Lemma \ref{lem:preprocessing}. Suppose that $M'$ is closed in $V'$ and let $u=a(\min(M'),\max(M'))$ be the common ancestor of $M'$. By Fact \ref{fact:closedanc}, we have that $a(z,y)=a(z,y')=a(z,u)$ for every $z\in I$. Therefore, the entire set $M'$ is descendant of the common ancestors of $I\cup\{y\}$ and $I\cup \{y'\}$, which by condition (ii*) of Definition \ref{def:closed} implies that both sets are not closed. Now suppose that $M'\subseteq B^Q$ for some maximal comb $Q=A^Q\cup B^Q$. Since $I$ is a closed interval in $X$, then by Proposition \ref{prop:preserveclosed} it is a closed interval in $V'$. Thus, by Proposition \ref{prop:closed}, applied to $I$ and the maximal comb $Q$, one of the following three possibilities holds: $I\cap Q=\emptyset$, $I\subseteq Q$ or $Q\subseteq I$. Clearly, the last possibility cannot hold, since $P\subseteq Q$ and $I\cap P=\emptyset$. Suppose that $I\cap Q=\emptyset$. By Proposition \ref{prop:preserveclosed}, the interval $Q\cap X$ is a closed interval in $X$. Since $(I\cup \{y\})\cap (Q\cap X)=\{y\}\neq \emptyset$ and $\{y\}\neq Q\cap X$, we obtain by Proposition \ref{prop:closed} that $I\cup\{y\}$ is not closed in $X$. Similarly, $I\cup\{y'\}$ is not closed in $X'$.

Now we handle with the case that $I\subseteq Q$. By Proposition \ref{prop:maximalcomb}, either $I\subseteq A^Q$ or $I=A\cup B$ is a comb with $A=A^Q$ and $B\subseteq B^Q$. Since $I\subseteq K_0$, we have that $Q$ is a maximal left comb. Let $K_0^Q=K_0\cap Q$, $B^Q=\{z_1,\ldots,z_b\}$ and let $Q_{z_i}=A^Q\cup\{z_1,\ldots,z_i\}$ be the subcomb of $Q$ ending in $z_i$. By condition (a3*) of Definition \ref{def:comb}, we have that $A^Q$ and $Q_z$ are closed in $V'$ for every $z\in B^Q$. Moreover, note that $\min(I)\in A^Q$. Let $v=a(\min(A^Q),\max(A^Q)$ and $w=a(\min(I),y)$ be the common ancestor of $A^Q$ and $I\cup\{y\}$, respectively. By Fact \ref{fact:closedanc} applied to $A^Q$, we have that $a(\min(I),x)=a(v,x)=a(\min(A^Q),x)$ for every $x\in M'$. Thus, $X(w)=V'(w)\cap X=Q_y\cap X=K_0^Q\cup\{y\}$, which implies that $I\cup\{y\}$ is a closed interval in $X$ if and only if $I=K_0^Q$. Similarly, $I\cup \{y'\}$ is a closed interval in $X'$ if and only if $I=K_0^Q$. Hence, $I\cup\{y\}$ is closed in $X$ if and only if $I\cup\{y'\}$ is closed in $X'$.

\vspace{0.2cm}

\noindent \underline{Case 2:} $I\in \cI_X$ is a maximal comb of $X$ with $F(I)\neq \emptyset$, $P\subseteq I$ and $I\neq X_J$.

In this case, by Proposition \ref{prop:uniqueinfo} and Proposition \ref{prop:uniquecomb}, we have that $F(I)\cap a(P)=\emptyset$ and consequently $F(I)\neq a(I)$. Thus, by our coloring, we obtain that $I$ is of type $5$, i.e., $I=A\cup B$ is a maximal left/right comb with $|A|>r$ and $|B|\geq r$. We may assume that $I$ is a maximal left comb. Hence, $F(I)=a(\{\max(A)\}\cup B)$ and the fact that $F(I)\cap a(P)=\emptyset$ implies that $P\subseteq A$. Write $A=K_A\cup P$ with $K_A\subseteq K_0\cup K_1$. We claim that $I'=K_A\cup P'\cup B$ is a maximal comb of type $5$ with set of ``teeth" $B'=B$ and ``handle'' $A'=K_A\cup P'$.

Write $B=\{y_1,\ldots,y_t\}$. Let $x=\max(A)$, $x'=\max(A')$ and let $z$ be the element coming after $B$ in $V'$ (In case that such element does note exist, we take $z=\max(B)$). By condition (a3*) of Definition \ref{def:comb} and Definition \ref{def:maximalcomb} to prove that $I=A'\cup B'$ is a maximal comb of type $5$ with $A'=K_A\cup P$ and $B'=B$ it is enough to prove that $A'$ and $A'\cup \{y_1,\ldots,y_i\}$ are closed in $X'$ for every $1\leq i \leq t$, $A'\setminus\{x'\}$ is not closed in $X'$ and $A'\cup \{y_1,\ldots, y_t,z\}$ is closed if and only if $A\cup\{y_1,\ldots,y_t,z\}$ is closed in $X$. Applying Proposition \ref{prop:preserveclosed} with $X$ and $V'$ and then $V'$ and $X'$ gives us that $A$, $A\cup\{y_1,\ldots,y_i\}$ and $A\cup\{y_1,\ldots,y_t,z\}$ are closed in $X$ if and only if $A'$, $A'\cup \{y_1,\ldots,y_i\}$ and $A'\cup\{y_1,\ldots,y_t,z\}$ are closed in $X'$, respectively. Since $I$ is a maximal comb in $X$, this implies that $A'$, $A'\cup\{y_1,\ldots,y_i\}$ are closed in $X'$ for $1\leq i \leq t$. If $x\notin P$, then $x=x'$ and by the same argument $A'\setminus \{x'\}$ is not closed in $X'$.

It remains to deal with the case that $x\in P$, i.e., $x=\max(P)$ and $x'=\max(P')$. The proof is split into two cases depending on the structure of $H'$ given by Lemma \ref{lem:preprocessing}. If $M'$ is a closed interval in $V'$, then by Proposition \ref{prop:preserveclosed} the interval $P'$ is closed in $X'$. The intersection $A'\setminus \{x'\}$ is proper since $|A'|=|A|>r=|P'|$ and $x'\in P'$. Therefore, by Proposition \ref{prop:closed}, we have that $A'\setminus\{x'\}$ is not closed in $X'$.

Now suppose that $M'\subseteq B^Q$ for some maximal comb $Q=A^Q\cup B^Q$. By Proposition \ref{prop:uniquecomb}, the maximal combs $X_J$ and $X'_J$ satisfies $P\subseteq B^{X_J}$ and $P'\subseteq B^{X'_J}$. Applying Proposition \ref{prop:maximalcomb} to the maximal combs $I$ and $X_J$ gives us that either $I\subseteq A^{X_J}$ or $X_J\subseteq A$. Since $P\cap A^{X_J}=\emptyset$, it follows that $X_J\subseteq A$. Because $x=\max(A)\in P$, we have that $\max(K_A)<\min(P)$. This implies that $X_J$ is a maximal left comb. Hence, by Proposition \ref{prop:uniquecomb} both $Q$ and $X'_J$ are maximal left combs.

We claim that $A'\setminus\{x'\} \cap X'_J$ is a proper intersection. Since $|X'_J|=|X_J|\leq |A|=|A'|$, by Proposition \ref{prop:closed} applied to $A'$ and $X'_J$, we have that $X'_J\subseteq A'$. Note that we already proved for $1\leq i \leq t$ that $A'$ and $A'\cup\{y_1,\ldots,y_i\}$ are closed in $X'$. Hence, by the maximality of $X'_J$ we have that $A'\neq X'_J$ (otherwise we could extend to the left comb $X'_J\cup B$). Thus, $X'_J$ is strictly contained in $A'$, which implies that $K_A\setminus X'_J\neq \emptyset$. This concludes that $A'\setminus\{x'\}\cap X'_J$ is proper and by Proposition \ref{prop:closed} the interval $A\setminus\{x\}$ is not closed in $X'$.

Therefore, the interval $I'=A'\cup B'$ is a maximal left comb in $X'$ of type $5$ with $A'=K_A\cup P'$ and $B'=B$. It is not difficult to check (by Proposition \ref{prop:preserveclosed}) that the correspondence between $I$ and $I'$ is a bijection. Moreover, since $A=K_A\cup P$ is closed in $X$, we obtain by Proposition \ref{prop:preserveclosed} that $K_A\cup M'$ is closed in $V'$. It follows by Fact \ref{fact:closedanc} that $a(x,y_1)=a(x',y_1)$ and consequently thar $F(I)=a(B\cup\{x\})=a(B'\cup\{x'\})=F(I')$. Hence, by Proposition \ref{prop:equalinfo} we have $\chi_0(I)=\chi_0(I')$.
\end{proof}

\section{Main proof}\label{sec:proof}

The proof of Theorem \ref{thm:daisyramsey} follows by a simple induction of the following stepping up theorem.

\begin{theorem}\label{th:daisystepup}
Let $m\geq 100kr^2$, $N=\min_{0\leq j\leq k}\{ D_{r-1}^{\smp}(\frac{1}{5}k^{-1/2}m^{1/2},j)\}$ be integers and let $\{\phi_i\}_{r-1\leq i \leq k+r-1}$ be a family of colorings $\phi_i:[N]^{(i)}\rightarrow \{0,1\}$ without a monochromatic copy of a simple $(r-1,\frac{1}{5}k^{-1/2}m^{1/2},i-r+1)$-daisy. Then, the coloring $\chi:[2^N]^{(k+r)}\rightarrow \{0,1\}$ described in Subsection \ref{subsec:variant} does not contain a monochromatic simple $(r,m,k)$-daisy.
\end{theorem}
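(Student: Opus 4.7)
The plan is to argue by contradiction. Assume that a monochromatic simple $(r,m,k)$-daisy $H$ exists under $\chi$. Applying Lemma~\ref{lem:preprocessing} yields a monochromatic simple $(r,m_1,k)$-subdaisy $H'$ with $m_1=\frac{1}{2}k^{-1/2}m^{1/2}$ satisfying one of the two structural alternatives. Then Proposition~\ref{prop:uniquecomb} fixes an index set $J\subseteq[k+r]$ so that every $X_J$ is a maximal comb of the same fixed type with $a(P)\subseteq F(X_J)$, and Proposition~\ref{prop:simplecolor} produces a common color $c=\chi_0(X_J)$ for all edges $X$ of $H'$.

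Next I run a variant of the Erd\H{o}s--Hajnal--Rado stepping-up argument from Subsection~\ref{subsec:ehrstep} on the $\delta$-projection of $M'$. When $X_J$ has type $1$, $3$ or $4$, the definition of $\chi_0$ examines the local behavior of $\delta^{X_J}_{r-3},\delta^{X_J}_{r-2},\delta^{X_J}_{r-1}$; after matching indices using the structural information provided by Proposition~\ref{prop:uniquecomb}, these three positions straddle or lie inside the varying petal $P$, and constancy of $c$ forbids strict local maxima (if $c=0$) or minima (if $c=1$) at all petal-realizable positions of the sequence $\{\delta^{M'}_i\}$. As in the proof of Proposition~\ref{prop:findcomb}, this forces a monotone run of length at least $(m_1-r+1)/2$, giving a $1$-comb $M''\subseteq M'$ of size at least $(m_1-r+3)/2$; the hypothesis $m\geq 100kr^2$ makes this at least $\frac{1}{5}k^{-1/2}m^{1/2}+1$. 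In the remaining case, $X_J$ of type $5$, the coloring has no local-extremum clause and I simply set $M''=M'$.

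Finally, inside the $1$-comb $M''$ the ``otherwise'' clause of $\chi_0$ applies, yielding $c=\phi_i(D)$ for an appropriate $i\in\{r-1,\dots,k+r-1\}$, where $D$ is either $\delta(X_J)$ (types $1$, $3$, $4$) or $\delta(\{\max A^{X_J}\}\cup B^{X_J})$ (type $5$). Using that in a $1$-comb $\delta(y_a,y_b)$ depends on exactly one of its endpoints, $D$ splits into a fixed kernel $C$ (contributed by $A^{X_J}$, $B^{X_J}\cap K_0$, $B^{X_J}\cap K_1$ and the boundary ancestors) and a variable part in bijection with the $r$ indices of $P$ inside $M''$. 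Restricting $P$ to always contain a distinguished endpoint of $M''$ (the leftmost element for a left comb, the rightmost for a right comb) fixes one additional coordinate and leaves the remaining $r-1$ coordinates ranging freely over $(r-1)$-subsets of a set of size $|M''|-1\geq\frac{1}{5}k^{-1/2}m^{1/2}$. Strict monotonicity of $\delta$ along the maximal comb places $C$ entirely below or entirely above this universe in $[N]$, so $c=\phi_i(\cdot)$ exhibits a monochromatic simple $(r-1,\frac{1}{5}k^{-1/2}m^{1/2},i-r+1)$-daisy in $\phi_i$, contradicting the assumption on the family $\{\phi_i\}$.

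The main technical obstacle is the case analysis in the second paragraph: in each structural alternative of Lemma~\ref{lem:preprocessing} and each possible type of $X_J$ one must verify that the three critical positions $r-3,r-2,r-1$ of $\delta^{X_J}$ lie where the petal varies and that every strict local extremum of $\{\delta^{M'}_i\}$ in an appropriate interior range is realizable by some choice of $P$. A secondary bookkeeping task in the third paragraph is to compute $i$ and $j=i-r+1$ in each subcase (for instance $i=r+t-1$ in Case~$1.1$ of Proposition~\ref{prop:uniquecomb} and $i=|B^{X_J}|$ in type $5$) and to check that $C$ lies on the correct side of the universe, which reduces to strict monotonicity of $\delta$ within a maximal comb.
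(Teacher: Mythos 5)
Your proposal follows the same skeleton as the paper (preprocess via Lemma~\ref{lem:preprocessing}, locate a unique interval $J$ via Proposition~\ref{prop:uniquecomb}, observe $\chi_0(X_J)$ is constant via Proposition~\ref{prop:simplecolor}, and close by exhibiting a monochromatic simple daisy in the projection), but the second and third paragraphs contain a genuine gap in the structural alternative where $M'$ lies in the teeth of a maximal comb $I=A\cup B$ in $V(H')$ (alternative~(2) of Lemma~\ref{lem:preprocessing}). In that case Proposition~\ref{prop:uniquecomb} gives $X_J = A\cup B_0\cup P\cup B_1$ with the petal $P$ sitting \emph{strictly inside} the teeth, so when $|A|+|B_0|\geq r-1$ the positions $r-3,r-2,r-1$ of $\delta^{X_J}$ lie entirely in the fixed handle/left-teeth portion $A\cup B_0$. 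They do not ``straddle or lie inside the varying petal,'' their local-extremum pattern is the same for every choice of $P$, and so the constancy of $c$ \emph{does not forbid} any extremum; the $1$-comb extraction (your use of Proposition~\ref{prop:findcomb}) is simply inapplicable there, and $M''=M'$ must be used directly since $M'$ already sits monotonically in the teeth.

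A second, related problem is in your description of $D$: you assert that for types $1$, $3$, $4$ the ``otherwise'' clause gives $c=\phi_i(\delta(X_J))$. This is false precisely in the subcase above: when $X_J$ has type $4$ and the fixed pattern of $\delta^{X_J}_{r-3},\delta^{X_J}_{r-2},\delta^{X_J}_{r-1}$ is a strict local extremum, the applicable clause is $c=\phi_{s-r}\bigl(\{\delta_r^{X_J},\dots,\delta_{s-1}^{X_J}\}\bigr)$ (or this $+1$), not $\phi_{|\delta(X_J)|}(\delta(X_J))$. The paper's Case~2.2 handles this by observing that the extremum pattern is the \emph{same} for every edge, so all edges fall into the same type-$4$ clause, and then reading off a monochromatic daisy from the truncated projection $\{\delta_r^{X_J},\dots,\delta_{s-1}^{X_J}\}$. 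Your argument would need that separate branch; as written it would send you down the wrong projection and fail to produce the contradiction. Everything else (Case~1 where $M'$ is closed, type~$5$, and the endpoint-restriction trick to pass from an $r$-petal projection to an $(r-1)$-petal daisy) is essentially the paper's argument and is sound.
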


\begin{proof}
Suppose by contradiction that there exists a monochromatic simple $(r,m,k)$-daisy $H$ in $[2^N]^{(k+r)}$ with kernel $K=K_0\cup K_1$ of size $k$, universe of petals $M$ of size $m$ and $K_0<M<K_1$. By Lemma \ref{lem:preprocessing}, we obtain a monochromatic simple $(r,\frac{1}{2}k^{-1/2}m^{1/2},k)$-daisy $H'$ with same kernel and universe of petals $M'=\{y_1,\ldots,y_{m'}\} \subseteq M$ of size $m'=\frac{1}{2}k^{-1/2}m^{1/2}$ satisfying that either $M'$ is a closed interval in $V'=V(H')$ or $M'$ is part of the ``teeth" of a maximal comb $I=A\cup B$, i.e., $M'\subseteq B$.

Note that every edge $X\in H'$ can be written in the form $X=K_0\cup P\cup K_1$ where $P\in (M')^{(r)}$ is a petal of $H'$. Since $H'$ is monochromatic, we have that
\begin{align*}
    \chi(X)=\sum_{I\subseteq \cI_X} \chi_0(I) \pmod{2}
\end{align*}
is constant, for every $X \in H'$. Thus, by Propositions \ref{prop:uniquecomb} and \ref{prop:simplecolor}, there exists a unique interval $J\subseteq [k+r]$ such that for every $X \in E(H')$ the interval $X_{J}=\{x_j\}_{j\in J}$ is maximal comb with color $\chi_0(X_J)$ constant. 

As in the proof give in Subsection \ref{subsec:ehrstep}, our goal is to use the fact that the combs $X_J$ are monochromatic with respect to $\chi_0$ to find a large $1$-comb. Let $t=|J|-r$ and let $G$ be the simple $(r,\frac{1}{2}k^{-1/2}m^{1/2},t)$-daisy constructed by taking as edges the combs $X_J$ for every edge $X \in H'$. To be more precise, let $K_J$ be the subset of $t$ vertices of $K_0\cup K_1$ in the interval $J$. Note that every comb $X_J$ can be partitioned into $X_J=K_J\cup P$, where $P\subseteq M'$ is the petal of $X$. We define $G$ as the simple $(r,\frac{1}{2}k^{-1/2}m^{1/2},t)$-daisy given by
\begin{align*}
    V(G)&=K_J\cup M'\\
    G&=\{X_{J}:\: X\in H'\}  
\end{align*}
As discussed in the last paragraph the $(t+r)$-graph $G$ is monochromatic under the coloring $\chi_0$. The following lemma is a variant of Proposition \ref{prop:findcomb} for simple daisies.

\begin{proposition}\label{prop: findcombdaisy} 
If $M'$ is a closed interval in $V(H')$ and $G$ is monochromatic with respect to the coloring $\chi_0$, then there exists an interval $M''\subseteq M'$ of size $|M''|\geq (|M'|-r+6)/2$ such that $M''$ is a $1$-comb in $V'$.
\end{proposition}

\begin{proof}
By Proposition \ref{prop:uniquecomb}, all the edges $X_J$ of $G$ are combs of the same type. Thus we may assume without loss of generality that $X_J$ is either a broken comb or a $\ell$-left comb in $X$. Since $M'$ is closed, by the same proposition we obtain that $A^{X_J}\subseteq P$ and consequently $K_J \subseteq B^{X_J}$ for every edge $X \in H'$. Therefore, we either have $K_J=\emptyset$ (and $X_J$ is a broken comb) or $P< K_J$ for every $X$, which implies that $K_J\subseteq K_1$, i.e., $M'< K_J$. Moreover, if $X_J$ is an $\ell$-left comb, then $A^{X_J}\subseteq P$ implies that $\ell\leq r$. This implies that $X_J$ is either of type $1$, $3$ or $4$.

We split the proof into two cases according to the size of $t=|K_J|$. Write $M'=\{y_1,\ldots,y_{m'}\}$, $K_J=\{y_{m'+1},\ldots,y_{m'+t}\}$ (if $K_J\neq \emptyset$) and $\delta_i^G=\delta(y_i,y_{i+1})$ for $1\leq i \leq m'+t-1$.

\vspace{0.2cm}

\noindent \underline{Case 1:} $0\leq t\leq r-2$.

Since $|X_J|=r+t\leq 2r-2$, we obtain that $X_J$ is either of type $1$ or $3$. The proof follow the same lines of the proof of Proposition \ref{prop:findcomb}. Write $P=\{y_{i_1},\ldots,y_{i_r}\}\subseteq M'$ for indices $1\leq i_1<\ldots<i_r\leq m'$. Suppose without loss of generality that $G$ is monochromatic of color $0$, i.e., $\chi_0(X_J)=0$ for every $X_J \in G$. Thus, by the definition of $\chi_0$ for combs of type $1$ and $3$, we do not have that $\delta_{i_{r-3}}^G<\delta_{i_{r-2}}^G>\delta_{i_{r-1}}^G$. In particular, because $X_J$ is arbitrary, this implies that there are no indices $r-3\leq p<q<s \leq m'-1$ such that $\delta_p^G<\delta_q^G>\delta_s^G$. That is, the sequence $\{\delta_i^G\}_{i=r-3}^{m'-1}$ has no local maximum.

Now the same argument as in Proposition \ref{prop:findcomb} gives that there exists an interval $M''=\{y_p,\ldots,y_q\}\subseteq M'$ such that $\{\delta_i^G\}_{i=p}^{q-1}$ is monotone and $|M''|\geq (|M'|-r+6)/2$. By the definition given in Example \ref{ex:1comb}, it follows that $M''$ is a $1$-comb.

\vspace{0,2cm}

\noindent \underline{Case 2:} $t\geq r-1$.

In this case $X_J$ is a left comb of type 4 for every $X_J\in G$, since $|X_J|=|P|+|K_J|=r+t\geq 2r-1$. Suppose without loss of generality that $G$ is monochromatic of color $0$ and that $\phi_t(\{\delta_{m'}^G,\ldots,\delta_{m'+t-1}^G\})=0$. Let $u=a(\min(M'),\max(M'))$. Fact \ref{fact:closedanc} applied to $M'$ gives us that $\delta(z,y_{m'+1})=\delta(u,y_{m'+1})$ for every $z\in M'$. In particular, this implies that $\delta(z,y_{m'+1})=\delta_{m'}^G$ for every $z\in M'$.

Write $P=\{y_{i_1},\ldots,y_{i_r}\}\subseteq M'$ with $1\leq i_1<\ldots<i_r\leq m'$ and $X_J=P\cup K_J=\{y_{i_1},\ldots,y_{i_r},y_{m'},\ldots,y_{m'+t-1}\}$. Since $\chi_0(X_J)=0$, $\delta(y_{i_j},y_{m'+1})=\delta_{m'}^G$ for every $1\leq j\leq r$ and $\phi_t(\{\delta_{m'}^G,\ldots,\delta_{m'+t-1}^G\})+1=1$ we obtain by the definition of $\chi_0$ for combs of type $4$ that the inequality
$\delta_{i_{r-3}}^G<\delta_{i_{r-2}}^G>\delta_{i_{r-1}}^G$ cannot hold. Because $X_J$ is arbitrary, we have that there are no indices $r-3\leq p<q<s \leq m'-1$ such that $\delta_p^G<\delta_q^G>\delta_s^G$. Hence, similarly as in Case $1$ we find an interval $M''\subseteq M'$ of size at least $(|M'|-r+6)/2$ such that $M''$ is a $1$-comb in $V'$.
\end{proof}

To finish the proof of Theorem \ref{thm:daisyramsey} we are going to show now that if $G$ is monochromatic with respect to $\chi_0$, then there exists a monochromatic simple $(r-1,\frac{1}{5}k^{-1/2}m^{1/2},j)$-daisy in $\delta(G)\subseteq [N]$ with respect to some coloring $\phi_{j+r-1}$. The proof is split into several cases depending on the structure of $H'$ given by Lemma \ref{lem:preprocessing} and on the possible types of $X_J$.

\vspace{0.2cm}

\noindent \underline{Case 1:} $M'$ is a closed interval in $V'$.

As usual, we may assume that an edge of $G$ is either a broken comb or a left comb. By Proposition \ref{prop: findcombdaisy}, there exists an interval $M''\subseteq M'$ of size $h=(|M'|-r+6)/2$ such that $M''$ is a $1$-comb. Consider the coloring $\chi_0$ over the monochromatic subdaisy $G':=G[K_J\cup M'']\subseteq G$. As in the proof of Proposition \ref{prop: findcombdaisy}, we have that either $X_J$ is a broken comb and $t=|K_J|=0$ or $X_J$ is an $\ell$-left comb with $\ell\leq r$ and $M'<K_J$. Write $M''=\{y_{i_1},\ldots,y_{i_h}\}$ with $1\leq i_1<\ldots<i_h<m'$, $K_J=\{y_{m'+1},\ldots,y_{m'+t}\}$ (if $K_J\neq \emptyset$) and $\delta_i^G=\delta(y_i,y_{i+1})$. 

\begin{figure}[h]
\centering
{\hfil \begin{tikzpicture}[scale=0.6]
    
 	\draw (-8,4.75)[green1] ellipse (0.4 and 1.1);
 	\draw (-8,1) ellipse (0.7 and 1.2);
 	\draw (-8,4.75)[red] ellipse (0.7 and 2.5);
 	
 	\coordinate (T1) at (-5,7.5);
 	\coordinate (T2) at (-4.33,7.5);
 	\coordinate (T3) at (-3.66,7.5);
 	\coordinate (T4) at (-3,7.5);
 	\coordinate (T5) at (-2.33,7.5);
 	\coordinate (T6) at (-1.66,7.5);
 	\coordinate (T7) at (-1,7.5);
 	\coordinate (T8) at (-0.33,7.5);
 	\coordinate (T9) at (0.33,7.5);
 	\coordinate (T10) at (1,7.5);
 	\coordinate (T11) at (1.66,7.5);
 	\coordinate (T12) at (2.33,7.5);
 	\coordinate (T13) at (3,7.5);
 	\coordinate (T14) at (3.66,7.5);
 	\coordinate (T15) at (4.33,7.5);
 	\coordinate (T16) at (5,7.5);

    \coordinate (R16) at (0,0);
    \coordinate (R15) at (-0.33,0.5);
    \coordinate (R14) at (-0.66,1);
    \coordinate (R13) at (-1,1.5);
    \coordinate (R12) at (-1.33,2);
    \coordinate (R1) at (-1.66,2.5);
    \coordinate (R2) at (-1.33,3);
    \coordinate (R3) at (-1,3.5);
    \coordinate (R4) at (-0.66,4);
    \coordinate (R5) at (-0.33,4.5);
    \coordinate (R6) at (0,5);
    \coordinate (R7) at (0.33,5.5);
    \coordinate (R8) at (0.66,6);
    \coordinate (R9) at (1,6.5);
    \coordinate (R10) at (1.33,7);

    \coordinate (L15) at (-8,7.5);
    \coordinate (L14) at (-8,7);
    \coordinate (L13) at (-8,6.5);
    \coordinate (L12) at (-8,6);
    \coordinate (L11) at (-8,5.5);
    \coordinate (L10) at (-8,5);
    \coordinate (L9) at (-8,4.5);
    \coordinate (L8) at (-8,4);
    \coordinate (L7) at (-8,3.5);
    \coordinate (L6) at (-8,3);
    \coordinate (L5) at (-8,2.5);
    \coordinate (L4) at (-8,2);
    \coordinate (L3) at (-8,1.5);
    \coordinate (L2) at (-8,1);
    \coordinate (L1) at (-8,0.5);
    \coordinate (L0) at (-8,0);
    
    \node (p) at (-2,8.5) [above,font=\small] {$P$};
	\node (p') at (-9,5) [left,font=\small] {$\delta(P)$};
	\node (m') at (-8.2,7) [above left,font=\small] {$\delta(M'')$};
	\node (k') at (-9,0.5) [left,font=\small] {$K_D$};
	\node (m) at (-1.7,9.8) [above,font=\small] {$M''$};
	\node (k) at (3.7,8.5) [above,font=\small] {$K_J$};
    
    \draw [decorate,
    decoration = {brace, amplitude=8pt}] (-2.9,8) --  (-1.1,8);
    \draw [decorate,
    decoration = {brace, amplitude=8pt}] (-4.9,9.3) --  (1.5,9.3);
    \draw [decorate,
    decoration = {brace, amplitude=8pt}] (2.4,8) --  (5,8);
    
    \draw[line width=0.5] (R15)--(R16)--(T16);
    \draw[line width=0.5] (R14)--(R15)--(T15);
    \draw[line width=0.5] (R13)--(R14)--(T14);
    \draw[line width=0.5] (R12)--(R13)--(T13);
    \draw[line width=0.5] (R12)--(T12);
    \draw[line width=0.5] (R1)--(R12);
    \draw[line width=0.5][red] (T1)--(R1)--(R2);
    \draw[line width=0.5][red] (T2)--(R2)--(R3);
    \draw[line width=0.5][red] (T3)--(R3)--(R4);
    \draw[line width=0.5][green1] (T4)--(R4)--(R5);
    \draw[line width=0.5][green1] (T5)--(R5)--(R6);
    \draw[line width=0.5][green1] (T6)--(R6)--(R7);
    \draw[line width=0.5][green1] (T7)--(R7);
    \draw[line width=0.5][red] (R7)--(R8);
    \draw[line width=0.5][red] (T8)--(R8)--(R9);
    \draw[line width=0.5][red] (T9)--(R9)--(R10);
    \draw[line width=0.5][red] (T10)--(R10)--(T11);
    \draw[line width=0.5] (L0)--(L15);
    
    \draw[line width=0.1][dashed] (L8)--(R4);
    \draw[line width=0.1][dashed] (L9)--(R5);
    \draw[line width=0.1][dashed] (L10)--(R6);
    \draw[line width=0.1][dashed] (L11)--(R7);
    
    \draw[fill][green1] (T4) circle [radius=0.1];
 	\draw[fill][green1] (T5) circle [radius=0.1];
 	\draw[fill][green1] (T6) circle [radius=0.1];
 	\draw[fill][green1] (T7) circle [radius=0.1];
 	\draw[fill][green1] (R4) circle [radius=0.1];
 	\draw[fill][green1] (R5) circle [radius=0.1];
 	\draw[fill][green1] (R6) circle [radius=0.1];
 	\draw[fill][green1] (R7) circle [radius=0.1];
 	\draw[fill][green1] (L8) circle [radius=0.1];
 	\draw[fill][green1] (L9) circle [radius=0.1];
 	\draw[fill][green1] (L10) circle [radius=0.1];
 	\draw[fill][green1] (L11) circle [radius=0.1];

    \end{tikzpicture}\hfil}
    \caption{Case 1 of Theorem \ref{th:daisystepup}.}
    \label{fig:case1}
\end{figure}
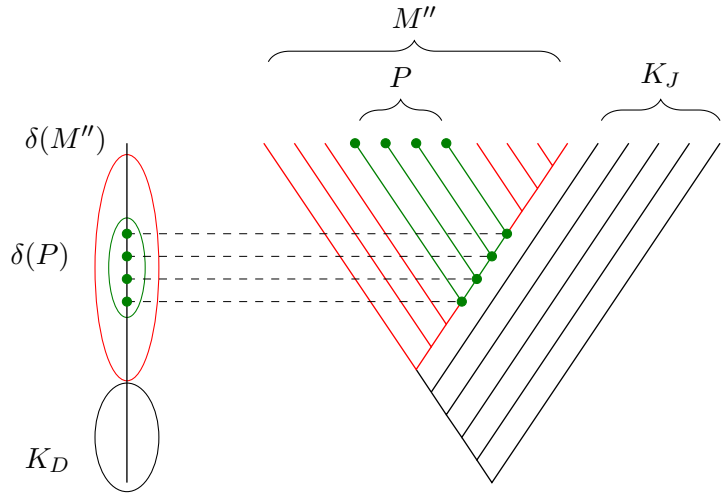

Let $X_J=P\cup K_J=\{x_1,\ldots,x_r\}\cup \{y_{m'+1},\ldots,y_{m'+t}\}$ be an arbitrary edge from $G'$ with $P\subseteq (M'')^{(r)}$. Note that since $M''$ is a $1$-comb, then $\delta(x_{r-3},x_{r-2})$, $\delta(x_{r-2},x_{r-1})$, $\delta(x_{r-1},x_r)$ forms a monotone sequence. Moreover, as discussed in Proposition \ref{prop: findcombdaisy}, the comb $X_J$ is of type $1$, $3$ or $4$. Thus, by the coloring defined in Subsection \ref{subsec:variant}, we have $\chi_0(X_J)=\phi_{r+t-1}(\delta(X_J))$, i.e., the color of $X_J$ is determined by its full projection on the the levels $[N]$.

Let $u=a(\min(M'),\max(M'))$. Note that since $M'$ is closed, by Fact \ref{fact:closedanc} we have that $a(x_r,y_{m'+1})=a(u,y_{m'+1})=a(y_{m'},y_{m'+1})$. Consequently, we have that $\delta(x_r,y_{m'+1})=\delta^G_{m'}$, which implies that $\delta(X_J)=\{\delta(x_1,x_2),\ldots,\delta(x_{r-1},x_r)\}\cup \{\delta_{m'}^G,\ldots,\delta_{m'+t-1}^G\}$. Therefore the projection of all the edges of $X_J$ forms a simple $(r-1,h-1,t)$-daisy $D\subseteq [N]$ with universe of petals $\delta(M'')$ an kernel $K_D=\{\delta_{m'}^G,\ldots,\delta_{m'+t-1}^G\}$ satisfying $K_D<\delta(M'')$. By the fact that $G'$ is monochromatic with respect to $\chi_0$, we have that $D$ is a monochromatic simple $(r-1,h-1,t)$-daisy with respect to the coloring $\phi_{r+t-1}$. This leads to a contradiction since $h-1\geq (m'-r+4)/2\geq \frac{1}{5}k^{-1/2}m^{1/2}$ for $m\geq 100kr^2$ and $\phi_{r+t-1}$ has no monochromatic simple $(r-1,\frac{1}{5}k^{-1/2}m^{1/2},t)$-daisy.  

\vspace{0.2cm}

\noindent \underline{Case 2:} There exists a maximal comb $I=A\cup B$ in $V(H')$ such that $M'\subseteq B$

We may assume without loss of generality that $I=A\cup B$ is a left comb. Write $A=\{y_1,\ldots,y_{\ell}\}$, $B_0=B\cap K_0=\{y_{\ell+1},\ldots,y_{\ell+p}\}$, $M'=\{y_{\ell+p+1},\ldots,y_{\ell+p+m'}\}$ and $B_1=B\cap K_1=\{y_{\ell+p+m'+1},\ldots,y_{\ell+p+m'+t}\}$. By Proposition \ref{prop:uniquecomb}, $V(G)=I$ and all the edges $X_J=A^{X_J}\cup B^{X_J} \in G$ are maximal left comb of same type with $A^{X_J}=A$, $B^{X_J}=B_0\cup P\cup B_1$ and $A<B_0<P<B_1$. In particular, this implies that $|B^{X_J}|\geq r$ and $X_J$ is of type $3$, $4$ or $5$. We split the cases depending on the type of $X_J$. Let $\delta_i^G=\delta(y_i,y_{i+1})$ for $1\leq i \leq \ell+p+m'+t-1$. For an arbitrary edge $X_J\in G$, write $X_J=\{x_1,\ldots,x_{\ell+p+r+t}\}$ with $x_i=y_i$ for $1\leq i \leq \ell+p$, $P=\{x_{\ell+p+1},\ldots,x_{\ell+p+r}\}\subseteq M'$ and $x_{\ell+p+r+i}=y_{\ell+p+m'+i}$ for $1\leq i \leq t$ and let $\delta_i^{X_J}=\delta(x_i,x_{i+1})$ for $1\leq i \leq \ell+p+r+t-1$.

\begin{figure}[h]
\centering
{\hfil \begin{tikzpicture}[scale=0.4]
    
 	\draw (-8,9.5) ellipse (1.66 and 0.3);
 	\draw[red] (0.33,9.5) ellipse (3.7 and 0.5);
 	
 	\coordinate (T0) at (-9.66,9.5);
 	\coordinate (T1) at (-6.33,9.5);
 	\coordinate (T2) at (-5.66,9.5);
 	\coordinate (T3) at (-5,9.5);
 	\coordinate (T4) at (-4.33,9.5);
 	\coordinate (T5) at (-3.66,9.5);
 	\coordinate (T6) at (-3,9.5);
 	\coordinate (T7) at (-2.33,9.5);
 	\coordinate (T8) at (-1.66,9.5);
 	\coordinate (T9) at (-1,9.5);
 	\coordinate (T10) at (-0.33,9.5);
 	\coordinate (T11) at (0.33,9.5);
 	\coordinate (T12) at (1,9.5);
 	\coordinate (T13) at (1.66,9.5);
 	\coordinate (T14) at (2.33,9.5);
 	\coordinate (T15) at (3,9.5);
 	\coordinate (T16) at (3.66,9.5);
 	\coordinate (T17) at (4.33,9.5);
 	\coordinate (T18) at (5,9.5);
 	\coordinate (T19) at (5.66,9.5);
 	\coordinate (T20) at (6.33,9.5);
    \coordinate (R1) at (-8,7);
    \coordinate (R2) at (-7.66,6.5);
    \coordinate (R3) at (-7.33,6);
    \coordinate (R4) at (-7,5.5);
    \coordinate (R5) at (-6.66,5);
    \coordinate (R6) at (-6.33,4.5);
    \coordinate (R7) at (-6,4);
    \coordinate (R8) at (-5.66,3.5);
    \coordinate (R9) at (-5.33,3);
    \coordinate (R10) at (-5,2.5);
    \coordinate (R11) at (-4.66,2);
    \coordinate (R12) at (-4.33,1.5);
    \coordinate (R13) at (-4,1);
    \coordinate (R14) at (-3.66,0.5);
    \coordinate (R15) at (-3.33,0);
    \coordinate (R16) at (-3,-0.5);
    \coordinate (R17) at (-2.66,-1);
    \coordinate (R18) at (-2.33,-1.5);
    \coordinate (R19) at (-2,-2);
    \coordinate (R20) at (-1.66,-2.5);

    \node (a) at (-7.95,10.7) [above,font=\small] {$A$};
	\node (b0) at (-4.6,10.7) [above,font=\small] {$B_0$};
	\node (m) at (0.2,10.7) [above,font=\small] {$M'$};
	\node (b1) at (5.3,10.7) [above,font=\small] {$B_1$};

    \draw [decorate,
    decoration = {brace, amplitude=7pt}] (-9.5,10.2) --  (-6.4,10.2);
    \draw [decorate,
    decoration = {brace, amplitude=7pt}] (-5.5,10.2) --  (-3.7,10.2);
    \draw [decorate,
    decoration = {brace, amplitude=7pt}] (-2.9,10.2) --  (3.5,10.2);
    \draw [decorate,
    decoration = {brace, amplitude=7pt}] (4.4,10.2) --  (6.2,10.2);
    
    \draw[line width=0.5] (T0)--(R1)--(T1);
    \draw[line width=0.5] (R1)--(R2)--(T2);
    \draw[line width=0.5] (R2)--(R3)--(T3);
    \draw[line width=0.5] (R3)--(R4)--(T4);
    \draw[line width=0.5] (R4)--(R5)--(T5);
    \draw[line width=0.5][red] (R5)--(R6)--(T6);
    \draw[line width=0.5][red] (R6)--(R7)--(T7);
    \draw[line width=0.5][red] (R7)--(R8)--(T8);
    \draw[line width=0.5][red] (R8)--(R9)--(T9);
    \draw[line width=0.5][red] (R9)--(R10)--(T10);
    \draw[line width=0.5][red] (R10)--(R11)--(T11);
    \draw[line width=0.5][red] (R11)--(R12)--(T12);
    \draw[line width=0.5][red] (R12)--(R13)--(T13);
    \draw[line width=0.5][red] (R13)--(R14)--(T14);
    \draw[line width=0.5][red] (R14)--(R15)--(T15);
    \draw[line width=0.5][red] (R15)--(R16)--(T16);
    \draw[line width=0.5] (R16)--(R17)--(T17);
    \draw[line width=0.5] (R17)--(R18)--(T18);
    \draw[line width=0.5] (R18)--(R19)--(T19);
    \draw[line width=0.5] (R19)--(R20)--(T20);
 	
    \end{tikzpicture}\hfil}
    \caption{Auxiliary tree of $G$ in Case $2$.}
    \label{fig:treeG}
\end{figure}
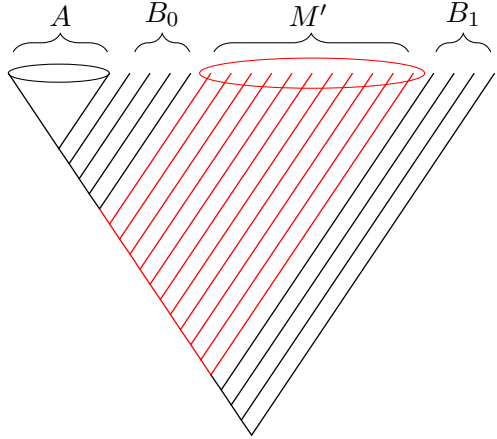

\vspace{0.2cm}

\noindent \underline{Case 2.1:} $X_J$ is of type $3$.

Recall that if $X_J$ is of type $3$, then $|A^{X_J}|\leq r$ and $r\leq |X_J|=|A^{X_J}|+|B^{X_J}|\leq 2r-2$. Because $|B^{X_J}|\geq r$, we obtain that $|A^{X_J}|\leq r-2$. This implies that $\{x_{r-1},x_{r}\}\subseteq B^{X_J}$ and consequently $\delta_{r-3}^{X_J}>\delta_{r-2}^{X_J}>\delta_{r-1}^{X_J}$. Therefore, by the fact that $|\delta(X_J)|\geq |\delta(\{x_\ell,\ldots,x_{\ell+p+r+t}\})|=p+r+t\geq r$, we obtain that $\chi_0(X_J)=\phi_{|\delta(X_J)|}(\delta(X_J))$.

Note that
\begin{align*}
    \delta(X_J)&=\{\delta_1^G, \ldots, \delta_{\ell+p-1}^G\}\cup \{\delta_{\ell+p}^{X_J},\ldots,\delta_{\ell+p+r-1}^{X_J}\}\cup\{\delta_{\ell+p+m'}^G,\ldots,\delta_{\ell+p+m'+t-1}^G\}\\
    &=\delta(A\cup B_0)\cup \{\delta_{\ell+p}^{X_J},\ldots,\delta_{\ell+p+r-1}^{X_J}\}\cup\delta(\{y_{\ell+p+m'}\}\cup B_1).
\end{align*}
Hence, the projection of the edges of $G$ is a simple $(r,m',|\delta(A\cup B_0)|+t)$-daisy with kernel $\delta(A\cup B_0)\cup \delta(\{y_{\ell+p+m'}\}\cup B_1)$. Since $G$ is monochromatic with respect to $\chi_0$, we obtain that $\delta(G)\subseteq [N]$ is monochromatic with respect to the projection coloring, which is a contradiction because any simple $(r,m',|\delta(A\cup B_0)|+t)$-daisy contains a simple $(r-1,m'-1,|\delta(A\cup B_0)|+t+1)$-subdaisy and $m'-1\geq \frac{1}{5}k^{-1/2}m^{1/2}$.

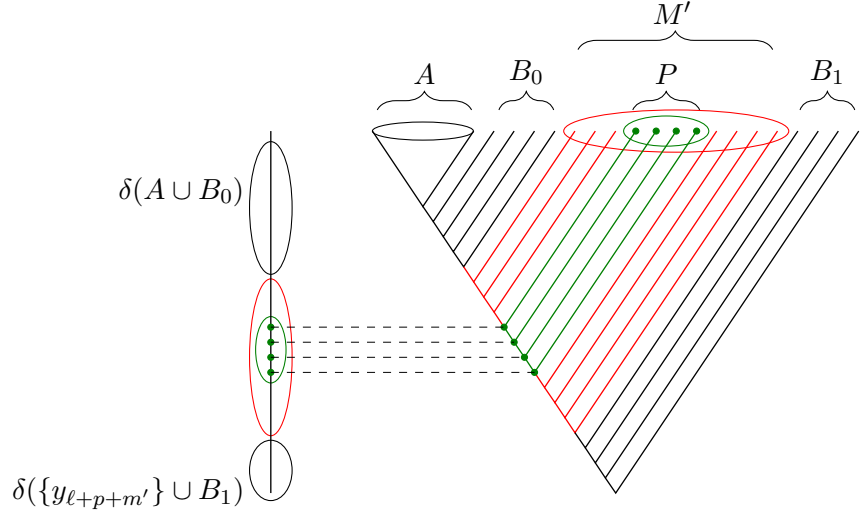
\begin{figure}[h]
\centering
{\hfil \begin{tikzpicture}[scale=0.4]
    
 	\draw (-8,9.5) ellipse (1.66 and 0.3);
 	\draw[red] (0.33,9.5) ellipse (3.7 and 0.7);
 	\draw[green1] (0,9.5) ellipse (1.4 and 0.5);
 	\draw[green1] (-13,2.25) ellipse (0.5 and 1.1);
 	\draw[red] (-13,2) ellipse (0.7 and 2.6);
 	\draw (-13,6.95) ellipse (0.7 and 2.2);
 	\draw (-13,-1.75) ellipse (0.7 and 1);
 	
 	\coordinate (T0) at (-9.66,9.5);
 	\coordinate (T1) at (-6.33,9.5);
 	\coordinate (T2) at (-5.66,9.5);
 	\coordinate (T3) at (-5,9.5);
 	\coordinate (T4) at (-4.33,9.5);
 	\coordinate (T5) at (-3.66,9.5);
 	\coordinate (T6) at (-3,9.5);
 	\coordinate (T7) at (-2.33,9.5);
 	\coordinate (T8) at (-1.66,9.5);
 	\coordinate (T9) at (-1,9.5);
 	\coordinate (T10) at (-0.33,9.5);
 	\coordinate (T11) at (0.33,9.5);
 	\coordinate (T12) at (1,9.5);
 	\coordinate (T13) at (1.66,9.5);
 	\coordinate (T14) at (2.33,9.5);
 	\coordinate (T15) at (3,9.5);
 	\coordinate (T16) at (3.66,9.5);
 	\coordinate (T17) at (4.33,9.5);
 	\coordinate (T18) at (5,9.5);
 	\coordinate (T19) at (5.66,9.5);
 	\coordinate (T20) at (6.33,9.5);
    \coordinate (R1) at (-8,7);
    \coordinate (R2) at (-7.66,6.5);
    \coordinate (R3) at (-7.33,6);
    \coordinate (R4) at (-7,5.5);
    \coordinate (R5) at (-6.66,5);
    \coordinate (R6) at (-6.33,4.5);
    \coordinate (R7) at (-6,4);
    \coordinate (R8) at (-5.66,3.5);
    \coordinate (R9) at (-5.33,3);
    \coordinate (R10) at (-5,2.5);
    \coordinate (R11) at (-4.66,2);
    \coordinate (R12) at (-4.33,1.5);
    \coordinate (R13) at (-4,1);
    \coordinate (R14) at (-3.66,0.5);
    \coordinate (R15) at (-3.33,0);
    \coordinate (R16) at (-3,-0.5);
    \coordinate (R17) at (-2.66,-1);
    \coordinate (R18) at (-2.33,-1.5);
    \coordinate (R19) at (-2,-2);
    \coordinate (R20) at (-1.66,-2.5);
    
    \coordinate (L) at (-13,-2.5);
    \coordinate (L') at (-13, 9.5);
    \coordinate (L9) at (-13,3);
    \coordinate (L10) at (-13,2.5);
    \coordinate (L11) at (-13,2);
    \coordinate (L12) at (-13,1.5);

    \node (a) at (-7.95,10.7) [above,font=\small] {$A$};
	\node (b0) at (-4.6,10.7) [above,font=\small] {$B_0$};
	\node (m) at (0.2,12.7) [above,font=\small] {$M'$};
	\node (b1) at (5.3,10.7) [above,font=\small] {$B_1$};
	\node (p) at (0,10.7) [above,font=\small] {$P$};
	\node (d0) at (-13.5,-2.5) [left,font=\small] {$\delta(\{y_{\ell+p+m'}\}\cup B_1)$};
	\node (d1) at (-13.5,7.5) [left,font=\small] {$\delta(A\cup B_0)$};

    \draw [decorate,
    decoration = {brace, amplitude=7pt}] (-9.5,10.2) --  (-6.4,10.2);
    \draw [decorate,
    decoration = {brace, amplitude=7pt}] (-5.5,10.2) --  (-3.7,10.2);
    \draw [decorate,
    decoration = {brace, amplitude=7pt}] (-2.9,12.2) --  (3.5,12.2);
    \draw [decorate,
    decoration = {brace, amplitude=7pt}] (4.4,10.2) --  (6.2,10.2);
    \draw [decorate,
    decoration = {brace, amplitude=7pt}] (-1.1,10.2) --  (1.1,10.2);
    
    \draw[line width=0.5] (T0)--(R1)--(T1);
    \draw[line width=0.5] (R1)--(R2)--(T2);
    \draw[line width=0.5] (R2)--(R3)--(T3);
    \draw[line width=0.5] (R3)--(R4)--(T4);
    \draw[line width=0.5] (R4)--(R5)--(T5);
    \draw[line width=0.5][red] (R5)--(R6)--(T6);
    \draw[line width=0.5][red] (R6)--(R7)--(T7);
    \draw[line width=0.5][red] (R7)--(R8)--(T8);
    \draw[line width=0.5][red] (R8)--(R9);
    \draw[line width=0.5][green1] (R9)--(T9);
    \draw[line width=0.5][green1] (R9)--(R10)--(T10);
    \draw[line width=0.5][green1] (R10)--(R11)--(T11);
    \draw[line width=0.5][green1] (R11)--(R12)--(T12);
    \draw[line width=0.5][red] (R12)--(R13)--(T13);
    \draw[line width=0.5][red] (R13)--(R14)--(T14);
    \draw[line width=0.5][red] (R14)--(R15)--(T15);
    \draw[line width=0.5][red] (R15)--(R16)--(T16);
    \draw[line width=0.5] (R16)--(R17)--(T17);
    \draw[line width=0.5] (R17)--(R18)--(T18);
    \draw[line width=0.5] (R18)--(R19)--(T19);
    \draw[line width=0.5] (R19)--(R20)--(T20);
    \draw[line width=0.5] (L)--(L');
 	
 	\draw[fill][green1] (T9) circle [radius=0.1];
 	\draw[fill][green1] (T10) circle [radius=0.1];
 	\draw[fill][green1] (T11) circle [radius=0.1];
 	\draw[fill][green1] (T12) circle [radius=0.1];
 	\draw[fill][green1] (R9) circle [radius=0.1];
 	\draw[fill][green1] (R10) circle [radius=0.1];
 	\draw[fill][green1] (R11) circle [radius=0.1];
 	\draw[fill][green1] (R12) circle [radius=0.1];
 	\draw[fill][green1] (L9) circle [radius=0.1];
 	\draw[fill][green1] (L10) circle [radius=0.1];
 	\draw[fill][green1] (L11) circle [radius=0.1];
 	\draw[fill][green1] (L12) circle [radius=0.1];
 	
 	\draw[line width=0.1][dashed] (L9)--(R9);
    \draw[line width=0.1][dashed] (L10)--(R10);
    \draw[line width=0.1][dashed] (L11)--(R11);
    \draw[line width=0.1][dashed] (L12)--(R12);
 	
    \end{tikzpicture}\hfil}
    \caption{Case $2.1$ of Theorem \ref{th:daisystepup}}
    \label{fig:case21}
\end{figure}

\vspace{0.2cm}

\noindent \underline{Case 2.2:} $X_J$ is of type $4$.

If $X_J$ is of type 4, then $|A^{X_J}|\leq r$ and $|X_J|=|A^{X_J}|+|B^{X_J}|\geq 2r-1$. We split the proof into two subcases depending on the sequence formed by $\{\delta_{r-3}^{X_J},\delta_{r-2}^{X_J},\delta_{r-1}^{X_J}\}$:

\vspace{0.2cm}

\noindent \underline{Case 2.2.a:} Either $\delta_{r-3}^{X_J}>\delta_{r-2}^{X_J}<\delta_{r-1}^{X_J}$ or $\delta_{r-3}^{X_J}<\delta_{r-2}^{X_J}>\delta_{r-1}^{X_J}$.

Suppose without loss of generality that $\delta_{r-3}^{X_J}>\delta_{r-2}^{X_J}<\delta_{r-1}^{X_J}$. Hence, by the coloring definition, we have $\chi_0(X_J)=\phi_{\ell+p+t}(\{\delta_r^{X_J},\ldots,\delta_{\ell+p+r+t-1}^{X_J}\})$. Thus, we just need to look at the projection $\{\delta_r^{X_J},\ldots,\delta_{\ell+p+r+t-1}^{X_J}\}$ for every $X_J \in G$. Note that $\delta_{r-3}^{X_J}>\delta_{r-2}^{X_J}<\delta_{r-1}^{X_J}$ implies that $|A^{X_J}|\geq r-1$. Indeed, by the same argument made in Case 2.1, if $|A^{X_J}|\leq r-2$, then $\delta_{r-3}^{X_J}>\delta_{r-2}^{X_J}>\delta_{r-1}^{X_J}$, which is a contradiction. So, it follows that $r-1\leq |A^{X_J}|=\ell\leq r$.

\begin{figure}[h]
\centering
{\hfil \begin{tikzpicture}[scale=0.4]
    
 	\draw (-7,9.5) ellipse (2.66 and 0.5);
 	\draw[red] (0,9.5) ellipse (4 and 0.7);
 	\draw[green1] (0,9.5) ellipse (1.4 and 0.5);
 	\draw[green1] (-13,2.25) ellipse (0.5 and 1.1);
 	\draw[red] (-13,2) ellipse (0.7 and 2.6);
 	\draw (-13,-1.75) ellipse (0.7 and 1);
 	
 	\coordinate (T0) at (-9.66,9.5);
 	\coordinate (T1) at (-6.33,9.5);
 	\coordinate (T2) at (-5.66,9.5);
 	\coordinate (T3) at (-5,9.5);
 	\coordinate (T4) at (-4.33,9.5);
 	\coordinate (T5) at (-3.66,9.5);
 	\coordinate (T6) at (-3,9.5);
 	\coordinate (T7) at (-2.33,9.5);
 	\coordinate (T8) at (-1.66,9.5);
 	\coordinate (T9) at (-1,9.5);
 	\coordinate (T10) at (-0.33,9.5);
 	\coordinate (T11) at (0.33,9.5);
 	\coordinate (T12) at (1,9.5);
 	\coordinate (T13) at (1.66,9.5);
 	\coordinate (T14) at (2.33,9.5);
 	\coordinate (T15) at (3,9.5);
 	\coordinate (T16) at (3.66,9.5);
 	\coordinate (T17) at (4.33,9.5);
 	\coordinate (T18) at (5,9.5);
 	\coordinate (T19) at (5.66,9.5);
 	\coordinate (T20) at (6.33,9.5);
    \coordinate (R1) at (-8,7);
    \coordinate (R2) at (-7.66,6.5);
    \coordinate (R3) at (-7.33,6);
    \coordinate (R4) at (-7,5.5);
    \coordinate (R5) at (-6.66,5);
    \coordinate (R6) at (-6.33,4.5);
    \coordinate (R7) at (-6,4);
    \coordinate (R8) at (-5.66,3.5);
    \coordinate (R9) at (-5.33,3);
    \coordinate (R10) at (-5,2.5);
    \coordinate (R11) at (-4.66,2);
    \coordinate (R12) at (-4.33,1.5);
    \coordinate (R13) at (-4,1);
    \coordinate (R14) at (-3.66,0.5);
    \coordinate (R15) at (-3.33,0);
    \coordinate (R16) at (-3,-0.5);
    \coordinate (R17) at (-2.66,-1);
    \coordinate (R18) at (-2.33,-1.5);
    \coordinate (R19) at (-2,-2);
    \coordinate (R20) at (-1.66,-2.5);
    
    \coordinate (L) at (-13,-2.5);
    \coordinate (L') at (-13, 9.5);
    \coordinate (L9) at (-13,3);
    \coordinate (L10) at (-13,2.5);
    \coordinate (L11) at (-13,2);
    \coordinate (L12) at (-13,1.5);

    \node (a) at (-9.66,9.5) [above left,font=\small] {$A$};
	\node (r) at (-6.5,10.8) [above,font=\small] {$r$};
	\node (m) at (0.2,12.7) [above,font=\small] {$M'$};
	\node (b1) at (5.3,10.7) [above,font=\small] {$B_1$};
	\node (p) at (0,10.7) [above,font=\small] {$P$};
	\node (m') at (-13.5,4.6) [left,font=\small] {$\delta(M')$};
	\node (d) at (-13.5,-2.5) [left,font=\small] {$\{\delta_{r+m'-1}^G,\ldots,\delta_{r+m'+t-2}^G\}$};
	\node (p) at (-13.5,1.1) [left,font=\small] {$\delta(P)$};

    \draw [decorate,
    decoration = {brace, amplitude=7pt}] (-9.5,10.2) --  (-3.6,10.2);
    \draw [decorate,
    decoration = {brace, amplitude=7pt}] (-2.9,12.2) --  (3.5,12.2);
    \draw [decorate,
    decoration = {brace, amplitude=7pt}] (4.4,10.2) --  (6.2,10.2);
    \draw [decorate,
    decoration = {brace, amplitude=7pt}] (-1.1,10.2) --  (1.1,10.2);
    
    \draw[line width=0.5] (T0)--(R1);
    \draw[line width=0.5] (R1)--(R2);
    \draw[line width=0.5] (R2)--(R3);
    \draw[line width=0.5] (R3)--(R4)--(T4);
    \draw[line width=0.5] (R4)--(R5);
    \draw[line width=0.5][red] (T5)--(R5)--(R6)--(T6);
    \draw[line width=0.5][red] (R6)--(R7)--(T7);
    \draw[line width=0.5][red] (R7)--(R8)--(T8);
    \draw[line width=0.5][red] (R8)--(R9);
    \draw[line width=0.5][green1] (R9)--(T9);
    \draw[line width=0.5][green1] (R9)--(R10)--(T10);
    \draw[line width=0.5][green1] (R10)--(R11)--(T11);
    \draw[line width=0.5][green1] (R11)--(R12)--(T12);
    \draw[line width=0.5][red] (R12)--(R13)--(T13);
    \draw[line width=0.5][red] (R13)--(R14)--(T14);
    \draw[line width=0.5][red] (R14)--(R15)--(T15);
    \draw[line width=0.5][red] (R15)--(R16)--(T16);
    \draw[line width=0.5] (R16)--(R17)--(T17);
    \draw[line width=0.5] (R17)--(R18)--(T18);
    \draw[line width=0.5] (R18)--(R19)--(T19);
    \draw[line width=0.5] (R19)--(R20)--(T20);
    \draw[line width=0.5] (L)--(L');
 	
 	\draw[fill][green1] (T9) circle [radius=0.1];
 	\draw[fill][green1] (T10) circle [radius=0.1];
 	\draw[fill][green1] (T11) circle [radius=0.1];
 	\draw[fill][green1] (T12) circle [radius=0.1];
 	\draw[fill][green1] (R9) circle [radius=0.1];
 	\draw[fill][green1] (R10) circle [radius=0.1];
 	\draw[fill][green1] (R11) circle [radius=0.1];
 	\draw[fill][green1] (R12) circle [radius=0.1];
 	\draw[fill][green1] (L9) circle [radius=0.1];
 	\draw[fill][green1] (L10) circle [radius=0.1];
 	\draw[fill][green1] (L11) circle [radius=0.1];
 	\draw[fill][green1] (L12) circle [radius=0.1];
 	
 	\draw[line width=0.1][dashed] (L9)--(R9);
    \draw[line width=0.1][dashed] (L10)--(R10);
    \draw[line width=0.1][dashed] (L11)--(R11);
    \draw[line width=0.1][dashed] (L12)--(R12);
 	
    \end{tikzpicture}\hfil}
    \caption{Case $2.2$ of Theorem \ref{th:daisystepup} when $|A|=r-1$.}
    \label{fig:case22}
\end{figure}
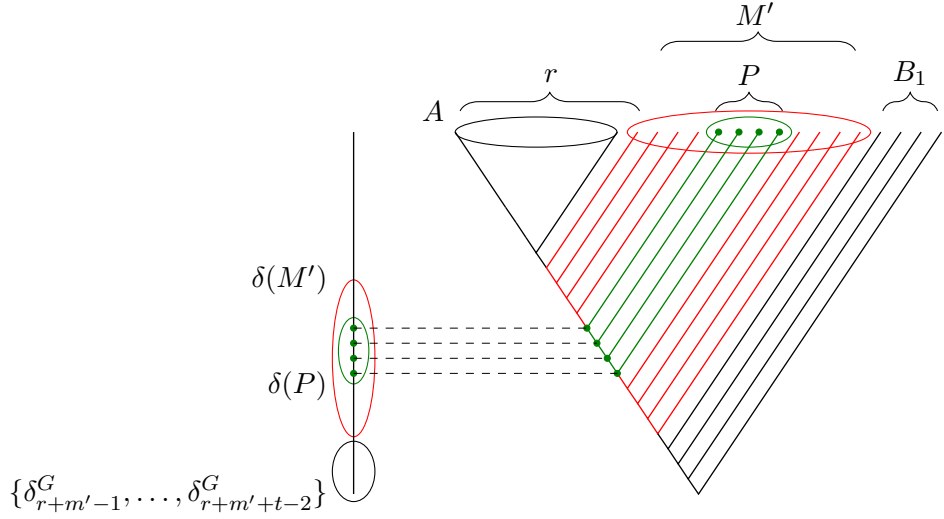

Suppose that $|A^{X_J}|=r-1$ and $B_0=\emptyset$, i.e., $\ell=r-1$, $p=0$ and $M'=\{y_r,\ldots,y_{r+m'-1}\}$. Then the projection of the relevant part of an edge $X_J$ can be written as
\begin{align*}
    \{\delta_r^{X_J},\ldots,\delta_{\ell+p+r+t-1}^{X_J}\}&=\{\delta_r^{X_J}. \ldots, \delta_{2r-2}^{X_J}\}\cup \{\delta_{r+m'-1}^G,\ldots,\delta_{r+m'+t-2}^G\}\\
    &=\delta(P)\cup \{\delta_{r+m'-1}^G,\ldots,\delta_{r+m'+t-2}^G\},
\end{align*}
since $\delta_{2r-1+i}^{X_J}=\delta_{\ell+p+r+i}^{X_J}=\delta_{\ell+p+m'+i}^G=\delta_{2r+m'-1+i}^G$ for $0\leq i \leq t-1$. Therefore, the projection of the edges $X_J$ is a simple $(r-1,m'-1,t)$-daisy with kernel $\{\delta_{r+m'-1}^G,\ldots,\delta_{r+m'+t-2}^G\}$. Because $G$ is monochromatic under $\chi_0$, the projection is also monochromatic under $\phi_{r+t-1}$, which is a contradiction.

Now suppose that $|A^{X_J}\cup B_0|=\ell+p\geq r$. The relevant projection of $X_J$ in this case would be
\begin{align*}
    \{\delta_r^{X_J},\ldots,\delta_{\ell+p+r+t-1}^{X_J}\}=\{\delta_r^G,\ldots, \delta_{\ell+p-1}^G\}&\cup\{\delta_{\ell+p}^{X_J},\ldots,\delta_{\ell+p+r-1}^{X_J}\}\\
    &\cup\{\delta_{\ell+p+m'}^G,\ldots,\delta_{\ell+p+m'+t-1}^G\},
\end{align*}
where the set $\{\delta_r^G,\ldots,\delta_{\ell+p-1}^G\}$ is empty for $\ell+p=r$. Since $\{\delta_{\ell+p}^{X_J},\ldots,\delta_{\ell+p+r-1}^{X_J}\}=\delta(\{y_{\ell+p}\}\cup P)$, we obtain that the projection of all edges $X_J$ is a simple $(r,m', \ell+p+t-r)$-daisy with kernel $\{\delta_r^G,\ldots,\delta_{\ell+p-1}^G\} \cup \{\delta_{\ell+p+m'}^G,\ldots,\delta_{\ell+p+m'+t-1}^G\}$. Because every simple $(r,m',\ell+p+t-r)$-daisy contains an $(r-1,m'-1,\ell+p+t-r+1)$-daisy and the projection is monochromatic with respect to $\phi_{\ell+p+t-1}$, we obtain a monochromatic simple $(r-1,\frac{1}{5}k^{-1/2}m^{1/2},\ell+p+t-r+1)$-daisy, which is a contradiction.
 
\vspace{0.2cm}

\noindent \underline{Case 2.2.b:} Either  $\delta_{r-3}^{X_J}<\delta_{r-2}^{X_J}<\delta_{r-1}^{X_J}$ or $\delta_{r-3}^{X_J}>\delta_{r-2}^{X_J}>\delta_{r-1}^{X_J}$.

In this case we obtain that $\chi_0(X_J)=\phi_{|\delta(X_J)|}(\delta(X_J))$, i.e., the coloring of $\chi_0$ is just the coloring of the projection of $X_J$. The proof now follows similarly as in Case 2.1.

\vspace{0.2cm}

\noindent \underline{Case 2.3:} $X_J$ is of type $5$.

\begin{figure}[h]
\centering
{\hfil \begin{tikzpicture}[scale=0.4]
    
 	\draw (-8,9.5) ellipse (1.66 and 0.3);
 	\draw[red] (0.33,9.5) ellipse (3.7 and 0.7);
 	\draw[green1] (0,9.5) ellipse (1.4 and 0.5);
 	\draw[green1] (-13,2.25) ellipse (0.5 and 1.1);
 	\draw[red] (-13,2) ellipse (0.7 and 2.6);
 	\draw (-13,5.65) ellipse (0.7 and 0.9);
 	\draw (-13,-1.75) ellipse (0.7 and 1);
 	
 	\coordinate (T0) at (-9.66,9.5);
 	\coordinate (T1) at (-6.33,9.5);
 	\coordinate (T2) at (-5.66,9.5);
 	\coordinate (T3) at (-5,9.5);
 	\coordinate (T4) at (-4.33,9.5);
 	\coordinate (T5) at (-3.66,9.5);
 	\coordinate (T6) at (-3,9.5);
 	\coordinate (T7) at (-2.33,9.5);
 	\coordinate (T8) at (-1.66,9.5);
 	\coordinate (T9) at (-1,9.5);
 	\coordinate (T10) at (-0.33,9.5);
 	\coordinate (T11) at (0.33,9.5);
 	\coordinate (T12) at (1,9.5);
 	\coordinate (T13) at (1.66,9.5);
 	\coordinate (T14) at (2.33,9.5);
 	\coordinate (T15) at (3,9.5);
 	\coordinate (T16) at (3.66,9.5);
 	\coordinate (T17) at (4.33,9.5);
 	\coordinate (T18) at (5,9.5);
 	\coordinate (T19) at (5.66,9.5);
 	\coordinate (T20) at (6.33,9.5);
    \coordinate (R1) at (-8,7);
    \coordinate (R2) at (-7.66,6.5);
    \coordinate (R3) at (-7.33,6);
    \coordinate (R4) at (-7,5.5);
    \coordinate (R5) at (-6.66,5);
    \coordinate (R6) at (-6.33,4.5);
    \coordinate (R7) at (-6,4);
    \coordinate (R8) at (-5.66,3.5);
    \coordinate (R9) at (-5.33,3);
    \coordinate (R10) at (-5,2.5);
    \coordinate (R11) at (-4.66,2);
    \coordinate (R12) at (-4.33,1.5);
    \coordinate (R13) at (-4,1);
    \coordinate (R14) at (-3.66,0.5);
    \coordinate (R15) at (-3.33,0);
    \coordinate (R16) at (-3,-0.5);
    \coordinate (R17) at (-2.66,-1);
    \coordinate (R18) at (-2.33,-1.5);
    \coordinate (R19) at (-2,-2);
    \coordinate (R20) at (-1.66,-2.5);
    
    \coordinate (L) at (-13,-2.5);
    \coordinate (L') at (-13, 9.5);
    \coordinate (L9) at (-13,3);
    \coordinate (L10) at (-13,2.5);
    \coordinate (L11) at (-13,2);
    \coordinate (L12) at (-13,1.5);

    \node (a) at (-7.95,10.7) [above,font=\small] {$A$};
	\node (b0) at (-4.6,10.7) [above,font=\small] {$B_0$};
	\node (m) at (0.2,12.7) [above,font=\small] {$M'$};
	\node (b1) at (5.3,10.7) [above,font=\small] {$B_1$};
	\node (p) at (0,10.7) [above,font=\small] {$P$};
	\node (d0) at (-13.5,5.7) [left,font=\small] {$\{\delta_{\ell}^G,\ldots, \delta_{\ell+p-1}^G\}$};
	\node (d1) at (-13.5,-2) [left,font=\small] {$\{\delta_{\ell+p+m'}^G,\ldots, \delta_{\ell+p+m'+t-1}^G\}$};
	\node (p') at (-13.5,2.7) [left,font=\small] {$\delta(\{y_{\ell+p}\}\cup P)$};
	\node (m') at (-13.5,0) [left,font=\small] {$\delta(\{y_{\ell+p}\}\cup M')$};

    \draw [decorate,
    decoration = {brace, amplitude=7pt}] (-9.5,10.2) --  (-6.4,10.2);
    \draw [decorate,
    decoration = {brace, amplitude=7pt}] (-5.5,10.2) --  (-3.7,10.2);
    \draw [decorate,
    decoration = {brace, amplitude=7pt}] (-2.9,12.2) --  (3.5,12.2);
    \draw [decorate,
    decoration = {brace, amplitude=7pt}] (4.4,10.2) --  (6.2,10.2);
    \draw [decorate,
    decoration = {brace, amplitude=7pt}] (-1.1,10.2) --  (1.1,10.2);
    
    \draw[line width=0.5] (T0)--(R1)--(T1);
    \draw[line width=0.5] (R1)--(R2)--(T2);
    \draw[line width=0.5] (R2)--(R3)--(T3);
    \draw[line width=0.5] (R3)--(R4)--(T4);
    \draw[line width=0.5] (R4)--(R5)--(T5);
    \draw[line width=0.5][red] (R5)--(R6)--(T6);
    \draw[line width=0.5][red] (R6)--(R7)--(T7);
    \draw[line width=0.5][red] (R7)--(R8)--(T8);
    \draw[line width=0.5][red] (R8)--(R9);
    \draw[line width=0.5][green1] (R9)--(T9);
    \draw[line width=0.5][green1] (R9)--(R10)--(T10);
    \draw[line width=0.5][green1] (R10)--(R11)--(T11);
    \draw[line width=0.5][green1] (R11)--(R12)--(T12);
    \draw[line width=0.5][red] (R12)--(R13)--(T13);
    \draw[line width=0.5][red] (R13)--(R14)--(T14);
    \draw[line width=0.5][red] (R14)--(R15)--(T15);
    \draw[line width=0.5][red] (R15)--(R16)--(T16);
    \draw[line width=0.5] (R16)--(R17)--(T17);
    \draw[line width=0.5] (R17)--(R18)--(T18);
    \draw[line width=0.5] (R18)--(R19)--(T19);
    \draw[line width=0.5] (R19)--(R20)--(T20);
    \draw[line width=0.5] (L)--(L');
 	
 	\draw[fill][green1] (T9) circle [radius=0.1];
 	\draw[fill][green1] (T10) circle [radius=0.1];
 	\draw[fill][green1] (T11) circle [radius=0.1];
 	\draw[fill][green1] (T12) circle [radius=0.1];
 	\draw[fill][green1] (R9) circle [radius=0.1];
 	\draw[fill][green1] (R10) circle [radius=0.1];
 	\draw[fill][green1] (R11) circle [radius=0.1];
 	\draw[fill][green1] (R12) circle [radius=0.1];
 	\draw[fill][green1] (L9) circle [radius=0.1];
 	\draw[fill][green1] (L10) circle [radius=0.1];
 	\draw[fill][green1] (L11) circle [radius=0.1];
 	\draw[fill][green1] (L12) circle [radius=0.1];
 	
 	\draw[line width=0.1][dashed] (L9)--(R9);
    \draw[line width=0.1][dashed] (L10)--(R10);
    \draw[line width=0.1][dashed] (L11)--(R11);
    \draw[line width=0.1][dashed] (L12)--(R12);
 	
    \end{tikzpicture}\hfil}
    \caption{Case $2.3$ of Theorem \ref{th:daisystepup}}
    \label{fig:case23}
\end{figure}
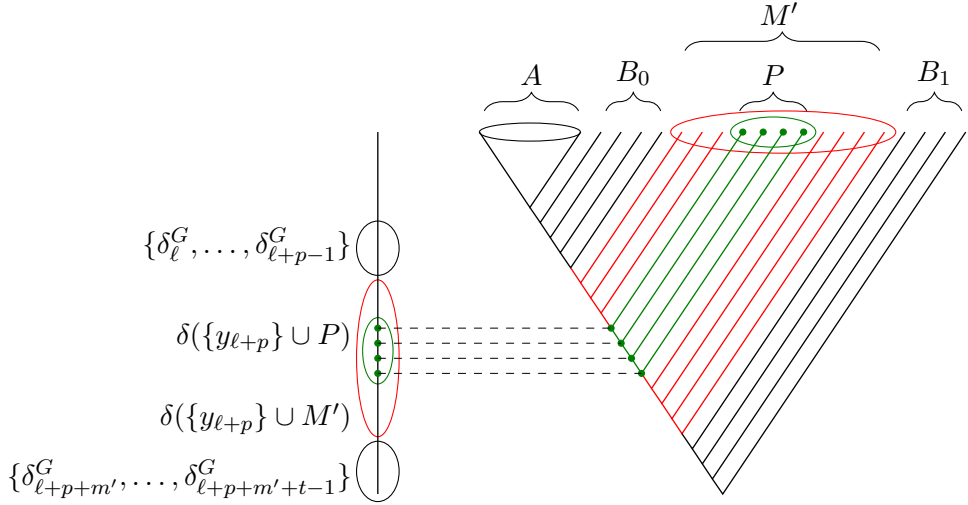

If $X_J$ is of type $5$, then $|A^{X_J}|>r$ and $|B^{X_J}|=p+r+t\geq r$. By the coloring definition, we have $\chi_0(X_J)=\phi_{p+r+t}(\{\delta_{\ell}^{X_J},\ldots,\delta_{\ell+p+r+t-1}^{X_J}\})$. The projection here can be rewritten as
\begin{align*}
    \{\delta_{\ell}^{X_J},\ldots,\delta_{\ell+p+r+t-1}^{X_J}\}=\{\delta_{\ell}^G,\ldots,\delta_{\ell+p-1}^G\}&\cup \{\delta(\{y_{\ell+p}\}\cup P)
    \\&\cup \{\delta_{\ell+p+m'}^G,\ldots, \delta_{\ell+p+m'+t-1}^G\}.
\end{align*}

Thus, the relevant projection over all edges $X_J$ is a simple $(r,m',p+t)$-daisy with kernel $\{\delta_{\ell}^G,\ldots,\delta_{\ell+p-1}^G\}\cup \{\delta_{\ell+p+m'}^G,\ldots, \delta_{\ell+p+m'+t-1}^G\}$. Therefore, by the same argument did in the previous cases, we reach a contradiction since there is no monochromatic simple $(r-1,\frac{1}{5}k^{-1/2}m^{1/2},p+t+1)$-daisy in the coloring $\phi_{p+r+t}$.
\end{proof}

\begin{proof}[Proof of Theorem \ref{thm:daisyramsey}]
We will prove by induction on the size of $r$ that there exists an absolute positive constants $c$ and $c'$ not depending on $k$ and $r$ such that
\begin{align*}
    D_r^{\smp}(m,k)=t_{r-2}(c'(5\sqrt{k})^{2^{5-r}-4}m^{2^{4-r}})\geq t_{r-2}(ck^{2^{4-r}-2}m^{2^{4-r}})
\end{align*} 
holds for $k\geq 1$ and $m\geq (25k)^{2^{r}-1}$. For $r=3$, the result follows by the next proposition given in \cite{PRS}.

\begin{proposition}[\cite{PRS}, Proposition 1.2]\label{prop:r=3}
There exists a positive constant $c'$ not depending on $k$ such that
\begin{align*}
    D_3(m,k)\geq 2^{c'm^2}
\end{align*}
holds for $m>3$.
\end{proposition}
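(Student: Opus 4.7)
The plan is to establish Proposition \ref{prop:r=3} by a standard probabilistic construction. I would color each $(k+3)$-subset of $[N]$ independently and uniformly at random from $\{0,1\}$ and bound the expected number of monochromatic $(3,m,k)$-daisies using linearity of expectation. A $(3,m,k)$-daisy in $[N]$ is specified by a disjoint pair $(K,M)$ with $|K|=k$ and $|M|=m$, so there are at most $\binom{N}{k}\binom{N-k}{m}\leq N^{k+m}$ such daisies, and each one has exactly $\binom{m}{3}$ edges; consequently a given daisy is monochromatic with probability $2^{1-\binom{m}{3}}$.

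The expected number of monochromatic daisies is therefore at most $N^{k+m}\cdot 2^{1-\binom{m}{3}}$. Setting $N=\lfloor 2^{c'm^{2}}\rfloor$ turns this into $2^{c'm^{2}(k+m)+1-\binom{m}{3}}$, which is strictly less than $1$ whenever $c'<(\binom{m}{3}-1)/(m^{2}(k+m))$, an expression of order $m/(k+m)$. In the regime $m\geq k$, one may take $c'=1/25$ (say); a uniformly random coloring then has positive probability of containing no monochromatic $(3,m,k)$-daisy, and so $D_{3}(m,k)>2^{c'm^{2}}$.

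The main obstacle is that Proposition \ref{prop:r=3} demands $c'$ independent of $k$ across all values $m>3$, while the plain random construction loses this independence once $m$ is small compared to $k$. One remedies this by combining the probabilistic argument with the trivial bound $D_{3}(m,k)\geq k+m$: the trivial bound alone already dominates $2^{c'm^{2}}$ whenever $c'm^{2}\leq\log_{2}(k+m)$, covering precisely the regime in which the random construction degrades. Together, the two arguments can be made to cover all $m>3$ with a single absolute constant $c'$. For the inductive use within the proof of Theorem \ref{thm:daisyramsey} it is enough to have the bound for $m$ sufficiently large relative to $k$, since the threshold $m_{0}(r,k)$ is permitted to depend on $k$; so the clean probabilistic bound suffices to launch the induction, which then proceeds by feeding $m'=\tfrac{1}{5}k^{-1/2}m^{1/2}$ into Theorem \ref{th:daisystepup} and tracking the constant $c'(5\sqrt{k})^{2^{5-r}-4}$ through one layer of the tower at each step.
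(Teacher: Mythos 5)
Your random-coloring calculation is correct, but the claim that the trivial bound $D_3(m,k)\geq k+m$ plugs the remaining hole is not. The random construction requires $c'm^2(k+m)+1\leq\binom{m}{3}$, i.e.\ roughly $c'\lesssim m/(k+m)$, which fails as soon as $k$ grows faster than $m$. The trivial bound, on the other hand, only gives $D_3(m,k)\geq 2^{c'm^2}$ once $k+m\geq 2^{c'm^2}$, that is, once $k$ is \emph{exponentially} large in $m^2$. Between these two thresholds --- for instance $m$ large and $k$ anywhere between $m^2$ and $2^{c'm^2/2}$ --- neither argument delivers the bound with a $k$-independent constant, so the hybrid does not establish the proposition as stated for all $m>3$.

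You are right, though, that in this paper the proposition is only ever invoked with $m$ much larger than $k$ (the base case of the induction in Theorem~\ref{thm:daisyramsey} has $m\geq(25k)^{7}$), and in that regime your random construction alone works with an absolute constant. The paper does not prove the proposition itself --- it cites \cite{PRS} --- so there is no internal argument to compare against; but if you want to recover the cited statement in full rather than the weaker version actually used, you would need either a more structured coloring that does not lose a factor of order $k/m$ in the exponent, or to restate the proposition with an explicit restriction such as $m\geq k$.
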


Now suppose that $r\geq 4$ and that for any integer $\ell<r$ the induction hypothesis is satisfied, i.e.,
\begin{align*}
    D_{\ell}^{\smp} (m,k)\geq t_{\ell-2}(c'(5\sqrt{k})^{2^{5-\ell}-4}m^{2^{4-\ell}})
\end{align*}
for $m\geq (25k)^{2^{\ell}-1}$ and $k\geq 1$. Let $N=\min_{0\leq i \leq k-1}{D_{r-1}^{\smp}(\frac{1}{5}k^{-1/2}m^{1/2},i)}$. For $i=0$, by equation (\ref{eq:ramsey}) we have that
\begin{align*}
    D_{r-1}^{\smp}\left(\frac{1}{5}k^{-1/2}m^{1/2},0\right)\geq R_{r-1}\left(\frac{1}{5}k^{-1/2}m^{1/2}\right)\geq t_{r-2}(c_1k^{-1}m)
\end{align*}
for a positive constant $c_1$. Since $m\geq (25k)^{2^r-1}$, we obtain that $\frac{1}{5}k^{-1/2}m^{1/2}\geq (25k)^{2^{r-1}-1}$. Thus, by induction hypothesis we also have that
\begin{align*}
    D_{r-1}^{\smp}\left(\frac{1}{5}k^{-1/2}m^{1/2},i\right)\geq t_{r-3}\left(c'(5\sqrt{i})^{2^{6-r}-4}\left(\frac{1}{5}k^{-1/2}m^{1/2}\right)^{2^{5-r}}\right),
\end{align*}
for $i\geq 1$. Therefore,
\begin{align*}
N &\geq \min\left\{t_{r-2}(c_1k^{-1}m),\min_{1\leq i \leq k}\left\{t_{r-3}\left(c'(5\sqrt{i})^{2^{6-r}-4}\left(\frac{1}{5}k^{-1/2}m^{1/2}\right)^{2^{5-r}}\right)\right\}\right\}\\
&\geq  t_{r-3}(c'(5\sqrt{k})^{2^{5-r}-4}m^{2^{4-r}}).
\end{align*}
Finally, Theorem \ref{th:daisystepup}, applied to $m\geq (25k)^{2^r-1}\geq 100kr^2$, gives us that
\begin{align*}
    D_r^{\smp}(m,k)\geq 2^N\geq t^{r-2}(c'(5\sqrt{k})^{2^{5-r}-4}m^{2^{4-r}}).
\end{align*}
\end{proof}

\section*{Acknowledgments}
The author thanks Pavel Pudl\'{a}k and Vojtech R\"{o}dl for their comments on earlier versions of the manuscript.

\bibliography{literature}

\end{document}